\newtheorem{prop}{Proposition}
\newtheorem{cor}{Corollary}
\newtheorem{remark}{Remark}
\newcommand{\tensor}{{\bf \Lambda}}    
\newcommand{\tensorDEn}{\Lambda_{\Du,\Eu}} 
\newcommand{\tensorDE}{\Lambda_{\Du,\Eu}} 
\newcommand{\Qt}{Q_T}    
\newcommand{\ant}{\text{int}}    
\newcommand{\ext}{\text{ext}}  
\newcommand{\diam}{\text{diam}}
\newcommand{\R}{\mathbb{R}}
\newcommand{\n}{\textbf{n}}
\newcommand{\h}{h}                 
\newcommand{\dt}{\delta t}                 
\newcommand{\W}{{\bf w}}                        
\newcommand{\cc}{\mathfrak{C}}                        
\newcommand{\cd}{\mathfrak{D}}
\newcommand{\C}{\mathcal{C}}                        
\newcommand{\cte}{\mathit{C}}
\newcommand{\K}{K}
\newcommand{\tend}{\longrightarrow}
\newcommand{\KT}{\kappa_\T}  
\newcommand{\Du}{D}                        
\newcommand{\hdt}{{\dt,\h}}                        
\newcommand{\Eu}{E}                        
\newcommand{\abs}[1]{\ensuremath{\left|#1\right|}}
\newcommand{\dm}{d}
\newcommand{\V}{\textbf{V}}           
\newcommand{\U}{\textbf{U}}              
\newcommand{\ga}{\gamma}                  
\newcommand{\pc}{\mathcal{P}_c}
\newcommand{\F}{\mathcal{F}} 
\newcommand{\X}{X} 
\newcommand{\va}{\varphi} 
\newcommand{\E}{\mathcal{E}}
\newcommand{\Q}{Q}
\newcommand{\Kr}{\delta_{\Du\Eu}}
\newcommand{\kr}{\delta}
\newcommand{\D}{\mathcal{D}}
\newcommand{\T}{\mathcal{T}}
\newcommand{\B}{\mathcal{B}}
\newcommand{\norm}[1]{\ensuremath{\left\|#1\right\|}}
\newcommand{\di}{\mathrm{div}}
\newcommand{\dd}{\mathrm{d}}
\newcommand{\gas}{g} 
\newcommand{\liq}{l}
\newcommand{\pas}{\delta t}
\newcommand{\DE}{{\Du,\Eu}}
\newcommand{\Ne}{\mathcal N}
\newcommand{\desn}{\delta_{D|E}} % sans n
\newcommand{\de}{\delta_{D|E}^{n}}
\newcommand{\dis}{d_{\Du,\Eu}}
\newcommand{\sig}{\sigma_{\Du,\Eu}}
\newcommand{\tokl}{\tau_{\Du|\Eu}}
\newcommand{\sn}{\sum_{n=1}^{N}}
\newcommand{\sk}{\sum_{K \in \mathcal{T}_\h}}
\newcommand{\snDE}{\sn \dt \sum_{\Du \in \D_\h} \sum_{\Eu \in \D_\h}}
\newcommand{\sm}{\sn\delta t\sum_{\Du \in \D_\h} \sum_{\Eu \in \Ne(\Du)}}
\newcommand{\slmin}{s_{l,min}^{n}}
\newcommand{\sli}{(s^I_{l,\Du})^{n}}
\newcommand{\sgi}{(s^I_{g,\Du})^{n}}
\title{A combined finite volume--nonconforming finite element scheme for compressible two phase flow in porous media}
\author{Bilal Saad\thanks{King Abdullah University of Science and Technology \newline 
	     Applied Mathematics and Computational Science \newline
	     Thuwal 23955-6900, Kingdom of Saudi Arabia.({\tt bilal.saad@kaust.edu.sa}).}
\and {Mazen Saad \thanks{
         Ecole Centrale de Nantes,
         D\'epartement d' Informatique et Math\'ematiques,
         Laboratoire de Math\'ematiques Jean Leray (UMR 6629 CNRS),
         1, rue de la No\'e, BP 92101, France, ({\tt Mazen.Saad@ec-nantes.fr}).} }}
\begin{document}

\maketitle

\begin{abstract} 
We propose and analyze a combined finite volume--nonconforming 
finite element scheme on general meshes to simulate the two compressible phase 
flow in porous media. The diffusion term, which can be anisotropic and heterogeneous, 
is discretized by piecewise linear nonconforming triangular finite elements.  
The other terms are discretized by means of a cell-centered finite volume scheme on a dual mesh, where the dual volumes are constructed around the sides of the original mesh.
The relative permeability of each phase is decentred according the sign of the velocity 
at the dual interface. 
This technique also ensures the validity of the discrete maximum principle 
for the saturation under a non restrictive shape regularity of the space mesh and the positiveness of all transmissibilities.  
Next, a  priori estimates on the pressures and a function of the
saturation that denote capillary terms are established.  These stabilities results lead to some compactness arguments based on the use of the Kolmogorov compactness theorem, and allow us to derive 
the convergence of a subsequence  of the sequence of approximate solutions 
to a weak solution of the continuous equations, 
provided the mesh size tends to zero. The proof is given for the complete
system when the density of the each phase depends on the own pressure.

\end{abstract}

\begin{keywords} 
finite volume scheme, Finite element method, degenerate system, two compressible fluids
\end{keywords}

\pagestyle{myheadings}
\thispagestyle{plain}
\markboth{B. SAAD AND M. SAAD}{COMBINED FINITE VOLUME NONCONFORMING FINITE ELEMENT } %FOR COMPRESSIBLE TWO PHASES}

\section{Introduction}\label{sec:introduction}

The simultaneous flow of immiscible fluids in porous media occurs in a 
wide variety of applications. A large variety of methods have been proposed 
for the discretization of degenerate parabolic systems modeling the 
displacement of immiscible incompressible two-phase flows in porous media. 
We refer  to \cite{aziz} and \cite{peaceman} for  the finite difference method. 
The finite volume methods have been proved to be well adapted to 
discretize conservative equations.
The cell-centered finite volume scheme has been studied e.g.
 by \cite{michel2003}, \cite{Eymard00} and \cite{brenier}. 
 Recently, the convergence analysis of a finite volume scheme
for a degenerate compressible and immiscible flow in porous media 
 has been studied by Bendahmane et al.  \cite{ZS11fv} 
 when the densities of each phase depend on the global pressure, 
 and by B. Saad and M. Saad \cite{saadsaadFV} for the complete 
 system when the density of the each phase depends on the own pressure. In these works, the medium is considered homogeneous, the permeability tensor is  proportional to the matrix identity and the mesh is supposed to be admissible in the sense that satisfying the orthogonal property as in  \cite{Eymard:book}.
 The cell-centered finite volume method with an upwind discretization 
 of the convection term ensures the stability and
 is extremely robust and have been used in industry because
 they are cheap, simple to code and robust.
 However, standard finite volume schemes do not permit to handle anisotropic diffusion 
 on general meshes see e.g.  \cite{Eymard:book}. 

Various multi-point schemes where the approximation of the flux 
through an edge involves several scalar unknowns have been proposed, 
see e.g. Coudi\`ere et al. \cite{Coudiere-al}, Eymard et al. \cite{Eymard-2004}, 
or Faille \cite{Faille}.  However, such schemes require using more points than the 
classical 4 points for triangular meshes and 5 points for quadrangular meshes in 
space dimension two, making the schemes less robust. 
%Their extension to three-dimensional unstructured meshes is
%also not straightforward (with the exception of the scheme proposed in \cite{Eymard-2004}).
%

%
On the other hand finite element method allows a very 
simple discretization of the diffusion term with a full tensor 
and does not impose any restrictions on the meshes,
they were used a lot for the discretization of  a degenerate 
parabolic problems modeling of contaminant transport in porous media.
The mixed finite element method by Dawson \cite{Dawson1}, 
the conforming piecewise linear finite element method has been studied 
e.g. by Barrett and Knabner \cite{Barrett--Knabner}, Chen and Ewing \cite{Chen-Ewing}, 
Nochetto et al. \cite{Nochetto-al}, and Rulla et al. \cite{Rulla-Walkington}. 
However, it is well-known that numerical instabilities may 
arise in the convection-dominated case. 
To avoid these instabilities, the theoretical analysis
of the combined finite volume--finite element method has been carried out for
the case of a degenerate parabolic problems with a full diffusion tensors. 
%The idea is to take "best of both worlds", which leads to combined
%finite volume-finite element schemes. 
The combined finite volume--conforming finite element method
proposed and studied by Debiez et al. \cite{Debiez-al} or Feistauer et al. \cite{Feistauer-al} 
for fluid mechanics equations, are indeed quite efficient. 

This ideas is extended by  \cite{Eymard-Hilhorst-Vohralik} for the degenerate parabolic problems, 
to the combination of the mixed-hybrid finite element and finite volume methods, to
inhomogeneous and anisotropic diffusion--dispersion tensors, to space dimension
three, and finally to meshes only satisfying the shape regularity condition.
In order to solve this class of equations, Eymard et al. 
\cite{Eymard-Hilhorst-Vohralik} discretize the diffusion term by means of piecewise linear 
nonconforming (Crouzeix--Raviart) finite elements over a triangularization
of the space domain, or using the stiffness matrix of the hybridization of the lowest order
Raviart--Thomas mixed finite element method. The other terms are discretized by means
of a finite volume scheme on a dual mesh, with an upwind discretization of the convection term 
to ensures the stability, where the dual volumes are constructed around
the sides of the original triangularization. 
The intention of this paper is to extend these ideas to a fully nonlinear 
degenerate parabolic system modeling immiscible gas-water displacement in porous media 
without simplified assumptions on the state law of the density of each phase, 
to the combination of the nonconforming finite element and finite volume methods, to
inhomogeneous and anisotropic permeability tensors, to space dimension
three, and finally to meshes only satisfying the shape regularity condition.

Following \cite{Eymard-Hilhorst-Vohralik}, let us now introduce the combined scheme that we analyze in this paper. We
consider a triangulation of the space domain consisting of simplices (triangles in
space dimension two and tetrahedra in space dimension three). We next construct
a dual mesh where the dual volumes are associated with the sides (edges or faces).
To construct a dual volume, one connects the barycentres of two neighboring simplices
through the vertices of their common side.We finally place the unknowns in
the barycentres of the sides.  The diffusion term, which can be anisotropic and heterogeneous, 
is discretized by piecewise linear nonconforming triangular finite elements.  
The other terms are discretized by means of a cell-centered finite 
volume scheme on a dual mesh,
where the dual volumes are constructed around the sides 
of the original mesh, hence we obtain the combined scheme. To ensures the stability,
the relative permeability of each phase is decentred according the sign of the velocity 
at the dual interface. This technique ensures the validity of the discrete maximum principle 
for the saturation  in the case where all transmissibilities are non-negative.

This paper deals with construction and convergence analysis of a combined finite volume--nonconforming finite element 
for two compressible and immiscible flow in porous media without 
simplified assumptions on the state law of the density of each phase.
%
%The model is treated in its general form under the physical assumption that the density of each phase depends on its own pressure.
 The analysis of this model is based on new energy estimates on the velocities of each phase. Nevertheless, 
these estimates are degenerate in the sense that they do
not permit the control of gradients of pressure of each phase, especially when a phase is not locally present in the domain. The main idea consists to
derive from degenerate estimates on pressure of each phase, 
which not allowed straight bound on pressures, an estimate on global pressure 
and degenerate capillary term in the whole domain regardless of the presence or the disappearance of the phases.
These stabilities results with a priori estimates on the pressures and a function of the
saturation that denote capillary terms and some compactness arguments based on
the use of the Kolmogorov relative compactness theorem, allow us to derive 
the convergence of both these approximations to a weak solution of the continuous
problem in this paper provided the mesh size tends to zero. 

The organization of this paper is as follows. 
In section \ref{sec:mathematical_formulation}, we introduce the nonlinear 
parabolic system modeling the two--compressible and immiscible fluids in a 
porous media and we state the assumptions on the data and present a weak formulation of the continuous problem. In section \ref{sec:combined}, we describe 
the combined finite volume--nonconforming finite element scheme 
and we present the main theorem of convergence. 
In section \ref{sec:fundamental_lemma}, we derive three preliminary 
fundamental lemmas. In fact, we present some properties of this scheme 
and we will see that we can't control the discrete gradient 
of pressure since  the mobility of each phase vanishes
in the region where the phase is missing. 
So we are going to use  the feature of global
pressure. We show that the control of velocities ensures the 
control of the global pressure and a dissipative term on saturation in the 
whole domain regardless of the presence or the disappearance of the phases.
Section \ref{sec:basic-apriori} is devoted to a maximum principle on 
saturation, a priori estimates on the discrete velocities  and existence of discrete solutions.
In section
\ref{sec:compacity}, we derive estimates on difference of
time and space translates for the approximate solutions. In section
\ref{sec:limite} using
the Kolmogorov relative compactness theorem, we prove the convergence of
a subsequence of the sequence of approximate solutions to a weak solution of the
continuous problem.

\section{Mathematical formulation of the continuous problem}\label{sec:mathematical_formulation}

Let us state the physical model describing the immiscible displacement
of two compressible fluids in porous media. We consider the flow of two 
immiscible fluids in a porous medium. We focus on the water and gas phase, 
but the considerations below are also valid for a general wetting phase
and a non-wetting phase. 
%For presentation simplicity, we restrict the theoretical demonstration to a horizontal field, i.e. 
%we neglect the gravity effect. 

The mathematical model is given by the
mass balance equation and Darcy's law for both phases $\alpha = \liq, \gas$. 
Let $T>0$ be the final time fixed, and let be $\Omega$ a bounded open subset of
$\R^\dm\ (\dm\geq1)$. We set $Q_T=(0,T)\times \Omega$, $\Sigma_T =
(0,T)\times \partial \Omega$. The mass conservation of each phase is given in $Q_T$ 
\begin{equation}\label{eq:principal}
  \phi(x)\partial_{t}( \rho_{\alpha}(p_\alpha)s_\alpha)(t,x) + \di (\rho_{\alpha}(p_\alpha) \V_{\alpha})(t,x)
  +\rho_\alpha(p_\alpha) s_\alpha  f_{P}^{~}(t,x) = \rho_\alpha(p_\alpha) s^I_\alpha f_{I}^{~}(t,x),
\end{equation}
where $\phi$, $\rho_\alpha$ and $s_\alpha$ are respectively the
porosity of the medium, the density of the $\alpha$ phase and the
saturation of the $\alpha$ phase. Here the functions $f_{I}^{~}$ and
$f_{P}^{~}$ are respectively the injection and production terms. Note
that in equation \eqref{eq:principal} the injection term is
multiplied by a known saturation $s^I_\alpha$ corresponding to the
known injected fluid, whereas the production term is multiplied by the
unknown saturation $s_\alpha$
corresponding to the produced fluid.\\
%%%%%%%%%%%%%%%%%%%%%%%%%%%%%%%%%%%%%%%%%%%%%%%%%%%%%%
The velocity of each fluid $\V_\alpha$ is given by the Darcy law:
\begin{equation}
  \V_{\alpha}= - {\bf \Lambda}
  \frac{k_{r_\alpha}(s_\alpha)}{\mu_{\alpha}}\big( \nabla
  p_{\alpha}-\rho_\alpha(p_\alpha){\bf g}\big),\qquad\quad \alpha
  = \liq, \gas.
\end{equation}
where ${\bf \Lambda}$ is the permeability tensor of the porous medium, $k_{r_\alpha}$ the
relative permeability of the $\alpha$ phase, $\mu_\alpha$ the constant
$\alpha$-phase's viscosity, $p_\alpha$ the $\alpha$-phase's pressure
and ${\bf g }$ is the gravity term.
Assuming that the phases occupy the whole pore space, the phase
saturations satisfy
\begin{equation}\label{def:saturation}
  s_{\liq}+ s_{\gas} = 1.
\end{equation}
The curvature of the contact surface between the two fluids links the
jump of pressure of the two phases to the saturation by the capillary
pressure law in order to close the system
\eqref{eq:principal}-\eqref{def:saturation}
\begin{equation}\label{def:pression_capillaire.}
  p_c(s_\liq(t,x)) = p_{\gas}(t,x) - p_{\liq}(t,x).
\end{equation}
With the arbitrary choice of \eqref{def:pression_capillaire.} (the jump
of pressure is a function of $s_\liq$), the application $s_\liq\mapsto
p_c(s_\liq)$ is non-increasing, $(\frac{\dd p_c}{\dd s_\liq}(s_\liq) < 0,
\mbox{ for all } s_\liq \in [0,1])$, and usually $p_c(s_\liq=1)=0$
 when the wetting fluid is at its maximum
saturation. 

%%%%%%%%%%%%%%%%%
%%%%%%%%%%%%%%%%%
\subsection{Assumptions and main result}
%%%%%%%%%%%%%%%%%%%%%%%%%%%%%%%%%%
%We discretize the diffusion term, which generally 
%involves an inhomogeneous and anisotropic diffusion 
%tensor, over an unstructures simplicial mesh of the 
%space domain by means of the piecewise linear
%nonconforming (Crouzeix-Raviart) finite element  method. 

The model is treated without simplified assumptions on the density of
each phase, we consider that the density of each phase depends on its
corresponding pressure. The main point is to handle a priori estimates
on the approximate solution. The studied system represents two kinds
of degeneracy: the degeneracy for evolution terms
$\partial_t(\rho_\alpha s_\alpha)$ and the degeneracy for dissipative
terms $\di(\rho_\alpha M_\alpha\nabla p_\alpha)$ when the saturation
vanishes. We will see in the section \ref{sec:basic-apriori} that we
can't control the discrete gradient of pressure since the mobility of
each phase vanishes in the region where the phase is missing. So, we
are going to use the feature of global pressure to obtain uniform
estimates on the discrete gradient of the global pressure and the
discrete gradient of the capillary term ${\mathcal B}$ (defined on
\eqref{def:beta}) to treat the degeneracy of this system.

Let us summarize some useful notations in the sequel. We recall the
conception of the global pressure as describe in \cite{chavent}
$$
M(s_\liq)\nabla p = M_\liq(s_\liq) \nabla p_\liq + M_\gas(s_\gas) \nabla p_\gas,
$$
with the $\alpha$-phase's mobility $M_\alpha$ and the total mobility are defined by 
$$
M_{\alpha}(s_{\alpha})=k_{r_\alpha}(s_{\alpha})/ \mu_{\alpha}, \quad
M(s_\liq) = M_\liq(s_\liq)+M_\gas(s_\gas).
$$
This global pressure $p$ can be written as
\begin{align}\label{def:pression_globale}
  p=p_\gas+\tilde{p}(s_\liq)=p_\liq+\bar{p}(s_\liq),
\end{align}
or the artificial pressures are denoted by $\bar{p}$ and $\tilde{p}$
defined by:
\begin{align}\label{def:terme_capillaire}
  \tilde{p}(s_\liq)=-\int_{0}^{s_\liq} \frac{M_{\liq}(z)}{M(z)}
  p_c^{'}(z)\dd z \text{ and } \overline{p}(s_\liq)=\int_{0}^{s_\liq}
  \frac{M_{\gas}(z)}{M(z)} p_c^{'}(z)\dd z.
\end{align}
We also define the capillary terms by
$$
\ga (s_\liq)=-\frac{M_{\liq}(s_\liq)M_{\gas}(s_\gas)}{M(s_\liq)}\frac{\dd p_c}{\dd
  s_\liq}(s_\liq)\geq 0,
$$
and let us finally define the function $\B$ from $[0,1]$ to $\R$
by:
\begin{align}\label{def:beta}
  \mathcal{B}(s_\liq)&=\int_{0}^{s_\liq}\ga(z) \dd z  =-
  \int_{0}^{s_\liq}\frac{M_\liq(z)M_\gas(z)}{M(z)}\frac{\dd p_c}{\dd
    s_\liq}(z) \dd z \notag\\
     & =- \int_{0}^{s_\gas} M_\liq(z)\frac{\dd
    \bar{p}}{\dd s_\liq}(z) \dd z  = \int_{0}^{s_\liq}
  M_\gas(z)\frac{\dd \tilde{p}}{\dd s_\liq}(z) \dd z.
\end{align}
Using these notations, we derive the fundamental relationship  between the velocities  and the global pressure and the capillary term:
\begin{equation}\label{plpgsbeta}
M_l(s_l)\nabla p_l=M_l(s_l)\nabla p +\nabla \mathcal{B}(s_l),\quad M_g(s_g)\nabla p_g=M_g(s_l)\nabla p -\nabla \mathcal{B}(s_l).
\end{equation} 

As mentioned above that the mobilities vanish and consequently the control of the gradient of the pressure of each phase is not possible. A main point of the paper is to give sense for the term $\nabla p_\alpha$, $\alpha=l,g$. This term is a distribution and it is not enough to give a sense of the velocity. Our approach is based on the control of the velocity of each phase. Thus the gradient of the global pressure and the function $\B$ are bounded which give a rigorous justification for the degenerate problem.

We complete the description of the model \eqref{eq:principal}
by introducing boundary conditions and initial conditions.  To the system
\eqref{eq:principal}--\eqref{def:pression_capillaire.} we add
the following mixed boundary conditions. We consider the boundary
$\partial \Omega=\Gamma_\liq\cup \Gamma_{\emph{imp}}$, where $\Gamma_\liq$
denotes the water injection boundary and $\Gamma_{\emph{imp}}$ the
impervious one.
\begin{equation}\label{cd:bord.}
  \left\{ 
    \begin{aligned}      
      p_\liq(t,x) = p_\gas(t,x)=0 & \text{ on } (0,T)\times\Gamma_\liq,  \\
      \rho_\liq \V_\liq \cdot \textbf{n} = \rho_\gas \V_\gas \cdot \textbf{n}
      = 0 & \text{ on } (0,T)\times\Gamma_{imp},
    \end{aligned} 
  \right.
\end{equation}
where $\textbf{n}$ is the outward normal to $\Gamma_{imp}$.

The initial conditions are defined on pressures
\begin{equation}\label{cd:initial.}
  p_{\alpha}(t=0) =p^{0}_{\alpha} \text{ for } \alpha=\liq,\gas  \text{ in } \Omega.
\end{equation}

%%%%%%%%%%%%%%%%%%%%%%%%%%%%%%%%%%%%

Next we introduce some physically relevant assumptions on the
coefficients of the system.
\begin{enumerate}[({${A}$}1)]
\item \label{hyp:A1} There is two positive constants $\phi_{0}$ 
                     and $\phi_{1}$ such that $\phi_{0}\leq \phi(x) \leq 
                     \phi_{1}$ almost everywhere $x\in \Omega$.
\item \label{hyp:A2} $\tensor_{ij} \in L^{\infty}(\Omega)$, 
                          $\abs{\tensor_{ij}}\le \frac{C_\tensor}{d} 
                          \text{ a.e. in } \Omega$, $1 \le i,j\le d$, $C_\tensor >0$,
                          $\tensor$ is a symmetric and there exist
                          a constant $c_\tensor>0$ such that                       
                          $$
                          \left<\tensor(x) \xi,\xi \right> \geq c_\tensor
                          | \xi |^{2}, \forall \xi \in \R^d.
                          $$
\item \label{hyp:A3} The functions $M_\liq$ and $M_\gas$ belongs to 
                     ${\mathcal C}^{0}([0,1],\R^{+})$, $ M_{\alpha}(s_{\alpha}=0)=0.$ In addition,
  there is a positive constant $m_{0}>0$ such that for all $s_\liq\in
  [0,1]$,
  $$
  M_\liq(s_\liq) + M_\gas(s_\gas)\geq m_{0}.
  $$
\item \label{hyp:A4} $(f_{P}^{~},f_{I}^{~})\in (L^2(Q_T))^2$,
  $f_{P}^{~}(t,x)$, $f_{I}^{~}(t,x) \ge 0$
  almost everywhere $(t,x)\in Q_T$.
\item \label{hyp:A5} The density $\rho_{\alpha}$ is ${\mathcal
    C}^{1}(\R)$, increasing and there exist two positive constants
  $\rho_{m}>0$ and $\rho_{M}>0 $ such that $0<\rho_{m}\leq
  \rho_{\alpha}(p_{\gas})\leq \rho_{M}.$
\item \label{hyp:A6} The capillary pressure fonction $p_c\in
  \mathcal{C}^{1}([0,1];\R^{+})$, decreasing and there exists
  $\underline{p_c}>0$ such that $0<\underline{p_c}\leq |\frac{\dd
    p_c}{\dd s_\liq}|$.
\item \label{hyp:A7} The function $\ga \in C^{1}\left([0,1];\R^{+}
  \right)$ satisfies $\ga(s_\liq)>0$ for $0<s_\liq< 1$ and
  $\ga(s_\liq=1)=\ga(s_\liq=0)=0.$ We assume that $\mathcal{B}^{-1}$ (the
  inverse of $\mathcal{B}(s_\liq)=\int_{0}^{s_\liq}\ \ga(z) \dd z$) is a
  H\"{o}lder\footnote{This means that there exists a positive constant
    $c$ such that for all $a, b \in [0,\mathcal{B}(1)],$ one has
    $|\mathcal{B}^{-1}(a)-\mathcal{B}^{-1}(b)|\leq c|a -
    b|^{\theta}$.}  function of order $\theta$, with $0<\theta\leq 1,
  \text{ on } [0,\mathcal{B}(1)]$.
\end{enumerate} 
The assumptions ({A}\ref{hyp:A1})--({A}\ref{hyp:A7}) are classical for
porous media. Note that, due to the boundedness of the capillary
pressure function, the functions $\tilde{p}$ and $\bar{p}$ defined in
\eqref{def:terme_capillaire} are bounded on $[0,1]$. \\
We now give the definition of a weak solution of the problem \eqref{eq:principal}--\eqref{def:pression_capillaire.}.

\begin{definition} \label{def:weak solution} 
  $\left(\text{Weak solutions} \right).$ Under assumptions 
  ({A}\ref{hyp:A1})-({A}\ref{hyp:A7}) and suppose ($p^{0}_\liq,\ p^{0}_\gas$) belong to  $(L^2(\Omega))^2$ and  $0\le s^0_\alpha(x)\le 1$ almost everywhere in
  $\Omega$, then the pair $\left(p_\liq, p_\gas \right)$ is a weak
  solution of problem \eqref{eq:principal} satisfying :
\begin{align}
  & p_\alpha \in L^2(Q_T),~0\leq s_\alpha(t,x)\leq 1 \text{ a.e in } Q_T, \; (\alpha=\liq,\gas),\\
& p\in L^{2}(0,T;H^{1}(\Omega)),\; \mathcal{B}(s_\liq)\in L^{2}(0,T;H^{1}_{\Gamma_\liq}, (\Omega)), \\
&M_\alpha(s_\alpha)\nabla p_\alpha \in (L^2(Q_T))^\dm, 
\end{align}
such that for all $\varphi,\, \psi \in
\mathbb{C}^1([0,T];H^1_{\Gamma_\liq}(\Omega))\, \text{ with } \,
\varphi(T,\cdot)=\psi(T,\cdot)=0$,
\begin{align}
  & - \int_{Q_T} \phi \rho_\liq(p_\liq) s_\liq \partial_t\varphi\dd x \dd t
    - \int_\Omega \phi (x) \rho_\liq(p_\liq^0(x)) s_\liq^0(x)\varphi(0,x)\dd x \notag\\
  & + \int_{Q_T} \rho_\liq(p_\liq) M_\liq(s_\liq) \tensor \nabla p_\liq \cdot\nabla
  \varphi \dd x \dd t
  -\int_{Q_T}\tensor M_\liq(s_\liq)\rho_\liq^2(p_\liq) {\bf g} \cdot\nabla \varphi \dd x \dd t\label{eq:pl}\\
  &+\int_{Q_T}\rho_\liq(p_\liq)s_\liq f_{P}^{~}\varphi \dd x\dd t\notag =
    \int_{Q_T}\rho_\liq(p_\liq) s^I_\liq f_{I}^{~}\varphi \dd x \dd t,
\end{align}
\begin{align}
  &-\int_{Q_T} \phi \rho_\gas(p_\gas) s_\gas \partial_t\psi\dd x \dd t
  -\int_\Omega \phi (x) \rho_\gas(p_\gas^0(x)) s_\gas^0(x)\psi(0,x)\,dx \notag\\
  &+\int_{Q_T}M_\gas(s_\gas)\rho_\gas(p_\gas) \tensor \nabla p_\gas \cdot\nabla \psi
  \dd x \dd t
  -\int_{Q_T} \tensor M_\gas(s_\gas)\rho_\gas^2(p_\gas){\bf g} \cdot\nabla \psi \dd x \dd t\label{eq:pg}\\
  &+\int_{Q_T}\rho_\gas(p_\gas)s_\gas f_{P}^{~}\psi \dd x \dd t\notag
  =\int_{Q_T}\rho_\gas(p_\gas) s^I_\gas f_{I}^{~}\psi \dd x \dd t.
\end{align}
\end{definition}

%================================================
\section{Combined finite volume--nonconforming finite element scheme} \label{sec:combined}
We will describe the space and time discretizations, 
define the approximation spaces, and introduce the combined 
finite volume--nonconforming finite element scheme in this section.
\subsection{Space and time discretizations} \label{subsec:discretizations}
In order to discretize the problem \eqref{eq:principal},
 we perform a triangulation $\T_h$ of the domain $\Omega$, consisting 
 of closed simplices such that $\overline{\Omega}=\cup_{\K \in \T_h} \K$ 
 and such that if $\K, \L \in \T_h$, $\K\ne \L$, then $\K \cap \L$ is 
 either an empty set or a common face, edge, or vertex of $\K$ and $\L$.
 We denote by $\E_h$ the set of all sides, by $\E_h^{\ant}$ the set of all
 interior sides, by $\E_h^{\ext}$ the set of all exterior sides, and by 
 $\E_\K$ the set of all the sides of an element $\K \in \T_h$. We define 
 $\h := max\{\diam(K),K\in \T_h \}$ and make the following shape regularity 
 assumption on the family of triangulations $\{ \T_h \}_h$: \\
 There exists a positive constant $\kappa_\T$ such that
 \begin{equation}\label{eq:regularity mesh}
  \min_{\K \in \T_h} \frac{\abs{K}}{\diam(\K)^\dm} \ge \kappa_T, \quad \forall \h >0. 
\end{equation}

Assumption \eqref{eq:regularity mesh} is equivalent to the 
more common requirement  of the existence of a constant $\theta_\T>0$ such that 
\begin{equation}\label{eq:regularity}
  \max_{\K \in \T_h} \frac{\diam(\K)}{\mathfrak{D}_\K} \ge \kappa_T, \quad \forall \h >0, 
\end{equation}
where $\mathfrak{D}_\K$ is the diameter of the largest ball 
inscribed in the simplex $\K$. \\ 
\bigskip
\begin{figure}[htbp]
\begin{center}
\begin{tikzpicture} [remember picture]
% les points       A              B            C          D  
  \draw[thick] (-6,-0.4)--(-0.5,-0.8)--(4,0.1)--(0.5,2)--cycle;
% colorer en gris dual D 
    \filldraw [fill=gray!10,dashed] (-2,0.266)--(-0.5,-0.8)-- (0.5,2);
    \filldraw [fill=gray!10,dashed] (-0.5,-0.8)-- (0.5,2)-- (1.33,0.433);
% colorer en gris dual E 
    \filldraw [fill=gray!70,dashed] (1.33,0.433)--(-0.5,-0.8)-- (4,0.1);
%    tracer le seg BD 
  \draw[black] (-0.5,-0.8)--(0.5,2);     
       % Bar du controle volume K  relie B
	%   \draw [dashed] (-2,0.266)--(-0.5,-0.8); 
% Barycentre du controle volume K  relie D 
   \draw [dashed] (-2,0.266)--(0.5,2);
% Noter K
   \node (centre) at (-2.7,0.2){$\K$};   
% Noter L
  \node (centre) at (1.8,0.9){$\L$};
% Barycentre du controle volume L  relie B
   \draw [dashed] (1.33,0.433)--(-0.5,-0.8);
% Barycentre du controle volume L  relie D
   \draw [dashed] (1.33,0.433)--(0.5,2);
 % Noter E 
\node (centre) at (1.6,0.){$\Eu$};   	
% Barycentre du controle volume L relie C
  \draw [dashed] (1.33,0.433)--(4,0.1);
% Noter D
   \node (centre) at (-1.,0.4){$\Du$}; 
  % Milieu BD : pt Q_D
  \node (centre) at (0,0.6){$\times$};  
  \node (centre) at (0.35,0.6){$\Q_\Du$};  
    \node (centre) at (0.48,1.25){$\sigma_\Du$};  
  % Milieu BC : pt Q_E
  \node (centre) at (1.75,-0.35){$\times$};
  \node (centre) at (1.75,-0.57){$\Q_\Eu$};  
  \node (centre) at (2.8,-0.3){$\sigma_\Eu$};  
  \node (centre) at (0.48,1.25){$\sigma_\Du$};  
  \node (centre) at (0.8,-0.3){$\sig$};  
 % \node (centre) at (0,0){$\bullet$};
%  \node (parallelogramme) at (-2,1){};
\end{tikzpicture}
\caption{\small Triangles $\K$,$\L \in \T_\h$ and dual volumes $\Du$,$\Eu\in \D_\h$ associated with edges $\sigma_\Du,\sigma_\Eu \in \E_\h$}
\label{domaine}
\end{center}
\end{figure}
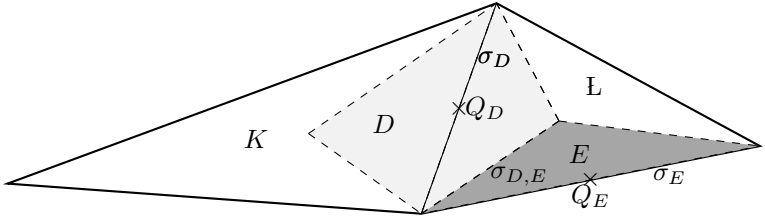

 We also use a dual partition $\D_\h$ of $\Omega$ such that 
  $\overline{\Omega} = \cup_{\Du \in \D_h} \Du$. There is one dual element 
  $\Du$ associated with each side $\sigma_\Du \in \E_\h$. We construct it by
  connecting the barycentres of every $\K \in \T_\h$ that contains $\sigma_\Du$  
  through the vertices of $\sigma_\Du$. For $\sigma_\Du \in \E^\ext_\h$, the contour 
  of $\Du$ is completed by the side $\sigma_\Du$ itself. We refer to Fig. \ref{domaine} 
  for the 
  two-dimensional case. We denote by $\Q_\Du$ the barycentre of the side $\sigma_\Du$. 
  As for the primal mesh, we set $\F_\h$, $\F_\h^\ant$, $\F_\h^\ext$ and $\F_\Du$ for 
  the dual mesh sides. We denote by $\D_\h^\ant$ the set of all interior and by $\D_\h^\ext$
  the set of all boundary dual volumes. We finally denote by $\Ne(\Du)$ the set of all adjacent volumes
  to the volume $\Du$, 
  $$
  \Ne(\Du) := \{  \Eu \in \D_\h; \exists \sigma \in \F_\h^\ant \text{ such that } \sigma = \partial \Du \cap \partial \Eu \}
  $$
  and remark that 
  \begin{align}\label{KcapD}
  	\abs{\K \cap \Du} = \frac{\abs{K}}{ \dm + 1},
  \end{align}
  for each $\K \in \T_\h$ and $\Du \in \D_\h$ such that $\sigma_\Du \in \E_\K$. 
  For $\Eu \in \Ne(\Du)$, we also set $\dis := \abs{\Q_\Eu - \Q_\Du}$,  $\sig := \partial \Du \cap \partial \Eu$
  and $\K_\DE$ the element of $\T_\h$ such that $\sig \subset \K_\DE$. \\
  
  The problem under consideration is time-dependent, hence we also need
   to discretize the time interval $(0,T)$. The time discretization of
  $(0,T)$ is given by an integer value $N$ and by a strictly
  increasing sequence of real values $(t^n)_{n\in [0,N]}$ with
  $t^0=0$ and $t^{N} = T$. Without restriction, we consider a uniform step time  $\dt = t^{n} - t^{n-1}$, for $n\in [1,N]$.\\
  
  We define the following finite-dimensional spaces:
\begin{align*}
  \X_\h & :=   \{     \va_\h \in L^2(\Omega); \va_\h |_\K \text{ is linear } \forall \K \in \T_\h,  \\ &
  	     \qquad  \va_\h \text{ is continuous at the points } \Q_\Du, \Du \in \D_\h^\ant\},  \\ 
\X_\h^0 & :=  \{ \va_\h \in \X_\h; \va_\h(\Q_\Du) = 0 \quad \forall \Du \in \D_\h^\ext  \}.	     
  \end{align*}
  The basis of  $\X_\h$ is spanned by the shape functions $\va_\Du$, $\Du \in \D_\h$, such that $\va_\D(\Q_\Eu) = \Kr$,
  $\Eu \in \D_\h$, $\kr$ being the Kronecker delta. We recall that the approximations in these spaces are nonconforming since
  $\X_\h \not\subset H^1(\Omega)$. We equip $\X_\h$ with the seminorm 
  $$
  \norm{u_\h}^2_{\X_\h} :=  \sum_{\K \in \T_\h} \int_{\K} \abs{ \nabla u_\h }^2 \dd x, 
  %,  \red{ \not =  \int_{\Omega} \abs{ \nabla u_\h }^2 \dd x ;\text{ car non conforme}}
  $$
  which becomes a norm on $\X_\h^0$. 

%\begin{remark}
% Let us notice that $\X_\h \not\subset H^1(\Omega)$ and  
 %$\X_\h^0 \not\subset H^1_0(\Omega)$. Therefore, we speak
%about nonconforming, piecewise linear finite elements.
%\end{remark}
%\bigskip
%
%
%\red{
For a given  value $u_\Du, \Du\in \D_\h$  (resp. $u_\Du^n, \Du\in \D_\h, n\in[0,N]$),  we define a  constant piecewise function  as : $u(x)=u_\Du$ for $x\in \Du$ (resp. $u(t,x) = u_\Du^n$ for $x\in \Du$, $t\in ]t^{n-1}, t^{n}]$. Next, we define the discret differential operateur : 
$$
\desn(u) = u_E-u_D, \text{ and }   \de(u) = u_E^n-u_D^n.
$$
%}
\begin{definition} \label{approximationsolution}
	Let the values $u_\Du^n$, $\Du \in \D_\h$, $n \in \{0,1, \cdots, N\}$. 
	% be the solutions to \eqref{cd:initial discret}-\eqref{eq:pc disc}. 
	As the approximate solutions of the problem 
	%\eqref{eq:principal}-\eqref{def:pression_capillaire.} 
	by means of the combined finite volume--nonconforming finite element scheme, we understand: 
	\begin{enumerate}
		\item A function $u_{\dt,\h}$ such that  
		\begin{align}\label{sol:FE}
			& u_{\dt,\h} (x,0) = u_\h^0(x) \text{ for } x \in \Omega, \notag \\ 
			& u_{\dt,\h} (x,t) = u_\h^n(x) \text{ for } x \in \Omega, t \in (t_{n-1},t_n] \qquad n \in \{1, \cdots, N\} ,
		\end{align}
		where $u_\h^n = \sum_{\Du \in \D_\h} u_\Du^n \va_\Du$;
		\item A function $\tilde{u}_{\dt,\h} $ such that 
		\begin{align}\label{sol:FV}
			& \tilde{u}_{\dt,\h}(x,0) = u_\Du^0 \text{ for }  x \in  \mathring{\Du}, \Du \in \D_\h, \notag \\ 
		  	& \tilde{u}_{\dt,\h}(x,t) = u_\Du^n \text{ for }  x \in  \mathring{\Du}, \Du \in \D_\h, t \in (t_{n-1},t_n] \qquad n \in \{1, \cdots, N\}.
		\end{align}		
	\end{enumerate}		
\end{definition}

The function $u_{\dt,\h}$ is piecewise linear and continuous in the barycentres of the interior sides
in space and piecewise constant in time; we will call it a \emph{nonconforming finite element solution}. The 
function $\tilde{u}_{\dt,\h}$ is given by the values of $u_\Du^n$ in side barycentres and is piecewise constant
on the dual volumes in space and piecewise  constant  in time; we will call it a \emph{finite volume solution.}

 \subsection{The combined scheme} \label{subsec:combined}
 For more clarity and for presentation simplicity, we present the combined scheme for a horizontal field and then we neglect the gravity effect. In remark \ref{rem:grav}, we indicate how to modify the scheme  to include the gravity terms.   

\begin{definition}{\bf (Combined scheme)} The fully implicit combined 
  finite volume-nonconforming finite element scheme for the problem 
  \eqref{eq:principal} reads: find the values $p_{\alpha,\Du}^n$, $\Du \in \D_\h$,
  $n \in \{1,\dotsb,N\}$, such that  
\begin{equation}\label{cd:initial discret} 
  p_{\alpha,\Du}^0=\frac{1}{\abs{\Du}} \int_{\Du} p^0_\alpha(x) \dd x,\;  
  s_{\alpha,\Du}^0=\frac{1}{\abs{\Du}} \int_{\Du} s^0_\alpha(x) \dd x, \text{ for all } 
  \Du \in \D_\h^{\ant},
\end{equation} 
\begin{multline}\label{eq:pl disc}
  \abs{\Du} \phi_\Du \frac{\rho_\liq(p^{n}_{\liq,\Du}) s^{n}_{\liq,\Du} - 
                           \rho_\liq(p^{n-1}_{\liq,\Du})   s^{n-1}_{\liq,\Du}}{\dt}      -
  \sum_{\Eu \in \Ne( \Du)} \rho^{n}_{\liq,\DE} \; M_{\liq}(s_{\liq,\DE}^{n}) \; 
                              \tensorDE \;  \de(p_\liq)   \\  + 
    \abs{\Du} \rho_\liq(p_{\liq,\Du}^{n}) s_{\liq,\Du}^{n}
   f_{P,\Du}^{n} = \abs{\Du} \rho_\liq(p_{\liq,\Du}^{n}) \sli f_{I,\Du}^{n},
\end{multline}
\begin{multline}\label{eq:pg disc}
 \abs{\Du} \phi_\Du \frac{\rho_\gas(p^{n}_{\gas,\Du}) s^{n}_{\gas,\Du} - 
                           \rho_\gas(p^{n-1}_{\gas,\Du})   s^{n-1}_{\gas,\Du}}{\dt}      -
  \sum_{\Eu \in \Ne( \Du)} \rho^{n}_{\gas,\DE} \; M_{\gas}(s_{\gas,\DE}^{n}) \; 
                              \tensorDE \; \de(p_\gas)    \\  + 
   \abs{\Du} \rho_\gas(p_{\gas,\Du}^{n}) s_{\gas,\Du}^{n}
   f_{P,\Du}^{n} = \abs{\Du} \rho_\gas(p_{\gas,\Du}^{n}) \sgi f_{I,\Du}^{n},
\end{multline}
\begin{equation}\label{eq:pc disc}
  p_c(s^{n}_{\liq,\Du}) = p_{\gas,\Du}^{n}-p_{\liq,\Du}^{n}.
\end{equation}
\end{definition}
We refer to the matrix $\tensor$ of the elements $\tensor_{\Du,\Eu}$, 
$\Du, \Eu \in \D_\h^{\ant}$, 
as to the diffusion matrix. This matrix, the stiffness matrix of the nonconforming finite
element method writes in the form
\begin{equation}\label{tensor}
\tensorDE := - \sum_{\K \in \T_\h} 
\left(\tensor(x) \nabla \varphi_\Eu, \nabla \varphi_\Du \right)_{0,\K} \quad \Du, \Eu \in \D_\h.
\end{equation}
%
%\bigskip 
%
Notice that  the source terms are, for  $n\in \{1,\ldots,N\}$
$$
f^{n}_{P,\Du}:=
          \frac{1}{\dt\abs{\Du}}
          \int_{t^{n-1}}^{t^{n}}
          \int_\Du f_P(t,x)\,dx dt,\quad f^{n}_{I,\Du}:=
          \frac{1}{\dt\abs{\Du}}
          \int_{t^{n-1}}^{t^{n}}
          \int_\Du f_I(t,x)\,dx dt. 
         $$
The mean value of the density of each phase on interfaces is not
classical since it is given as
\begin{equation}\label{meanrho}
\begin{aligned}
  \frac{1}{\rho^{n}_{\alpha,\DE}}=\begin{cases}
    \frac{1}{p_{\alpha,\Eu}^{n}-p_{\alpha,\Du}^{n}}
    \int_{p_{\alpha,\Du}^{n}}^{p_{\alpha,\Eu}^{n}}
    \frac{1}{\rho_\alpha(\zeta)}\,d\zeta
    & \text{ if } p_{\alpha,\Du}^{n} \ne p_{\alpha,\Eu}^{n}, \\
    \frac{1}{\rho^{n}_{\alpha,\Du}} & \text{ otherwise}.
          \end{cases}
\end{aligned}
 \end{equation}
 This choice is crucial to obtain estimates on discrete pressures. 
  
We denoted
  \begin{equation}
  \label{Gupwind}
   G_\alpha(s^{n}_{\alpha,\Du},s^{n}_{\alpha,\Eu}; \de(p_{\alpha})) = -M_{\alpha}(s_{\alpha,\DE}^{n}) \de(p_{\alpha}),
\end{equation}
the numerical fluxes, where $M_{\alpha}(s_{\alpha,\DE}^{n})$ denote the upwind
discretization of $M_\alpha(s_\alpha)$ on the interface $\sigma_{\DE}$
and
\begin{align}\label{notation:saturation_interface}
s_{\alpha,\DE}^{n}=\begin{cases}
  & s_{\alpha,\Du}^{n} \text{ if } (D,E)\in \E_\alpha^{n},\\
  & s_{\alpha,\Eu}^{n} \text{ otherwise, }
\end{cases}
\end{align}
with the set $\E_\alpha^{n}$ is subset of $\E_\h$
such that
\begin{align}\label{notation:set}
  \E_\alpha^{n}=\{ (\Du,\Eu)\in \E_\h,
  \tensorDE\; \de(p_\alpha) = \tensorDE
  \left( p_{\alpha,\Eu}^{n}-p_{\alpha,\Du}^{n}\right)\le 0 \}.
\end{align}

\begin{remark}\label{rem:grav}
 To take into account the gravity  term, it is enough to modify, for example,  the numerical fluxes to be 
 $$
  G_\alpha(s^{n}_{\alpha,\Du},s^{n}_{\alpha,\Eu}; \de(p_{\alpha})) =  -M_{\alpha}(s_{\alpha,\DE}^{n}) \de(p_{\alpha})+ \rho_\alpha(p^{n}_{\alpha,\DE})\Big( M_{\alpha}(s_{\alpha,\D}^{n}) g_{D|E}^+ - M_{\alpha}(s_{\alpha,E}^{n}) g_{D|E}^-\Big),
 $$
 with $g_{D|E}^\pm=(g\cdot \eta_{D|E})^\pm$. This numerical flux satisfy is consistent, conservative and monotone, thus the convergence result remains valid.
 \end{remark}

In the sequel we shell consider apart the following special case: \\
All transmissibility's are non-negative, i.e.
\begin{align}\label{matricepositive}
	\tensorDE \ge 0 \quad \forall \Du \in \D_\h^\ant, \Eu \in \Ne(\Du).
\end{align}
Since
$$
\nabla \varphi_{\Du|\K} = \frac{\abs{\sigma_\D}}{\abs{\K}}\n_{\sigma_\D},  \quad \K \in \T_\h, \sigma_\D \in \E_\K
$$
with $\n_{\sigma_\D}$ the unit normal vector of the side $\sigma_\D$ outward to $\K$, one can immediately 
see that Assumption \eqref{matricepositive} is satisfied when the diffusion tensor reduces to a scalar function and when the
magnitude of the angles between $\n_{\sigma_\D}$, $\sigma_\D \in \E_\K$, for all $\K \in \T_\h$ is greater or equal to 
$\pi/2$.

\bigskip

The main result of this paper is the following theorem.
\begin{theorem} \label{theo:principal} 
%Assume hypothesis
 % (H\ref{hyp:A1})-(H\ref{hyp:A6}) hold. Let
 % $(p^{0}_\alpha,s_\alpha^0) \in L^2(\Omega, \R)\times
%  L^\infty(\Omega, \R)$. Then
   There exists an approximate solutions
  $(p_{\alpha,D}^{n})_{n,D}$ corresponding to the system
  \eqref{eq:pl disc}-\eqref{eq:pg disc}, which converges (up to a
  subsequence) to a weak solution $p_\alpha$ of
  \eqref{eq:principal} in the sense of the Definition
  \ref{def:weak solution}.
\end{theorem}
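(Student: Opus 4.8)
The plan is to follow the standard program for convergence of schemes for degenerate parabolic systems, organized exactly as the four technical sections announced in the introduction: first existence of a discrete solution together with a discrete maximum principle, then uniform energy estimates, then compactness through Kolmogorov's theorem, and finally passage to the limit in a discrete weak formulation. For the existence of a solution of the nonlinear system \eqref{eq:pl disc}--\eqref{eq:pg disc}, I would first establish the discrete maximum principle $0\le s_{\alpha,\Du}^n\le 1$: testing the discrete $\alpha$-phase equation against suitable positive and negative parts of the discrete saturations and relying on the upwind mobilities \eqref{notation:saturation_interface}, the degeneracy $M_\alpha(0)=0$ of ({A}\ref{hyp:A3}), and the sign condition \eqref{matricepositive} on the transmissibilities, one shows that negative saturations and saturations exceeding one cannot occur. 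These bounds, together with the nondegeneracy $M(s_\liq)\ge m_0$ of ({A}\ref{hyp:A3}) and the density bounds ({A}\ref{hyp:A5}), confine any solution to a fixed compact set, and existence then follows from a topological degree / Brouwer fixed point argument applied to the map defined by the scheme.

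The core of the proof is the derivation of estimates that are uniform in $\h$ and $\dt$. Multiplying \eqref{eq:pl disc} by $p_{\liq,\Du}^n$ and \eqref{eq:pg disc} by $p_{\gas,\Du}^n$, summing over $\Du\in\D_\h$ and $n$, and handling the accumulation terms by a discrete integration by parts in time together with the nonclassical interface density average \eqref{meanrho} (the specific choice that makes the pressure estimate close), yields an energy estimate. Because the mobilities degenerate, this estimate does not control the discrete gradients of the individual phase pressures; following the strategy indicated in \eqref{plpgsbeta}, I would reorganize the dissipation in terms of the global pressure $p$ and the capillary function $\B(s_\liq)$. The nondegeneracy $M(s_\liq)\ge m_0$ then furnishes, uniformly in the discretization parameters, bounds on the discrete $\X_\h$-seminorms of $p$ and of $\B(s_\liq)$ in $L^2(0,T)$, and consequently an $L^2(Q_T)$ bound on each velocity $M_\alpha(s_\alpha)\tensor\nabla p_\alpha$.

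With these estimates available, compactness is obtained from the Kolmogorov relative compactness theorem. The uniform bounds on the $\X_\h$-seminorms of $p$ and $\B(s_\liq)$ control the space translates, while testing the evolution part of \eqref{eq:pl disc}--\eqref{eq:pg disc} against time-shifted quantities controls the time translates of $\rho_\alpha(p_\alpha)s_\alpha$, and hence, via the capillary relation and the H\"older continuity of $\B^{-1}$ assumed in ({A}\ref{hyp:A7}), of the saturations themselves. Kolmogorov's theorem then gives, along a subsequence, the strong $L^2(Q_T)$ convergence of $\B(s_\liq)$ and of the global pressure; strong convergence of $s_\liq$ follows by applying $\B^{-1}$, that of $s_\gas=1-s_\liq$ is immediate, and the phase pressures are recovered through \eqref{eq:pc disc} and \eqref{def:pression_globale}.

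The last step, passing to the limit, is where the main difficulty lies, and it combines two issues. I would first rewrite the scheme in weak form: multiplying the equation at $\Du$ by the nodal value $\varphi(t^n,\Q_\Du)$ of a smooth test function, summing, and exploiting the symmetry of the stiffness matrix \eqref{tensor} turns \eqref{eq:pl disc}--\eqref{eq:pg disc} into a discrete analogue of \eqref{eq:pl}--\eqref{eq:pg}. The first difficulty is that the scheme is nonconforming, so the broken gradient of the finite element solution is not the gradient of an $H^1$ function; one must show, under the sole shape regularity \eqref{eq:regularity mesh}, that the elementwise gradients converge weakly to a genuine weak gradient, i.e.\ that the face jumps of the Crouzeix--Raviart interpolant vanish in the limit, a consistency property established in \cite{Eymard-Hilhorst-Vohralik}. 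The main obstacle, however, is the passage to the limit in the degenerate diffusion terms $M_\alpha(s_\alpha)\tensor\nabla p_\alpha$: since only $\tensor\nabla p$ and $\nabla\B(s_\liq)$ are controlled, each $\nabla p_\alpha$ exists merely as a distribution, and one identifies the weak limit of $M_\alpha(s_\alpha)\tensor\nabla p_\alpha$ by combining the strong convergence of $s_\alpha$ (hence of $M_\alpha(s_\alpha)$ and of $\rho_\alpha(p_\alpha)$, by continuity) with the weak convergence of the controlled quantities and the decomposition \eqref{plpgsbeta}. One must also check that the upwind mobilities \eqref{notation:saturation_interface} share the same limit as the centred values, which follows from the strong convergence of the saturations together with the vanishing of their space translates. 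Matching each term to its continuous counterpart in the weak formulation of Definition \ref{def:weak solution} then completes the proof.
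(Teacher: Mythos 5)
Your overall program coincides with the paper's: maximum principle and existence by a topological-degree argument, energy estimates re-expressed through the global pressure and $\B(s_\liq)$, Kolmogorov compactness, and a limit passage built on the decomposition \eqref{plpgsbeta} together with the nonconforming consistency results of \cite{Eymard-Hilhorst-Vohralik}. But there is a genuine gap at the heart of your energy estimate. You propose to multiply \eqref{eq:pl disc} by $p^n_{\liq,\Du}$ and \eqref{eq:pg disc} by $p^n_{\gas,\Du}$. For this compressible system, where each density is a function of its own phase pressure, that choice does not close: the accumulation term $\sum_{n}\sum_{\Du}|\Du|\phi_\Du\bigl(\rho_\alpha(p^n_{\alpha,\Du})s^n_{\alpha,\Du}-\rho_\alpha(p^{n-1}_{\alpha,\Du})s^{n-1}_{\alpha,\Du}\bigr)p^n_{\alpha,\Du}$ has no telescoping or convexity structure (the trick $p_\liq\partial_t s_\liq+p_\gas\partial_t s_\gas=-\partial_t\pc(s_\liq)$ that makes this test function work in the incompressible case is destroyed by the factors $\rho_\alpha(p_\alpha)$), and the flux term retains an uncancelled density factor. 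The paper instead multiplies by $g_\alpha(p^n_{\alpha,\Du})$ with $g_\alpha(p)=\int_0^p \rho_\alpha(z)^{-1}\,\dd z$: concavity of $g_\alpha$ and of $\pc$ yields the lower bound \eqref{bibi12} for the accumulation term, expressed through $\mathcal A_\alpha(p)=\rho_\alpha(p)g_\alpha(p)-p$, and — this is the whole point of the "nonclassical" interface density \eqref{meanrho}, which you cite but pair with the wrong test function — the identity \eqref{choice_of_density}, namely $\rho^n_{\alpha,\DE}\bigl(g_\alpha(p^n_{\alpha,\Du})-g_\alpha(p^n_{\alpha,\Eu})\bigr)=p^n_{\alpha,\Du}-p^n_{\alpha,\Eu}$, makes the interface density in the flux cancel exactly and produces the clean dissipation term in \eqref{E2}. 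Without the $g_\alpha$ transform, the estimate \eqref{est:p_alpha}, and hence everything downstream (the bounds on $p$ and $\B(s_\liq)$, the compactness, the limit passage), is unsupported.

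A second, smaller flaw concerns the compactness step. Kolmogorov's theorem cannot be applied directly to $\B(s_{\liq,\hdt})$ and $p_\hdt$ as you state, because only their \emph{space} translates are controlled by the gradient bounds; the time-translate estimate coming from the evolution equations concerns $\U_{\alpha,\hdt}=\phi\rho_\alpha(p_{\alpha,\hdt})s_{\alpha,\hdt}$, and these are the quantities the paper compactifies (Lemmas \ref{lem:translater-espace} and \ref{lem:translater-time}, Theorem \ref{theo:strong-convergence}). To transfer the a.e.\ convergence of the pair $(u_\liq,u_\gas)$ to $(p,\B(s_\liq))$ — and only then to $s_\liq$ and the phase pressures — the paper invokes the continuity of the map $\mathbb{A}:(u_\liq,u_\gas)\mapsto(p,\B(s_\liq))$, a diffeomorphism established in \cite{CS07}, \cite{ZS10}, \cite{CS10}. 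Your sketch jumps from translate estimates on $\rho_\alpha(p_\alpha)s_\alpha$ to strong convergence of $\B(s_\liq)$ and of the global pressure without this inversion step; the H\"older continuity of $\B^{-1}$ alone does not accomplish it, since neither $s_\liq$ nor $p$ is an explicit continuous function of a single $u_\alpha$.
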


%==========================================================
\section{Preliminary fundamental lemmas}\label{sec:fundamental_lemma}

In this section, we will first present several technical lemmas that will be used in 
our latter analysis to obtain a priori estimate of the solution of the discrete problem.

\bigskip

\begin{lemma} \label{coercive}
	For all $u_\h = \sum_{\Du \in \D_\h} u_\Du \va_\Du \in \X_\h$ ,%and $n \in \{1,2,\cdots,N \}$ ,
	$$
	\sum_{\Du \in \D_\h} \sum_{\Eu \in \Ne(\Du)} \tensorDE (\desn{u})^2 \ge c_\tensor  \norm{u_\h}_{\X_\h}^2.
	$$
\end{lemma}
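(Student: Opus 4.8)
The plan is to recast the left-hand double sum as (twice) the elementwise energy bilinear form of the nonconforming finite element method and then to invoke the uniform ellipticity of $\tensor$. First I would enlarge the inner summation from $\Ne(\Du)$ to all of $\D_\h$ without changing its value. Indeed, the diagonal term $\Eu=\Du$ contributes nothing since $(\desn{u})^2=(u_\Du-u_\Du)^2=0$, while for any $\Eu\notin\Ne(\Du)\cup\{\Du\}$ the sides $\sigma_\Du$ and $\sigma_\Eu$ belong to no common simplex, so the supports of $\nabla\va_\Du$ and $\nabla\va_\Eu$ overlap only on a set of measure zero and hence $\tensorDE=0$ by \eqref{tensor}. (Conversely, two sides of a common simplex produce adjacent dual volumes, so no nonzero term is lost.) Consequently
\[
\sum_{\Du\in\D_\h}\sum_{\Eu\in\Ne(\Du)}\tensorDE(\desn{u})^2=\sum_{\Du\in\D_\h}\sum_{\Eu\in\D_\h}\tensorDE(\desn{u})^2 .
\]

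The key structural fact I would use next is that the nonconforming (Crouzeix--Raviart) shape functions form a partition of unity, $\sum_{\Eu\in\D_\h}\va_\Eu\equiv 1$ on $\Omega$ (on each simplex the $\dm+1$ local basis functions sum to $1$). Therefore $\nabla\bigl(\sum_\Eu\va_\Eu\bigr)=0$, and from the definition \eqref{tensor} both the row and the column sums of the stiffness matrix vanish: $\sum_{\Eu}\tensorDE=0$ for every $\Du$ and $\sum_{\Du}\tensorDE=0$ for every $\Eu$. Expanding $(\desn{u})^2=(u_\Eu-u_\Du)^2=u_\Eu^2-2u_\Du u_\Eu+u_\Du^2$ and summing, the two pure-square terms are annihilated by these zero sums, leaving only the cross term:
\[
\sum_{\Du\in\D_\h}\sum_{\Eu\in\D_\h}\tensorDE(\desn{u})^2=-2\sum_{\Du\in\D_\h}\sum_{\Eu\in\D_\h}\tensorDE\,u_\Du u_\Eu .
\]

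Finally I would identify this cross term with the diffusion energy. Writing $\nabla u_\h=\sum_{\Du}u_\Du\nabla\va_\Du$ on each $\K$ and substituting \eqref{tensor}, one obtains
\[
-\sum_{\Du\in\D_\h}\sum_{\Eu\in\D_\h}\tensorDE\,u_\Du u_\Eu=\sum_{\K\in\T_\h}\int_\K\tensor\nabla u_\h\cdot\nabla u_\h\,\dd x .
\]
Applying assumption $({A}\ref{hyp:A2})$, i.e. $\left<\tensor(x)\xi,\xi\right>\ge c_\tensor\abs{\xi}^2$, with $\xi=\nabla u_\h$ on each element gives $\sum_{\K}\int_\K\tensor\nabla u_\h\cdot\nabla u_\h\,\dd x\ge c_\tensor\sum_{\K}\int_\K\abs{\nabla u_\h}^2\,\dd x=c_\tensor\norm{u_\h}^2_{\X_\h}$. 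Combining the three displays, the left-hand side equals $2\sum_{\K}\int_\K\tensor\nabla u_\h\cdot\nabla u_\h\,\dd x\ge 2c_\tensor\norm{u_\h}^2_{\X_\h}\ge c_\tensor\norm{u_\h}^2_{\X_\h}$, which is the claimed bound (the factor $2$ only strengthens it).

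The main obstacle I anticipate is the bookkeeping behind the zero-row/column-sum property: one must verify the partition-of-unity identity for the nonconforming basis and justify carefully the reduction from $\Ne(\Du)$ to $\D_\h$, namely that every pair $(\Du,\Eu)$ with $\tensorDE\neq0$ and $\Eu\neq\Du$ corresponds to adjacent dual volumes. Once the algebraic identity $\sum_{\Du,\Eu}\tensorDE(\desn{u})^2=2\sum_{\K}\int_\K\tensor\nabla u_\h\cdot\nabla u_\h\,\dd x$ is in place, coercivity follows at once from $({A}\ref{hyp:A2})$.
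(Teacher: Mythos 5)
Your proof is correct and follows essentially the same route as the paper's: both reduce the left-hand side to the identity $\sum_{\Du}\sum_{\Eu}\tensorDE(\desn{u})^2 = 2\sum_{\K\in\T_\h}(\tensor\nabla u_\h,\nabla u_\h)_{0,\K}$ using the sparsity and zero row sums of the stiffness matrix (i.e., the partition-of-unity property of the Crouzeix--Raviart basis), and then conclude by the ellipticity assumption $({A}2)$. The only difference is organizational — you expand the square and cancel the pure-square terms, while the paper performs a summation by parts using symmetry first — and your write-up actually makes explicit the row/column-sum facts that the paper leaves implicit.
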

\begin{proof}
	We have  
		\begin{align*}
		\sum_{\Du \in \D_\h} \sum_{\Eu \in \Ne(\Du)} \tensorDE (\desn{u})^2 & = 
		\sum_{\Du \in \D_\h} \sum_{\Eu \in \Ne(\Du)} \tensorDE (u_\Eu - u _\Du)^2 \\ & = - 2
		\sum_{\Du \in \D_\h} u_\Du \sum_{\Eu \in \Ne(\Du)} \tensorDE (u_\Eu - u _\Du) \\ & = -2
		\sum_{\Du \in \D_\h} u_\Du \sum_{\Eu \in \D_\h} \tensorDE u_\Eu  \\ & = 2
		\sum_{\K  \in \T_\h} ( \tensor \nabla u_\h, \nabla u_\h)_{0,\K} \ge c_\tensor \norm{u_\h}_{\X_\h}^2,  
		\end{align*}
	using  \eqref{tensor} and assumption (A\ref{hyp:A2}). 
\end{proof}

\bigskip

We state this lemma without a proof as well (cf. \cite[Lemma 3.1]{Eymard-Hilhorst-Vohralik}):
 \begin{lemma}
  	For all $u_\h = \sum_{\Du \in \D_\h} u_\Du \va_\Du \in \X_\h$, one has 
  \begin{align}
   %   & \sum_{\sig \in \F_\h^\ant} \diam(\K_\DE)^{\dm - 2} \left( u_\Eu - u_\Du \right)^2 \le  \frac{\dm + 1}{2\dm\KT } \norm{u_\h}_{\X_\h}^2, \label{ineq1} \\ 
      & \sum_{\sig \in \F_\h^\ant} \frac{\abs{\sig}}{\dis} \left( u_\Eu - u_\Du \right)^2 \le         \frac{\dm + 1}{2(\dm-1)\KT } \norm{u_\h}_{\X_\h}^2. \label{ineq2}
  \end{align}
  \end{lemma}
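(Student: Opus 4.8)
The plan is to localize the left-hand sum to individual primal simplices, exploit that $u_\h$ is affine on each of them, and thereby reduce the statement to a purely geometric inequality per simplex. The starting observation is structural: each interior dual side $\sig \in \F_\h^\ant$ is contained in a single simplex $\K_\DE \in \T_\h$, and consequently the two barycentres $\Q_\Du$ and $\Q_\Eu$ of the associated primal sides both lie in $\overline{\K_\DE}$ (each on one face of $\K_\DE$). This is what lets the nonconforming, possibly discontinuous, function $u_\h$ be treated as genuinely affine on the relevant cell.

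First I would use that $u_\h$ is affine on $\K_\DE$, together with $u_\h(\Q_\Du) = u_\Du$ (which follows from $\va_\Du(\Q_\Eu) = \Kr$), to write the exact identity
\[
u_\Eu - u_\Du = \nabla u_\h|_{\K_\DE} \cdot (\Q_\Eu - \Q_\Du).
\]
By Cauchy--Schwarz and $\dis = \abs{\Q_\Eu - \Q_\Du}$ this gives $(u_\Eu - u_\Du)^2 \le \abs{\nabla u_\h|_{\K_\DE}}^2 \dis^2$, hence
\[
\frac{\abs{\sig}}{\dis}(u_\Eu - u_\Du)^2 \le \abs{\sig}\,\dis\,\abs{\nabla u_\h|_{\K_\DE}}^2.
\]
Summing over $\sig \in \F_\h^\ant$ and grouping terms by the unique simplex $\K = \K_\DE$ containing each dual side, with the gradient constant on $\K$, yields
\[
\sum_{\sig \in \F_\h^\ant} \frac{\abs{\sig}}{\dis}(u_\Eu - u_\Du)^2 \le \sum_{\K \in \T_\h} \abs{\nabla u_\h|_\K}^2 \sum_{\sig \subset \K} \abs{\sig}\,\dis.
\]

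It then remains to prove the geometric estimate $\sum_{\sig \subset \K} \abs{\sig}\,\dis \le \frac{\dm+1}{2(\dm-1)\KT}\abs{\K}$ for every $\K$, after which summing and using $\sum_{\K} \abs{\nabla u_\h|_\K}^2 \abs{\K} = \norm{u_\h}_{\X_\h}^2$ closes the proof. To establish it I would use the explicit barycentric geometry of the dual mesh: if $a_0,\dots,a_\dm$ are the vertices of $\K$ and $b_\K$ its barycentre, then the side barycentre opposite $a_i$ is $\Q_i = \frac{1}{\dm}\sum_{k\ne i} a_k$, so $\Q_\Eu - \Q_\Du = \frac{1}{\dm}(a_i - a_j)$ and hence $\dis \le \diam(\K)/\dm$. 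The dual interface $\sig$ between the two volumes is the $(\dm-1)$-dimensional simplicial cone with apex $b_\K$ over the $(\dm-2)$-face shared by the two primal sides, whose measure is bounded by $\diam(\K)^{\dm-1}/(\dm-1)!$ via the cone volume formula. Since there are $\binom{\dm+1}{2}$ such pairs in $\K$, the quantity $\sum_{\sig\subset\K}\abs{\sig}\,\dis$ is controlled by a constant multiple of $\diam(\K)^\dm$; invoking the shape-regularity bound $\diam(\K)^\dm \le \abs{\K}/\KT$ from \eqref{eq:regularity mesh} supplies the factor $1/\KT$ and, tracking constants, exactly the coefficient $\frac{\dm+1}{2(\dm-1)}$ in the physical cases $\dm = 2,3$ (and a smaller admissible constant for larger $\dm$).

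The main obstacle is this final geometric step: one must correctly identify the shape of $\sig$ as a simplicial cone from $b_\K$ over the common $(\dm-2)$-face of the two adjacent primal sides (this is the source of the $\dm-1$ in the denominator, through the cone volume formula), and then combine the diameter bounds for $\abs{\sig}$ and for $\dis$ with the count $\binom{\dm+1}{2}$ of interior dual faces per simplex. Everything else reduces to the exact affine identity on $\K_\DE$ and a single application of Cauchy--Schwarz, so no genuine analytic difficulty remains once the geometry of the cell-local dual partition is pinned down.
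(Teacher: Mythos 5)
Your proof is correct and follows essentially the same route as the proof in the reference the paper cites for this lemma (the paper itself states it without proof, pointing to Lemma 3.1 of \cite{Eymard-Hilhorst-Vohralik}): localize to the unique simplex $\K_\DE$ containing each interior dual side, use the exact affine identity $u_\Eu-u_\Du=\nabla u_\h|_{\K_\DE}\cdot(\Q_\Eu-\Q_\Du)$ plus Cauchy--Schwarz, and close with the per-simplex geometric bounds $\dis\le \diam(\K)/\dm$, $\abs{\sig}\le \diam(\K)^{\dm-1}/(\dm-1)$, the count $\binom{\dm+1}{2}$ of dual interfaces per simplex, and shape regularity \eqref{eq:regularity mesh} --- precisely the estimates the paper itself records later in \eqref{regularity}. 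Your bookkeeping of the constant, giving $\tfrac{\dm+1}{2(\dm-1)\KT}$ exactly for $\dm=2,3$ and an even smaller (hence admissible) constant for $\dm\ge 4$, is also accurate.
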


\bigskip

In the continuous
case, we have the following relationship between the global pressure,
capillary pressure and the pressure of each phase
\begin{equation}\label{eq:relation}
  \begin{aligned}
     M_{l}
    |\nabla p_{l}|^2 + M_{g} |\nabla p_{g}|^2=M |\nabla p |^{2} + \frac{M_{l}M_{g}}{M}|\nabla p_c |^{2} .
  \end{aligned}
\end{equation}
This relationship, means that, the control of the velocities ensures
the control of the global pressure and the capillary terms $\B$ in the
whole domain regardless of the presence or the disappearance of the
phases.

In the discrete case, these relationship, are not obtained in a
straightforward way. This equality is replaced by three discrete
inequalities which we state in the following lemma.

\begin{lemma}\label{lemma:preliminary_step}
$\left(\text{Total mobility, global pressure, Capillary term } \B \text{ and Dissipative terms } \right)$
  Under the assumptions $({A}\ref{hyp:A1})-({A}\ref{hyp:A7})$ and the
  notations \eqref{def:pression_globale}.  Then for all $(\Du,\Eu)\in \D$ and for all $n\in [0,N]$ the
  following inequalities hold:
  \begin{equation}\label{est:mobilite}
    M_{\liq,\Du|\Eu}^{n} + M_{\gas,\Du|\Eu}^{n} \ge m_0,
  \end{equation}

 \begin{equation}\label{ineq:pression_globale}
   m_0\Big(\de (p)\Big)^2
   \le M_{\liq,\Du|\Eu}^{n}\Big(\de
   (p_\liq)\Big)^2 + M_{\gas,\Du|\Eu}^{n}\Big(\de (p_\gas)\Big)^2.
 \end{equation}
 
 \begin{equation}\label{ineq:discrete_beta}
    (\de(\B(s_\liq)))^2  \le M_{\liq,\Du|\Eu}^{n}\Big(\de (p_\liq)\Big)^2 +
    M_{\gas,\Du|\Eu}^{n}\Big(\de (p_\gas)\Big)^2, 
  \end{equation}
  \begin{equation}\label{ineq:discrete_pbar}
    M_{\liq,\Du|\Eu}^{n}(\de(\bar{p}(s_\liq)))^2  \le
    M_{\liq,\Du|\Eu}^{n}\Big(\de (p_\liq)\Big)^2 + M_{\gas,\Du|\Eu}^{n}\Big(\de
    (p_\gas)\Big)^2,
      \end{equation}
and 
   \begin{equation}\label{ineq:discrete_ptilde}
     M_{\gas,\Du|\Eu}^{n}(\de(\tilde{p}(s_\liq)))^2  \le
     M_{\liq,\Du|\Eu}^{n}\Big(\de (p_\liq)\Big)^2 + M_{\gas,\Du|\Eu}^{n}\Big(\de
     (p_\gas)\Big)^2.
   \end{equation}

\end{lemma}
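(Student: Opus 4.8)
The plan is to fix a single dual interface $\sig=\partial\Du\cap\partial\Eu$ and a time level $n$, abbreviate $a:=\de(p_\liq)$, $b:=\de(p_\gas)$, $M_\liq:=M_{\liq,\Du|\Eu}^{n}$, $M_\gas:=M_{\gas,\Du|\Eu}^{n}$, $M:=M_\liq+M_\gas$, and to reproduce the pointwise continuous identity \eqref{eq:relation} by a discrete Pythagorean splitting. The capillary law \eqref{eq:pc disc} supplies the algebraic link $\de(p_c(s_\liq))=b-a$, and the backbone of everything is the elementary identity, valid for any nonnegative $M_\liq,M_\gas$,
\[
 M_\liq\,a^2+M_\gas\,b^2=M\,\widehat p^{\,2}+\frac{M_\liq M_\gas}{M}\,(b-a)^2,\qquad \widehat p:=\frac{M_\liq a+M_\gas b}{M},
\]
obtained by a discrete variance (Cauchy--Schwarz) computation; both summands are nonnegative and they mirror the two terms on the right of \eqref{eq:relation}.

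Next I would convert the capillary differences into multiples of $b-a$. Because $p_c'$ keeps a constant sign on $[0,1]$ by (A\ref{hyp:A6}) and the mobility fractions are continuous with $M\ge m_0>0$, the mean value theorem for integrals applied to \eqref{def:terme_capillaire} and \eqref{def:beta} produces intermediate saturations on the segment from $s_{\liq,\Du}^{n}$ to $s_{\liq,\Eu}^{n}$ for which $\de(\bar p)=\lambda(b-a)$ and $\de(\B)=-\Theta(b-a)$, where $\lambda$ is a value of $M_\gas/M$ and $\Theta$ a value of $M_\liq M_\gas/M$ on that segment, so $\lambda\in[0,1]$ and $\Theta\ge0$. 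Since $\bar p'-\tilde p'=p_c'$ forces $\de(\bar p)-\de(\tilde p)=b-a$, the same parameter gives $\de(\tilde p)=-(1-\lambda)(b-a)$, and \eqref{def:pression_globale} then shows $\de(p)=a+\de(\bar p)=(1-\lambda)a+\lambda b$ is a convex combination of $a$ and $b$, exactly like $\widehat p=(1-\tfrac{M_\gas}{M})a+\tfrac{M_\gas}{M}b$. Feeding these representations into the identity and discarding the nonnegative term $M\widehat p^{\,2}$ turns \eqref{ineq:discrete_pbar}, \eqref{ineq:discrete_ptilde} and \eqref{ineq:discrete_beta} into the pointwise comparisons $\lambda^2\le M_\gas/M$, $(1-\lambda)^2\le M_\liq/M$ and $\Theta^2\le M_\liq M_\gas/M$ (using $\lambda^2\le\lambda\le1$), while \eqref{ineq:pression_globale} reduces, via $m_0\le M$ from \eqref{est:mobilite}, to the comparison of convex combinations $(\de(p))^2\le\widehat p^{\,2}$.

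I would dispatch \eqref{est:mobilite} first and on its own: when both phases are upwinded from the same vertex it is precisely (A\ref{hyp:A3}) read at that vertex through $s_\gas=1-s_\liq$; when the upwind vertices differ, (A\ref{hyp:A6}) lets me deduce the ordering of $s_{\liq,\Du}^{n}$ and $s_{\liq,\Eu}^{n}$ from the sign conditions in \eqref{notation:set}, after which monotonicity of each mobility in its own saturation brings the sum back above the same-vertex value $m_0$.

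The hard part will be the last reduction, and it is where the upwinding is indispensable. The capillary differences carry mobility fractions evaluated at \emph{intermediate} (mean value) saturations of the segment $[s_{\liq,\Du}^{n},s_{\liq,\Eu}^{n}]$, whereas the weights $M_\gas/M$ appearing in $\widehat p$ and in the right-hand sides are built from the \emph{upwind nodal} mobilities, and the selected upwind vertex may even differ from one phase to the other. Reconciling these two evaluation points is the real content of the lemma; a naive Cauchy--Schwarz is too lossy (it would replace $m_0$ by the possibly tiny $M_\liq$ or $M_\gas$). I expect to settle it by a case distinction on the two directions recorded in \eqref{notation:set}: (A\ref{hyp:A6}) fixes the order of the two saturations from the signs of $a$ and $b$, and the monotonicity of $s_\liq\mapsto M_\gas/M$ then forces the segment mean $\lambda$ onto the correct side of the interface weight, so that $\lambda\le M_\gas/M$ and $(\de(p))^2\le\widehat p^{\,2}$ hold in every case; configurations in which a mobility degenerates are harmless, since the matching left-hand side vanishes too.
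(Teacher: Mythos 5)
First, a point of comparison: the paper does not prove this lemma at all. It states it and invokes the proof given in \cite{saadsaadFV} for an admissible primal mesh, arguing that since that proof involves only two neighbouring volumes and the definition of the global pressure, it transfers verbatim to the dual mesh (a claim which itself silently requires the non-negativity \eqref{matricepositive} of the transmissibilities, and the monotonicity of $M_\alpha$ in its own saturation, which is used implicitly throughout but is not listed in $({A}1)$--$({A}7)$). So your attempt is a from-scratch reconstruction, and it has a genuine gap exactly at the step you call ``the real content''. Write $a:=\de(p_\liq)$, $b:=\de(p_\gas)$, $M_\liq,M_\gas$ for the upwind mobilities, $\mu:=M_\gas/(M_\liq+M_\gas)$. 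Your reduction of \eqref{ineq:pression_globale} to $(\de(p))^2\le\widehat p^{\,2}$ is false in the mixed-upwind case. Take $\tensorDE>0$ and $a<0<b$, so the liquid is upwinded at $\Du$ and the gas at $\Eu$, and choose the pressure drops (compatibly with $b-a=\de(p_c)>0$) so that $M_\liq a+M_\gas b=0$, i.e.\ $\widehat p=0$; then $\de(p)=(1-\lambda)a+\lambda b=(\lambda-\mu)\,\de(p_c)$, and since $\lambda$ is an intermediate-point value of $M_\gas/M$ one has generically $\lambda\neq\mu$ (e.g.\ $M_\alpha(s_\alpha)=s_\alpha^2$, $s_{\liq,\Du}=1$, $s_{\liq,\Eu}$ near $1$ gives $\lambda\approx\mu/3$), whence $(\de(p))^2>\widehat p^{\,2}=0$. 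When $a$ and $b$ have opposite signs no ordering of $\lambda$ and $\mu$ can repair this, because $w\mapsto((1-w)a+wb)^2$ is not monotone in $w$; the true inequality survives there only thanks to the capillary term $\frac{M_\liq M_\gas}{M}(b-a)^2$ that your reduction discards. The correct route never passes through $\widehat p$: keep Jensen, $(\de(p))^2\le(1-\lambda)a^2+\lambda b^2$, and note that in the mixed case the upwinding selects for each phase the vertex with the larger saturation, so that $m_0\lambda\le M_\gas(c)\le M_\gas$ and $m_0(1-\lambda)\le M_\liq(c)\le M_\liq$ hold term by term.

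Your second error is that the difficulty sits in the opposite case from where you placed it. In the same-vertex case (both phases upwinded at $\Du$, hence $a,b$ of the same sign), one of the two weight comparisons is \emph{not} available: if $\de(p_c)\le 0$ the intermediate point satisfies $c\ge s_{\liq,\Du}$, so $M_\liq(c)\ge M_\liq(s_{\liq,\Du})=M_\liq$ and the bound $m_0(1-\lambda)\le M_\liq(c)$ gives no control by $M_\liq$ (symmetrically for the gas when $\de(p_c)\ge 0$). The proof must instead exploit that in this case $\de(p_c)=b-a$ fixes which of $a^2,b^2$ is larger, and rebalance: for instance, for $b\le a\le 0$ one writes $m_0\bigl[(1-\lambda)a^2+\lambda b^2\bigr]=m_0a^2+m_0\lambda(b^2-a^2)\le(M_\liq+M_\gas)a^2+M_\gas(b^2-a^2)=M_\liq a^2+M_\gas b^2$, using only \eqref{est:mobilite} and the one comparison $m_0\lambda\le M_\gas$ that does hold. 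This compensation mechanism is absent from your proposal, and it is also what your reductions for \eqref{ineq:discrete_pbar}--\eqref{ineq:discrete_beta} lack: the claimed pointwise bounds $\lambda^2\le M_\gas/M$ and $\Theta^2\le M_\liq M_\gas/M$ fail whenever the non-flowing phase's selected mobility vanishes (e.g.\ $s_{\liq,\Du}=1$, gas upwinded at $\Du$, so $M_\gas=0$ while $\lambda,\Theta>0$), and those inequalities are then rescued by $(b-a)^2\le a^2$, not by any weight ordering. Finally, note that with $\tensorDE<0$ the sets \eqref{notation:set} select downwind values and even \eqref{est:mobilite} fails (both selected mobilities can vanish simultaneously), so any correct proof, yours or the cited one, must be read under assumption \eqref{matricepositive}.
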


In  \cite{saadsaadFV}, the authors prove this lemma on primal mesh satisfying the orthogonal condition. This proof use only two neighbors elements and it is based only on the definition of the global pressure. Thus, we state the above Lemma without proof since the proof made in \cite{saadsaadFV} remains valid on the dual mesh.

%The proof closely for \eqref{est:mobilite} \eqref{ineq:pression_globale}
 %follows the proof of \cite[Lemma 4.1]{Eymard00}. The proof of this result can 
 %be applied for compressible flow since the proof use only the definition of the global pressure. 
% For \eqref{ineq:discrete_beta}--\eqref{ineq:discrete_ptilde}, 
%the proof is made  by B. Saad and M. Saad in \cite{saadsaadFV}. 
%==========================================================
\section{A priori estimates and existence of the approximate
  solution}\label{sec:basic-apriori}
%==========================================================  
  
We derive new energy estimates on the discrete velocities
$M_\alpha(s_{\alpha,\Du|\Eu}^{n}) \de(p_\alpha)$. Nevertheless, these
estimates are degenerate in the sense that they do not permit the
control of $\de(p_\alpha)$, especially when a phase is missing. So,
the global pressure has a major role in the analysis, we will show
that the control of the discrete velocities
$M_\alpha(s_{\alpha,\Du|\Eu}^{n}) \de(p_\alpha)$ ensures the control of
the discrete gradient of the global pressure and the discrete gradient
of the capillary term ${\mathcal B}$ in the whole domain regardless of the
presence or the disappearance of the phases.

The following section gives us some necessary energy estimates to
prove the theorem \ref{theo:principal}.
\subsection{The maximum principle}
Let us show in the following Lemma that the phase by phase upstream
choice yields the $L^\infty$ stability of the scheme which is a basis
to the analysis that we are going to perform.
\begin{lemma}\label{lem:principe_maximum}$\left(\text{Maximum principe} \right)$.
  Under assumptions ({A}\ref{hyp:A1})-({A}\ref{hyp:A7}).  Let
  $(s_{\alpha,\Du}^{0})_{\Du  \in \D_\h}\in [0,1]$ and assume that $(p_{\alpha,\Du}^{n})_{\Du  \in \D_\h}$ is a solution of
  the finite volume \eqref{cd:initial discret}-\eqref{eq:pc disc}. Then, the
  saturation $(s_{\alpha,\Du}^{n})_{\Du \in \D_\h,}$ remains in $[0,1]$ for all $\Du \in \D_\h, \; n \in
    \{1,\ldots,N\}$.
\end{lemma}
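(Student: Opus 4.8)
The plan is to prove by induction on $n$ that $0 \le s_{\alpha,\Du}^{n} \le 1$ for all $\Du \in \D_\h$, the base case $n=0$ being the hypothesis. For the inductive step, I would argue by contradiction, focusing separately on the two bounds. Since $s_\liq + s_\gas = 1$ and $s_{\alpha,\Du}^{n}$ is determined through the capillary pressure relation \eqref{eq:pc disc}, it suffices to show $s_{\liq,\Du}^{n} \ge 0$ and $s_{\gas,\Du}^{n} \ge 0$ (equivalently $s_{\liq,\Du}^{n} \le 1$). I would treat the lower bound $s_{\liq,\Du}^{n}\ge 0$ and note that the upper bound follows by the symmetric argument applied to the gas phase equation \eqref{eq:pg disc}.

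The core of the argument is a standard finite-volume contradiction. Suppose the minimum of $(s_{\liq,\Du}^{n})_{\Du}$ over all dual volumes is attained at some $\Du_0$ and is strictly negative, so $s_{\liq,\Du_0}^{n} < 0 \le s_{\liq,\Du_0}^{0}$ and $s_{\liq,\Du_0}^{n} \le s_{\liq,\Eu}^{n}$ for every neighbour $\Eu \in \Ne(\Du_0)$. I would write the liquid equation \eqref{eq:pl disc} at $\Du_0$ and examine each term's sign. The key observation is the phase-by-phase upstream choice \eqref{notation:saturation_interface}: for the diffusive flux $\sum_{\Eu} \rho^{n}_{\liq,\DE} M_\liq(s_{\liq,\DE}^{n})\,\tensorDE\,\de(p_\liq)$, the upwind selection of $s_{\liq,\DE}^{n}$ together with the sign of $\tensorDE\,\de(p_\liq)$ (encoded in the set $\E_\alpha^{n}$ of \eqref{notation:set}) ensures that, thanks to the assumption \eqref{matricepositive} that all transmissibilities are non-negative and that $M_\liq(0)=0$ from (A\ref{hyp:A3}), each flux term carries a sign making it impossible to balance the evolution term if $s_{\liq,\Du_0}^{n}$ were negative. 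Concretely, at a negative minimum the mobility $M_\liq$ evaluated at the upstream saturation vanishes or the flux points inward, while the discrete time-derivative term $\tfrac{|\Du_0|\phi_{\Du_0}}{\dt}(\rho_\liq(p_{\liq,\Du_0}^{n})s_{\liq,\Du_0}^{n} - \rho_\liq(p_{\liq,\Du_0}^{n-1})s_{\liq,\Du_0}^{n-1})$ is strictly negative because $\rho_\liq$ is positive (A\ref{hyp:A5}) and $s_{\liq,\Du_0}^{n-1}\ge 0$ by induction; the production and injection source terms are handled using $f_{P,\Du}^{n}, f_{I,\Du}^{n}\ge 0$ (A\ref{hyp:A4}) and $s^I_\liq \ge 0$, and these also push in the favourable direction. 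Collecting signs yields a contradiction.

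The main obstacle, and the point requiring the most care, is the interplay between the nonlinear coupling through the densities $\rho_\alpha(p_{\alpha,\Du}^{n})$ and the upwind mobilities when testing the sign at the extremum: unlike the incompressible case, the factor multiplying $s_{\liq,\Du_0}^{n}$ in the accumulation term involves $\rho_\liq(p_{\liq,\Du_0}^{n})$, which depends on the unknown pressure, so I must verify that this factor is strictly positive (guaranteed by $\rho_m>0$ in (A\ref{hyp:A5})) and that it does not interfere with the flux sign analysis. The crucial structural fact that makes the argument close is exactly the decentring \eqref{notation:saturation_interface}: because the relative permeability is evaluated at the \emph{upstream} saturation, at an interior minimum $\Du_0$ the term $M_\liq(s_{\liq,\Du_0|\Eu}^{n})$ either vanishes (when $\Du_0$ is upstream and $s_{\liq,\Du_0}^{n}\le 0$ forces $M_\liq=0$ via continuous extension) or is multiplied by a flux of the correct sign, so the diffusive contribution cannot compensate the strictly negative accumulation term. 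I would conclude that no strictly negative minimum can occur, hence $s_{\liq,\Du}^{n}\ge 0$ for all $\Du$, and symmetrically $s_{\gas,\Du}^{n}\ge 0$, which together with \eqref{def:saturation} gives $s_{\alpha,\Du}^{n}\in[0,1]$ and completes the induction.
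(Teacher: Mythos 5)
Your plan is correct and follows essentially the same route as the paper: induction in $n$, examination of the liquid equation at the dual volume where $s_{\liq,\cdot}^{n}$ attains its minimum, use of the upwind choice \eqref{notation:saturation_interface} together with the non-negativity of the transmissibilities \eqref{matricepositive} and the extension $M_\liq \equiv 0$ on $(-\infty,0]$ to give the diffusive term the favourable sign, the sign conditions on $f_P$, $f_I$, $s^I_\alpha$ and $\rho_m>0$ for the remaining terms, and the symmetric argument on the gas equation for the upper bound. The only cosmetic difference is that the paper implements this by multiplying the scheme by $-(s^{n}_{\liq,\Du})^-$ and invoking the monotonicity and consistency of the numerical flux $G_\liq$ to conclude $(s^{n}_{\liq,\Du})^-=0$ directly, whereas you phrase it as a contradiction at a strictly negative minimum.
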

\begin{proof}
  Let us show by induction in $n$ that for all $\Du \in \D_\h, ~
  s^n_{\alpha,\Du}\geq 0$ where $\alpha=\liq,\gas$. For $\alpha=\liq$, the
  claim is true for $n=0$ and for all $\Du \in \D_\h$. We argue by
  induction that for all $\Du \in \D_\h$, the claim is true up to
  order $n$. We consider the control volume $\Du$ such that
  $s^{n}_{\liq,\Du}=\min{\{s^{n}_{\liq,\Eu}\}}_{\Eu\in
    \D_\h}$ and we seek that $s^{n}_{\liq,\Du}\geq 0$.\\
  For the above mentioned purpose, multiply the equation in
  \eqref{eq:pl disc} by $-(s_{\liq,\Du}^{n})^-$, we obtain
  \begin{multline}\label{non-negative}
    - \abs{\Du}\phi_\Du\frac{\rho_\liq(p^{n}_{\liq,\Du})
      s^{n}_{\liq,\Du}-\rho_\liq(p^{n-1}_{\liq,\Du}) s^{n-1}_{\liq,\Du}}{\dt}
    (s_{\liq,\Du}^{n})^-  \\ - \sum_{\Eu \in \Ne(\Du) } \tokl 
    \rho^{n}_{\liq,\DE} \; \tensorDE \;
    G_\liq(s^{n}_{\liq,\Du},s^{n}_{\liq,\Eu};\de(p_{\liq})) (s_{\liq,\Du}^{n})^-
    \\ - \abs{\Du}
    \rho_\liq(p_{\liq,\Du}^{n}) s_{\liq,\Du}^{n}
    f_{P,\Du}^{n}(s_{\liq,\Du}^{n})^- = -\abs{\Du} \rho_\liq(p_{\liq,\Du}^{n})
    \sgi f_{I,\Du}^{n}(s_{\liq,\Du}^{n})^-\le 0.
\end{multline}
The numerical flux $G_\liq$ is nonincreasing with respect to
$s_{\liq,\Eu}^{n}$, and consistence, we get

\begin{align}\label{nonnegat:I:2}
  G_\liq(s^{n}_{\liq,\Du},s^{n}_{\liq,\Eu};\de(p_{\liq}))\,(s_{\liq,\Du}^{n})^-
  &\le G_\liq(s^{n}_{\liq,\Du},s^{n}_{\liq,\Du}
  ;\de(p_{\liq}))\,(s_{\liq,\Du}^{n})^-\notag\\
  &=-\de(p_{\liq})\,M_\liq(s^{n}_{\liq,\Du}) \,(s_{\liq,\Du}^{n})^- = 0.
\end{align}
Using the identity
$s_{\liq,\Du}^{n}=({s_{\liq,\Du}^{n}})^+-(s_{\liq,\Du}^{n})^-$,
and the mobility $M_\liq$ extended by zero on $]-\infty, 0]$, then
$M_\liq(s^{n}_{\liq,\Du}) (s_{\liq,\Du}^{n})^- = 0$ and
\begin{multline}\label{nonnegat:I:3}
   -\abs{\Du} \rho_\liq(p_{\liq,\Du}^{n})
  s_{\liq,\Du}^{n} f_{P,K}^{n}(s_{\liq,\Du}^{n})^-  = \abs{\Du} \rho_\liq(p_{\liq,\Du}^{n})
  f_{P,K}^{n}((s_{\liq,\Du}^{n})^-)^2 \geq 0.
\end{multline}
Then, we deduce from \eqref{non-negative} that
$$  
\rho_\liq(p^{n}_{\liq,\Du})|(s^{n}_{\liq,\Du})^{-}|^2
  +\rho_\liq(p^{n-1}_{\liq,\Du})s^{n-1}_{\liq,\Du}(s_{\liq,\Du}^{n})^-\leq
0,
$$
and from the nonnegativity of $s^{n-1}_{\liq,\Du}$, we obtain
$(s_{\liq,\Du}^{n})^-=0$.  This implies that $s_{\liq,\Du}^{n}\geq
0$ and
$$
0\le s^{n}_{\liq,\Du} \le s^{n}_{\liq,\Eu} \text{ for all } n \in [0,N-1]
\text{ and } \Eu \in \D_\h.
$$ 
In the same way, we prove $s_{\gas,\Du}^{n}\ge 0$.
\end{proof}
\subsection{Estimations on the pressures}
We now give a priori estimates satisfied by the solution values 
$p_\Du^n$, $\Du \in \D_\h$, $\{1,\ldots,N\}$.
\begin{prop}\label{prop:estimation_pression}
  Let $(p_{\liq,\Du}^n,p_{\gas,\Du}^n)$ be
  a solution of \eqref{cd:initial discret}-\eqref{eq:pc disc}. Then, there exists
  a constant $C>0$, which only depends on $M_\alpha$, $\Omega$, $T$,
  $p^{0}_\alpha$, $s^0_\alpha$, $s^I_\alpha$, $f_P$, $f_I$ and not on
  $\D_\h$, such that the solution of the combined scheme satisfies 
\begin{align}\label{est:p_alpha}
  \sm \tensorDE M_\alpha(s_{\alpha,\Du|\Eu}^{n})
  \abs{p_{\alpha,\Eu}^{n}-p_{\alpha,\Du}^{n}}^2\le C,
\end{align}
and
\begin{align}\label{est:p_lobale}
  \sn \dt  \norm{p_\h}_{\X_\h}^2\le C.
\end{align}
\end{prop}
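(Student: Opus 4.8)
The plan is to prove \eqref{est:p_alpha} by a discrete energy argument and then deduce \eqref{est:p_lobale} from it. The key device is to test each phase equation with the primitive of the inverse density: set $g_\alpha(p):=\int_0^p (\rho_\alpha(z))^{-1}\,\dd z$, so that $\rho_\alpha g_\alpha'\equiv 1$, multiply \eqref{eq:pl disc} by $\dt\, g_\liq(p_{\liq,\Du}^{n})$ and \eqref{eq:pg disc} by $\dt\, g_\gas(p_{\gas,\Du}^{n})$, sum over $\Du\in\D_\h$ and $n\in\{1,\dots,N\}$, and add the two identities.

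For the diffusion contribution I would use the symmetry of the stiffness matrix $\tensorDE$ together with the fact that the upwind mobility $M_\alpha(s_{\alpha,\DE}^{n})$ and the interface density $\rho^{n}_{\alpha,\DE}$ are symmetric in $(\Du,\Eu)$ (the upwind node coincides under the swap $\Du\leftrightarrow\Eu$), so that discrete summation by parts turns the flux term into $\tfrac12\sm \rho^{n}_{\alpha,\DE}\,M_\alpha(s_{\alpha,\DE}^{n})\,\tensorDE\,\de(p_\alpha)\,\de(g_\alpha(p_\alpha))$. The crucial point is the special mean \eqref{meanrho}: by construction $\de(g_\alpha(p_\alpha))=(\rho^{n}_{\alpha,\DE})^{-1}\de(p_\alpha)$, hence $\rho^{n}_{\alpha,\DE}\,\de(p_\alpha)\,\de(g_\alpha(p_\alpha))=(\de(p_\alpha))^2$ and the density cancels exactly, collapsing the diffusion part to one half the left-hand side of \eqref{est:p_alpha} (summed over $\alpha$). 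Under the nonnegativity of transmissibilities \eqref{matricepositive} each summand is nonnegative, so this is the \emph{velocity energy} we wish to bound.

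The main obstacle is the accumulation term $\sum_{n,\Du}\abs{\Du}\phi_\Du\big[(\rho_\liq s_\liq)^{n}-(\rho_\liq s_\liq)^{n-1}\big]g_\liq(p_\liq^{n})$ plus its gas analogue. Here I would introduce a nonnegative discrete energy density $\mathcal E(p_\liq,p_\gas)$ and establish a discrete convexity (``entropy'') inequality of the form $\big[(\rho_\alpha s_\alpha)^{n}-(\rho_\alpha s_\alpha)^{n-1}\big]g_\alpha(p_\alpha^{n})\ge \mathcal E_\alpha^{\,n}-\mathcal E_\alpha^{\,n-1}$, whose verification rests on the monotonicity of $\rho_\alpha$ in (A\ref{hyp:A5}), on the capillary relation \eqref{eq:pc disc} linking $s_\liq$ to $p_\gas-p_\liq$, and on the $L^\infty$ bound $0\le s_{\alpha,\Du}^{n}\le 1$ from Lemma \ref{lem:principe_maximum}. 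Summing over $n$ then telescopes and leaves only the initial energy $\mathcal E^{0}$, bounded by the data through (A\ref{hyp:A1}), (A\ref{hyp:A5}) and the boundedness of $p^{0}_\alpha$. I expect the construction of $\mathcal E$ and the proof of this increment inequality, which must cope with the two degeneracies of the system, to be the hardest step.

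It remains to control the source terms $\sm$-type sums $\sum_{n,\Du}\abs{\Du}\,\rho_\alpha(p_\alpha^{n})\,s\,f_{P,\Du}^{n}\,g_\alpha(p_\alpha^{n})$ and their injection counterparts. Since $\rho_m\le\rho_\alpha\le\rho_M$ one has $\abs{g_\alpha(p_\alpha)}\le \rho_m^{-1}\abs{p_\alpha}$, and because $\bar p$ and $\tilde p$ are bounded (the remark after (A\ref{hyp:A7})) the decomposition \eqref{def:pression_globale} gives $\abs{p_\alpha}\le\abs{p}+C$. Using Young's inequality, (A\ref{hyp:A4}) and a discrete Poincaré inequality for the nonconforming space (available on $\X_\h^0$ by the Dirichlet condition $p=0$ on $\Gamma_\liq$), these terms are bounded by $\varepsilon\sn\dt\norm{p_\h}_{\X_\h}^2+C_\varepsilon$. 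To close the loop I would combine \eqref{ineq:pression_globale} with the coercivity Lemma \ref{coercive}, which (using $\tensorDE\ge 0$) yields $m_0\,c_\tensor\sn\dt\norm{p_\h}_{\X_\h}^2\le \sm \tensorDE\big(M_\liq(s_{\liq,\DE}^{n})(\de(p_\liq))^2+M_\gas(s_{\gas,\DE}^{n})(\de(p_\gas))^2\big)$; substituting this into the source bound and choosing $\varepsilon$ small enough absorbs the $\norm{p_\h}_{\X_\h}^2$ term into the nonnegative velocity energy. This establishes \eqref{est:p_alpha}, and re-inserting it into the last display immediately gives \eqref{est:p_lobale}.
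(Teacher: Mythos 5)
Your proposal follows essentially the same route as the paper's proof: test the two equations with $g_\alpha(p_{\alpha,\Du}^{n})$, collapse the diffusion term to the velocity energy via conservativity and the special interface density \eqref{meanrho} (this is exactly the paper's identity \eqref{choice_of_density}), bound the source terms through the sublinearity of $g_\alpha$, the global-pressure decomposition and the discrete Poincar\'e--Friedrichs inequality, and close by combining \eqref{ineq:pression_globale} with Lemma \ref{coercive} under \eqref{matricepositive}. The one imprecision is that the telescoping ``entropy'' inequality cannot hold phase by phase as you wrote it (the increments $s_\alpha^{n}-s_\alpha^{n-1}$ of the two phases must cancel against each other through the capillary relation); the paper uses the coupled two-phase inequality \eqref{bibi12}, with joint energy $\mathcal{A}_\liq(p_\liq)s_\liq+\mathcal{A}_\gas(p_\gas)s_\gas-\pc(s_\liq)$ built from the concavity of $g_\alpha$ and $\pc$, which it quotes from \cite{ZS10} rather than reproving.
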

\begin{proof} We define the function $
  \mathcal{A}_{\alpha}(p_{\alpha}) :=
  \rho_{\alpha}(p_{\alpha})g_{\alpha}(p_{\alpha}) - p_{\alpha},$ $
  \pc(s_\liq) := \int_{0}^{s_\liq}p_c(z)\dd z $ and $
  g_{\alpha}(p_{\alpha}) =
  \int_{0}^{p_{\alpha}}\frac{1}{\rho_{\alpha}(z)}\dd z$. In the
  following proof, we denote by $C_i$ various real values which 
  independent on $\D$ and $n$.
  %only depend on $M_\alpha$, $\Omega$, $T$, $p^{0}_\alpha$, $s^0_\alpha$,
  %$s^I_\alpha$, $f_P$, $f_I$ and not on $\D$. 
  To prove the estimate
  \eqref{est:p_alpha}, we multiply\eqref{eq:pl disc} and
  \eqref{eq:pg disc} respectively by $g_\liq(p_{\liq,\Du}^{n})$, 
  $g_\gas(p_{\gas,\Du}^{n})$ and adding them, then summing the resulting  
  equation over $\Du \in \D_\h$ and $n \in \{1,\cdots, N\}$.  
  We thus get:
  \begin{equation}\label{disc_estimation}
    E_{1}+E_{2}+E_{3}= 0,
  \end{equation}
  where
  \begin{align*}
    E_{1} = \sn \sum_{\Du \in \D_\h} \abs{\Du}\phi_\Du \Big(
    (\rho_\liq(p^{n}_{\liq,\Du})s^{n}_{\liq,\Du}
    -\rho_\liq(p^{n-1}_{\liq,\Du})s^{n-1}_{\liq,\Du})\; g_\liq(p^{n}_{\liq,\Du}) \\
    + (\rho_\gas(p^{n}_{\gas,\Du})s^{n}_{\gas,\Du}
    -\rho_\gas(p^{n-1}_{\gas,\Du})s^{n-1}_{\gas,\Du})\; g_\gas(p^{n}_{\gas,\Du})
    \Big),
  \end{align*}
  \begin{align*}
    E_{2} = \sn\pas \sum_{\Du \in \D_\h} \sum_{\Eu \in \Ne(\D)} \tensorDEn
    \Big(\rho^{n}_{\liq,\DE}
    G_\liq(s^{n}_{\liq,\Du},s^{n}_{\liq,\Eu};\de(p_{\liq}))\;
    g_\liq(p_{\liq,\Du}^{n})\\
    + G_\gas(s^{n}_{\gas,\Du},s^{n}_{\gas,\Eu};\de(p_{\gas}))\;
    g_\gas(p_{\gas,\Du}^{n})
    \Big),
  \end{align*}
  \begin{align*}
    E_3 = \sn\pas \sum_{\Du \in \D_\h}\abs{\Du}
    \Big(\rho_\liq(p_{\liq,\Du}^{n}) s_{\liq,\Du}^{n} f_{P,\Du}^{n}
    g_\liq(p_{\liq,\Du}^{n}) - \rho_\liq(p_{\liq,\Du}^{n}) \sgi f_{I,\Du}^{n}
    g_\liq(p_{\liq,\Du}^{n}) \\ 
    + \rho_\gas(p_{\gas,\Du}^{n})
    s_{\gas,\Du}^{n} f_{P,\Du}^{n} g_\gas(p_{\gas,\Du}^{n}) -
    \rho_\gas(p_{\gas,\Du}^{n}) \sgi f_{I,\Du}^{n} g_\gas(p_{\gas,\Du}^{n})
    \Big).
  \end{align*}
To handle the first term of the equality \eqref{disc_estimation}, let us recall the following inequality : 
\begin{multline}
  \label{bibi12}
  \bigl(\rho_\liq(p_\liq^n)s_\liq^n-\rho_\liq(p_{\liq}^{n-1})
  s^{n-1}_{\liq}\bigr)g_\liq(p_\liq^n)+
  \bigl(\rho_\gas(p_\gas^n)s_\gas^n-\rho_\gas(p_\gas^{n-1})
  s^{n-1}_\gas\bigr)g_\gas(p_\gas^n) \\
  \ge\mathcal{A}_\liq(p_\liq^{n})s_\liq^{n}-\mathcal{A}_\liq(p_\liq^{n-1})s^{n-1}_\liq +
  \mathcal{A}_\gas(p_\gas^{n})s_\gas^{n}-\mathcal{A}_\gas(p_\gas^{n-1})s^{n-1}_\gas -
  \pc(s_\liq^{n})+\pc(s_\liq^{n-1}),
\end{multline}
using  the concavity property
of $g_\alpha$ and    $\pc$, in \cite{ZS10} the authors prove the above inequality.\\
So, this yields to 
\begin{multline}\label{E1}
  E_1 \ge \sum_{\Du \in \D_\h} 
      \phi_\Du \abs{\Du} \Big(s_{\liq,\Du}^N \mathcal{A}(p_{\liq,\Du}^N) 
                        -  s_{\liq,\Du}^0 \mathcal{A}(p_{\liq,\Du}^0)
                        +  s_{\gas,\Du}^N \mathcal{A}(p_{\gas,\Du}^N) 
                        -  s_{\gas,\Du}^0 \mathcal{A}(p_{\gas,\Du}^0)
                    \Big) \\  
 - \sum_{\Du\in \D_\h}\phi_\Du \abs{\Du} \pc(s_{\liq,\Du}^N) 
 + \sum_{\Du \in \D_\h} \phi_\Du \abs{\Du}\pc(s_{\liq,\Du}^0).
\end{multline}
Using the fact that the numerical fluxes $G_\liq$ and $G_\gas$ are
conservative, we obtain by
discrete integration by parts 
\begin{align*}
  E_2 = \frac{1}{2}\sn\pas \sum_{\Du\in \D_\h} \sum_{\Eu \in \Ne(\Du)} \tensorDEn  
        \Big(& 
              \rho_{\liq,\DE}^{n} G_\liq(s^{n}_{\liq,\Du},s^{n}_{\liq,\Eu};\de(p_{\liq})) 
                                   (g_\liq(p_{\liq,\Du}^{n})-g_\liq(p_{\liq,\Eu}^{n})) \\ &
           +  \rho_{\gas,\DE}^{n} G_\gas(s^{n}_{\gas,\Du},s^{n}_{\gas,\Eu};\de(p_{\gas}))            
                                   (g_\gas(p_{\gas,\Du}^{n})-g_\gas(p_{\gas,\Eu}^{n}))
        \Big),
\end{align*}
and due to the correct choice of the density of the phase 
$\alpha$ on each interface,
\begin{align}\label{choice_of_density}
  \rho_{\alpha,\DE}^{n} (g_{\alpha}(p_{\alpha,\Du}^{n})-g_{\alpha}(p_{\alpha,\Eu}^{n}))=
  p_{\alpha,\Du}^{n}-p_{\alpha,\Eu}^{n},
\end{align}
we obtain 
\begin{align*}
  E_2 = \frac{1}{2}\sn\pas \sum_{\Du \in \D_\h} \sum_{\Eu \in \Ne(\Du)} &\tensorDEn 
        \Big( 
             G_\liq(s^{n}_{\liq,\Du},s^{n}_{\liq,\Eu};\de(p_{\liq}))
               (p_{\liq,\Du}^{n}-p_{\liq,\Eu}^{n})\\ & +
             G_\gas(s^{n}_{\gas,\Du},s^{n}_{\gas,\Eu};\de(p_{\gas}))
               (p_{\gas,\Du}^{n}-p_{\gas,\Eu}^{n}) 
        \Big).
\end{align*}
The definition of the upwind fluxes in \eqref{Gupwind} implies 
\begin{multline*}  
  G_\liq(s^{n}_{\liq,\Du},s^{n}_{\liq,\Eu};\de(p_{\liq}))
        (p_{\liq,\Du}^{n}-p_{\liq,\Eu}^{n})
     +
  G_\gas(s^{n}_{\gas,\Du},s^{n}_{\gas,\Eu};\de(p_{\gas}))
        (p_{\gas,\Du}^{n}-p_{\gas,\Eu}^{n})
\\ =   M_{\liq}(s_{\liq,\DE}^{n})(\de(p_{\liq}))^2 
     +
       M_{\gas}(s_{\gas,\DE}^{n})(\de(p_{\gas}))^2.
\end{multline*}
Then, we obtain the following equality
\begin{align}\label{E2}
  E_2 = \frac{1}{2}\sn\pas \sum_{\Du \in \D_\h} \sum_{\Eu \in N(\D)} \tensorDEn 
        \Big( 
             M_{\liq}(s_{\liq,\DE}^{n})(\de(p_\liq))^2 +
             M_{\gas}(s_{\gas,\DE}^{n})(\de(p_\gas))^2
        \Big).
\end{align}
In order to estimate $E_3$, using the fact that the densities are
bounded and the map $g_\alpha$ is sublinear $(\text{a.e.}|g(p_\alpha)|\le
C |p_\alpha|)$, we have
\begin{equation*}
  \abs{E_3} \le C_1\sn\pas \sum_{\Du \in \D_\h} \abs{\Du} (f_{P,\Du}^{n}+f_{I,\Du}^{n})
  (|p_{\liq,\Du}^{n}|+|p_{\gas,\Du}^{n}|),
\end{equation*}
then
\begin{equation*}
  \abs{E_3} \le C_1\sn\pas \sum_{\Du \in \D_\h} \abs{\Du}(f_{P,\Du}^{n}+f_{I,\Du}^{n})
  (2|p_{\Du}^{n}|+|\bar{p}_{\Du}^{n}|+|\tilde{p}_{\Du}^{n}|).
\end{equation*}
Hence, by the H\"older inequality, we get that
\begin{equation*}
  \abs{E_4} \leq C_2\norm{f_P+f_I}_{L^2(Q_T)} 
  \big(
      \sum_{n=1}^{N}\pas \norm{p^{n}_h}_{L^2({\Omega})}^2
      \big)^{\frac{1}{2}},
\end{equation*}
and, from the discrete Poincar\'e--Friedrichs inequality \cite{Vohralik},
we get
\begin{equation}\label{E4}
  \abs{E_{4}} \leq C_3 \big( \sum_{n=1}^{N}\pas
  \norm{p^{n}_h}_{\X_h}^2\big)^{\frac{1}{2}}+C_4.
\end{equation}
The equality \eqref{disc_estimation} with the inequalities \eqref{E1},
\eqref{E2}, \eqref{E4} give \eqref{est:p_alpha}. Then we
deduce \eqref{est:p_lobale} from \eqref{ineq:pression_globale} and Lemma \ref{coercive}.
\end{proof}

We now state the following corollary, which is essential for the
compactness and limit study.
\begin{cor}\label{cor:est} From the previous Proposition, we deduce
  the following estimations:
  \begin{align}
    &\sn \dt \norm{\B(s_{\liq,h}^n)}_{\X_\h}^2 \le C,\label{est:discrete_beta}\\ &
    \sm  \tensorDE M_{\liq,\Du|\Eu}^{n}(\de(\bar{p}(s_\liq)))^2 \le C,
    \label{est:discrete_pbar}
    \end{align}
    and
\begin{align}
  \sm \tensorDE M_{\gas,\Du|\Eu}^{n}(\de(\tilde{p}(s_\liq)))^2 \le
  C.\label{est:discrete_ptilde}
  \end{align}
\end{cor}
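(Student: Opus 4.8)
The plan is to derive each of the three estimates directly from the corresponding inequality in Lemma~\ref{lemma:preliminary_step} combined with the pressure estimate \eqref{est:p_alpha} of Proposition~\ref{prop:estimation_pression}. The strategy is entirely mechanical: multiply the pointwise inequalities \eqref{ineq:discrete_beta}, \eqref{ineq:discrete_pbar}, \eqref{ineq:discrete_ptilde} by the nonnegative transmissibility $\tensorDE$ and the time step $\dt$, then sum over $n\in\{1,\dots,N\}$, over $\Du\in\D_\h$, and over $\Eu\in\Ne(\Du)$. In each case the right-hand side becomes exactly the quantity controlled by \eqref{est:p_alpha}, summed over both phases $\alpha=\liq,\gas$, and is therefore bounded by $2C$. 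To keep the signs correct one must invoke the special case \eqref{matricepositive}, i.e.\ $\tensorDE\ge 0$, so that multiplying the inequalities by $\tensorDE$ preserves their direction.

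Concretely, for \eqref{est:discrete_pbar} I would write
\begin{align*}
  \sm \tensorDE M_{\liq,\Du|\Eu}^{n}(\de(\bar{p}(s_\liq)))^2
  &\le \sm \tensorDE \Big( M_{\liq,\Du|\Eu}^{n}(\de(p_\liq))^2 + M_{\gas,\Du|\Eu}^{n}(\de(p_\gas))^2 \Big) \le C,
\end{align*}
where the first inequality is \eqref{ineq:discrete_pbar} multiplied by $\dt\,\tensorDE\ge 0$ and summed, and the second is \eqref{est:p_alpha} applied for both $\alpha=\liq$ and $\alpha=\gas$ (absorbing the factor $2$ into the constant $C$). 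The estimate \eqref{est:discrete_ptilde} is obtained identically, starting from \eqref{ineq:discrete_ptilde} in place of \eqref{ineq:discrete_pbar}.

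For \eqref{est:discrete_beta} the argument is similar but requires one extra identification step. Starting from \eqref{ineq:discrete_beta}, the same multiplication and summation yield
\begin{align*}
  \sm \tensorDE (\de(\B(s_\liq)))^2 \le C.
\end{align*}
It then remains to recognize that the left-hand side is precisely $\sn\dt\,\norm{\B(s_{\liq,h}^n)}_{\X_\h}^2$. This is the only nontrivial point: one uses the computation in the proof of Lemma~\ref{coercive}, which shows that for any $u_\h=\sum_{\Du}u_\Du\va_\Du$ the double sum $\sum_{\Du}\sum_{\Eu\in\Ne(\Du)}\tensorDE(\desn u)^2$ equals $2\sum_{\K}(\tensor\nabla u_\h,\nabla u_\h)_{0,\K}$, which is comparable to $\norm{u_\h}_{\X_\h}^2$ via assumption (A\ref{hyp:A2}). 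Applying this with $u_\Du=\B(s_{\liq,\Du}^n)$ at each time level rewrites the double sum in terms of the seminorm, giving \eqref{est:discrete_beta}.

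The main obstacle, such as it is, lies in this last identification for \eqref{est:discrete_beta}: one must be careful that $\norm{\cdot}_{\X_\h}$ is the finite element seminorm of the piecewise-linear reconstruction $\B(s_{\liq,h}^n)=\sum_{\Du}\B(s_{\liq,\Du}^n)\va_\Du$, so the passage from the transmissibility double sum to the seminorm is the content of Lemma~\ref{coercive} (read as an equality up to the coercivity constant $c_\tensor$) rather than a mere relabeling. Everything else is a routine multiply-and-sum argument using the positivity \eqref{matricepositive} and the bound \eqref{est:p_alpha}.
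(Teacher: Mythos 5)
Your proposal is correct and follows essentially the same route as the paper: the paper's own proof is a one-line citation of exactly the ingredients you use, namely the pointwise inequalities \eqref{ineq:discrete_beta}, \eqref{ineq:discrete_pbar}, \eqref{ineq:discrete_ptilde}, Lemma~\ref{coercive}, and Proposition~\ref{prop:estimation_pression}, and your multiply-by-$\dt\,\tensorDE$-and-sum argument, with Lemma~\ref{coercive} converting the transmissibility double sum into the $\X_\h$ seminorm for \eqref{est:discrete_beta}, is precisely what that citation leaves implicit. The only point you make explicit beyond the paper is the appeal to the sign condition \eqref{matricepositive} to preserve the direction of the inequalities under multiplication by $\tensorDE$, which the paper indeed assumes in this part of the analysis.
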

\begin{proof}
  The prove of the estimates \eqref{est:discrete_beta},
  \eqref{est:discrete_pbar} and \eqref{est:discrete_ptilde} are a
  direct consequence of the inequality \eqref{ineq:discrete_beta},
  \eqref{ineq:discrete_pbar}, \eqref{ineq:discrete_ptilde}, the Lemma \ref{coercive} and the
  Proposition \ref{prop:estimation_pression}.
\end{proof}

%%%%%%%%%%%%%%%%%%%%%%%%%%%%%%%%%%%%%%%%%%%
\subsection{Existence of the finite volume scheme}\label{sec:existence}
 
%
%\begin{lemma}\label{lem:exist-classic}(\cite{evans:book}, p. 529)
 % Assume the continuous function $v : \R^n \rightarrow \R^n$ satisfies 
%  $$
%  v(z)\cdot z \ge 0 \text{ if } \|z\| = r,
 % $$
 % for some $r>0$. Then there exists a point $z$ with $\|z\|\le r$ such that 
%  $$
%  v(z)=0.
%  $$
%\end{lemma}

%
%
%
%
\begin{prop}
\label{prop:existance}
%Let $\mathcal{D}$ be an admissible discretization of $Q_{T}$. Then t
The problem \eqref{eq:pl disc}-\eqref{eq:pg disc} admits at least one solution
$(p^{n}_{\liq,\Du},p^{n}_{\gas,\Du})_{(\Du ,n) \in \D_\h \times \{1,\cdots,N\}}$.
\end{prop}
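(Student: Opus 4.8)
The plan is to prove existence at each time level by a topological (Brouwer) degree argument, since for fixed $n$ the scheme is a finite system of nonlinear algebraic equations. First I would argue by induction on the time level $n$: the values at $n=0$ are fixed by \eqref{cd:initial discret}, and assuming that $(p^{n-1}_{\alpha,\Du})_{\Du\in\D_\h}$ is known with associated saturations in $[0,1]$, it suffices to construct the level-$n$ values. For fixed $n$ the unknown is the finite vector $P=(p^{n}_{\liq,\Du},p^{n}_{\gas,\Du})_{\Du\in\D_\h}\in\R^{M}$ of nodal pressures (with the Dirichlet values on $\Gamma_\liq$ fixed to zero), and I would rewrite \eqref{eq:pl disc}--\eqref{eq:pg disc} as $\mathcal F(P)=0$, the two components attached to each $\Du$ being the residuals of the water and gas equations. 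The capillary law \eqref{eq:pc disc} is used to eliminate the saturation, $s^{n}_{\liq,\Du}=p_c^{-1}(p^{n}_{\gas,\Du}-p^{n}_{\liq,\Du})$ and $s^{n}_{\gas,\Du}=1-s^{n}_{\liq,\Du}$, where $p_c^{-1}$, $M_\liq$ and $M_\gas$ are extended by constants outside $[0,1]$ exactly as in the proof of Lemma \ref{lem:principe_maximum}. With this convention and the mean-density formula \eqref{meanrho} (whose apparent singularity is removable since $1/\rho_\alpha$ is continuous), the map $\mathcal F$ is continuous on all of $\R^{M}$.

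Next I would introduce a homotopy $H(\cdot,\lambda)$, $\lambda\in[0,1]$, that keeps the accumulation terms unchanged and multiplies the diffusion and source contributions by $\lambda$, so that $H(\cdot,1)=\mathcal F$ while $H(\cdot,0)$ decouples over the dual volumes into $\rho_\alpha(p^{n}_{\alpha,\Du})\,s^{n}_{\alpha,\Du}=\rho_\alpha(p^{n-1}_{\alpha,\Du})\,s^{n-1}_{\alpha,\Du}$. Using that each $\rho_\alpha$ is increasing by ({A}\ref{hyp:A5}) and that $p_c$ is strictly decreasing by ({A}\ref{hyp:A6}), a short computation shows that the Jacobian of this per-volume reference map has strictly positive determinant; hence $H(\cdot,0)$ is a proper local diffeomorphism with a unique zero and Brouwer degree $+1$. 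The decisive step, and the main obstacle, is to obtain an a priori bound on the zeros of $H(\cdot,\lambda)$ that is \emph{uniform} in $\lambda\in[0,1]$, so that the degree is homotopy invariant. Here the degeneracy is the real difficulty: because the mobilities $M_\alpha$ vanish where a phase disappears, the individual phase pressures $p^{n}_{\alpha,\Du}$ cannot be controlled directly.

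I would overcome this exactly as in the a priori analysis already established, checking that each ingredient survives the homotopy. The maximum-principle argument of Lemma \ref{lem:principe_maximum} applies verbatim to the $\lambda$-scaled fluxes, since the scaling preserves their monotonicity and consistency; this gives $s^{n}_{\alpha,\Du}\in[0,1]$ for every zero and every $\lambda$. Then, repeating the energy computation of Proposition \ref{prop:estimation_pression} --- which relies only on the conservativity of the fluxes, the density choice \eqref{choice_of_density}, and the sign of the source terms, all unaffected by the homotopy --- yields a bound on the global-pressure seminorm $\norm{p^{n}_h}_{\X_\h}$ through \eqref{ineq:pression_globale}, Lemma \ref{coercive} and \eqref{est:p_lobale}, uniformly in $\lambda$. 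Since the artificial pressures $\tilde p$ and $\bar p$ are bounded on $[0,1]$ and the mesh and time step are fixed, the identities \eqref{def:pression_globale} convert this control of the global pressure into a uniform bound on each nodal value $p^{n}_{\liq,\Du}$ and $p^{n}_{\gas,\Du}$: on the finite-dimensional, boundary-vanishing space the seminorm is a norm, so the global-pressure degrees of freedom are controlled, and the capillary corrections are bounded.

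Consequently all zeros of $H(\cdot,\lambda)$ lie in a fixed ball $B_R\subset\R^{M}$ with $R$ independent of $\lambda$, so no zero meets $\partial B_R$ and $\deg(H(\cdot,\lambda),B_R,0)$ is independent of $\lambda$. It equals $+1$ at $\lambda=0$ by the Jacobian computation above, hence is nonzero at $\lambda=1$. Therefore $\mathcal F$ has a zero in $B_R$, which provides a solution of \eqref{eq:pl disc}--\eqref{eq:pg disc} at time level $n$, and the induction on $n$ closes the proof.
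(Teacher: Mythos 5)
Your overall strategy (work at a fixed time level on the fixed finite-dimensional system, eliminate the saturations through $p_c^{-1}$ with constant extensions, and feed the maximum principle and the energy estimate into a topological argument) is in the same family as the paper's proof, which invokes the characterization of zeros of a vector field from \cite{evans:book} as used in \cite{ZS11fv} and \cite{saadsaadFV}. However, your particular homotopy breaks down at both of its hinges, and it breaks precisely because of the degeneracy you yourself flag. First, the reference map $H(\cdot,0)$ does not have a unique zero and its Jacobian is not positive in the degenerate regions. If some dual volume carries $s^{n-1}_{\liq,\Du}=0$ (pure gas, which the maximum principle allows and which the data may impose), the $\lambda=0$ liquid equation reduces to $\rho_\liq(p^{n}_{\liq,\Du})\,s^{n}_{\liq,\Du}=0$, i.e. $s^{n}_{\liq,\Du}=0$; with $p_c^{-1}$ extended by constants this holds on the whole half-plane $p^{n}_{\gas,\Du}-p^{n}_{\liq,\Du}\ge p_c(0)$, so the zero set of $H(\cdot,0)$ contains an unbounded ray $\{(p_\liq,p^{n-1}_{\gas,\Du}):p_\liq\le p^{n-1}_{\gas,\Du}-p_c(0)\}$ and the map is constant (Jacobian identically zero) there. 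Hence ``proper local diffeomorphism with a unique zero and degree $+1$'' is false, and every sufficiently large ball has zeros on its boundary. Second, the uniform-in-$\lambda$ bound is not delivered by the energy estimate, contrary to your claim that the computation is ``unaffected by the homotopy'': scaling the diffusion by $\lambda$ scales the \emph{only coercive term}. Pairing the residuals of a zero of $H(\cdot,\lambda)$ with $g_\alpha(p^n_{\alpha,\Du})$ gives $E_1+\lambda E_2+\lambda E_3=0$, where $E_1$ is merely bounded below (since $\mathcal{A}_\alpha\ge 0$ and $\pc$ is bounded on $[0,1]$), $E_2\ge 0$ is the dissipation that controls $\norm{p^n_\h}_{\X_\h}^2$, and $\abs{E_3}\le C\bigl(1+\norm{p^n_\h}_{\X_\h}\bigr)$. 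This yields only $\norm{p^n_\h}_{\X_\h}^2\le C\bigl(1+1/\lambda\bigr)$, which blows up as $\lambda\to 0$ --- consistent with the unbounded zero set found at $\lambda=0$. So zeros genuinely escape to infinity along your homotopy and homotopy invariance of the degree cannot be invoked.

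The route taken in the paper (through \cite{evans:book}, \cite{ZS11fv}, \cite{saadsaadFV}) avoids both problems: one applies the lemma asserting that a continuous field $v$ on $\R^M$ with $v(\xi)\cdot\xi\ge 0$ for $\abs{\xi}=r$ has a zero in the closed ball, taking as unknowns $\xi_{\alpha,\Du}=g_\alpha(p^{n}_{\alpha,\Du})$ (a bi-Lipschitz change of variables since $\rho_m\le\rho_\alpha\le\rho_M$). Then $v(\xi)\cdot\xi$ is exactly $E_1+E_2+E_3$ at full strength, and the level-$n$ energy estimate, the coercivity of Lemma \ref{coercive} combined with \eqref{ineq:pression_globale}, the discrete Poincar\'e inequality, and the fact that $\norm{\cdot}_{\X_\h}$ is a norm on the boundary-vanishing space show this is nonnegative on a sphere of large radius. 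This one-shot argument requires no reference problem, no homotopy, and therefore no non-degeneracy --- which is exactly why it suits this degenerate system. If you wish to retain a homotopy, you must deform within a family that keeps the dissipation at full strength (for instance, regularize the mobilities to $M_\alpha+\epsilon$ and pass to the limit $\epsilon\to 0$ using the a priori estimates), rather than scaling the diffusion to zero.
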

The proof is based on  a technical assertion to characterize the zeros of a vector field which stated and proved in \cite{evans:book}.  This method is used in \cite{ZS11fv} and \cite{saadsaadFV}, so it is easy to adopt their proof in our case, thus we omit it.

\section{Compactness properties}\label{sec:compacity}
In this section we derive estimates on differences of space and time
translates of the function $\U_{\alpha,\hdt} = \phi \rho_\alpha(p_{\alpha,\hdt}) s_{\alpha,\hdt}$
which imply that the sequence $\phi \rho_\alpha(p_{\alpha,\hdt})s_{\alpha,\hdt}$
is relatively compact in $L^1(\Qt)$.

%\begin{definition} \label{approximationsolution}
%	Let the values $u_\Du^n$, $\Du \in \D_\h$, $n \in \{0,1, \cdots, N\}$, be the solutions to 
%	\eqref{cd:initial discret}-\eqref{eq:pc disc}. As the approximate solutions of the problem 
%	\eqref{eq:principal}-\eqref{def:pression_capillaire.} by means of the combined finite volume--nonconforming 
%	finite element scheme, we understand: 
%	\begin{enumerate}
%		\item A function $u_{\dt,\h}$ such that  
%		\begin{align}\label{sol:FE}
%			& u_{\dt,\h} (x,0) = u_\h^0(x) \text{ for } x \in \Omega, \notag \\ 
%			& u_{\dt,\h} (x,t) = u_\h^n(x) \text{ for } x \in \Omega, t \in (t_{n-1},t_n] \qquad n \in \{1, \cdots, N\} ,
%		\end{align}
%		where $u_\h^n = \sum_{\Du \in \D_\h} u_\Du^n \va_\Du$;
%	         %	
%		%
%		\item A function $\tilde{u}_{\dt,\h} $ such that 
%		\begin{align}\label{sol:FV}
%			& \tilde{u}_{\dt,\h}(x,0) = u_\Du^0 \text{ for }  x \in  \mathring{\Du}, \Du \in \D_\h \notag \\ 
%		  	& \tilde{u}_{\dt,\h}(x,t) = u_\Du^n \text{ for }  x \in  \mathring{\Du}, \Du \in \D_\h, t \in (t_{n-1},t_n] \qquad n \in \{1, \cdots, N\}.
%		\end{align}		
%	\end{enumerate}		
%\end{definition}

%The function $u_{\dt,\h}$ is piecewise linear and continuous in the barycentres of the interior sides
%in space and piecewise constant in time; we will call it a \emph{nonconforming finite element solution}. The 
%function $\tilde{u}_{\dt,\h}$ is given by the values of $u_{\dt,\h}$ in side barycentres and is piecewise constant
%on the dual volumes in space and piecewise  constant  in time; we will call it a \emph{finite volume solution.} 

The following important relation between $\U_{\dt,\h}$ and $\tilde{\U}_{\dt,\h}$ (see definition \ref{approximationsolution})  is valid:

\begin{lemma} \label{relationsolution} 
                          $\left(
                                    \text{Relation between } \U_{\alpha, \dt,\h} \text{ and }  \tilde{\U}_{\alpha, \dt,\h}
                                    \right).$
	There holds 
	$$
		\norm{\U_{\alpha,\dt,\h} - \tilde{\U}_{\alpha,\dt,\h}}_{L^1(\Qt)} \tend 0 \text{ as } \h \to 0.
	$$
\end{lemma}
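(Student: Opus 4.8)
The plan is to compare the two approximations locally on the pieces where the finite element solution $\U_{\alpha,\dt,\h}$ and the finite volume solution $\tilde\U_{\alpha,\dt,\h}$ differ, and to show the discrepancy is controlled by the jumps $\de(\cdot)$ of the underlying quantities across dual interfaces. Since both functions share the same nodal values $u_\Du^n = \phi_\Du\rho_\alpha(p^n_{\alpha,\Du})s^n_{\alpha,\Du}$ at the barycentres $\Q_\Du$, the difference $\U_{\alpha,\dt,\h} - \tilde\U_{\alpha,\dt,\h}$ vanishes at every $\Q_\Du$; the finite volume solution is piecewise constant equal to that nodal value on each dual cell, while the finite element solution interpolates linearly between neighbouring nodal values. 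First I would fix a time slab $(t^{n-1},t^n]$ and a simplex $\K \in \T_\h$, and on $\K$ write $\U_{\alpha,\dt,\h}$ as the linear interpolant of the values $u^n_\Du$ attached to the sides $\sigma_\Du \in \E_\K$, so that on $\K$ the oscillation of the linear function is bounded by the maximal difference $|u^n_\Eu - u^n_\Du|$ over pairs of sides of $\K$.

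The key step is the pointwise estimate on each simplex:
\begin{equation*}
  \int_\K \abs{\U_{\alpha,\dt,\h} - \tilde\U_{\alpha,\dt,\h}}\,\dd x
  \le C\,\abs{\K}\max_{\sigma_\Du,\sigma_\Eu \in \E_\K} \abs{u^n_\Eu - u^n_\Du},
\end{equation*}
which follows because the difference of a linear function and the piecewise-constant reconstruction over the $\dm+1$ dual pieces of $\K$ (recall $\abs{\K \cap \Du} = \abs{\K}/(\dm+1)$) is bounded by the spread of the nodal values. Summing over $\K \in \T_\h$, multiplying by $\dt$ and summing over $n$, I obtain
\begin{equation*}
  \norm{\U_{\alpha,\dt,\h} - \tilde\U_{\alpha,\dt,\h}}_{L^1(\Qt)}
  \le C \sn \dt \sk \abs{\K}\max_{\sigma_\Du,\sigma_\Eu \in \E_\K} \abs{u^n_\Eu - u^n_\Du}.
\end{equation*}
The next step is to bound the nodal differences by differences of the pressures and saturations: since $\rho_\alpha$ is Lipschitz (it is $\mathcal{C}^1$ and bounded by (A\ref{hyp:A5})) and $\phi$ is bounded, one controls $\abs{u^n_\Eu - u^n_\Du}$ by $\abs{\de(p_\alpha)}$ together with a saturation difference; the latter is handled through the capillary term, using $\abs{\de(s_\liq)} \le c\,\abs{\de(\B(s_\liq))}^{\theta}$ from the H\"older continuity of $\B^{-1}$ in (A\ref{hyp:A7}).

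The main obstacle, as usual in this type of lemma, will be controlling the sum of nodal differences by the a priori estimates, which are stated with the transmissibility weights $\tensorDE$ and the mobilities $M_\alpha(s^n_{\alpha,\Du|\Eu})$ rather than with bare geometric weights. The strategy is to insert the factor $\dis/\dis$ and use inequality \eqref{ineq2} to pass from the geometric side-length weights $\abs{\sig}/\dis$ to the seminorm $\norm{\cdot}_{\X_\h}$, after a Cauchy--Schwarz splitting that separates the (uncontrolled) bare differences from the mesh-size factor that must produce the vanishing as $\h \to 0$. Concretely, a factor of $\diam(\K) \le \h/\KT$ emerges from the shape regularity \eqref{eq:regularity mesh} and gives the decay; the price is a Cauchy--Schwarz application whose square-summable factor is bounded by \eqref{est:p_lobale} and \eqref{est:discrete_beta} from Proposition \ref{prop:estimation_pression} and Corollary \ref{cor:est}. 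Assembling these, the right-hand side is of order $\h^{\sigma}$ for some positive exponent (degraded by the H\"older power $\theta$ on the saturation part), which yields the claimed convergence to zero as $\h \to 0$.
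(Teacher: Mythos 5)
Your skeleton (localize on each simplex, use that the finite volume value on $\Du$ is the finite element value at $\Q_\Du$, bound the local discrepancy by nodal oscillations, and convert the resulting sums into the a priori seminorm bounds at the price of a power of $\h$) is the same as the paper's, but two of your steps do not hold as written. First, your ``key step'' is stated for the wrong object. The paper defines $\U_{\alpha,\hdt}=\phi\,\rho_\alpha(p_{\alpha,\hdt})\,s_{\alpha,\hdt}$, a product of nonlinear functions of the piecewise linear solutions; it is \emph{not} the element of $\X_\h$ interpolating the nodal values $u^n_\Du=\phi_\Du\rho_\alpha(p^n_{\alpha,\Du})s^n_{\alpha,\Du}$. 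For such a product, the oscillation on $\K$ is not controlled by the spread of its nodal values: $\rho_\alpha(p_{\liq,\hdt})s_{\liq,\hdt}$ can take the same value at all $\dm+1$ side barycentres of $\K$ (variations of $p_{\liq,\hdt}$ and $s_{\liq,\hdt}$ compensating each other) while oscillating inside $\K$ by an amount of the order of the oscillations of $p_{\liq,\hdt}$ and $s_{\liq,\hdt}$ separately; so your inequality $\int_\K\abs{\U_{\alpha,\dt,\h}-\tilde\U_{\alpha,\dt,\h}}\,\dd x\le C\abs{\K}\max\abs{u^n_\Eu-u^n_\Du}$ is false. The fix is to split \emph{before} localizing, as the paper does in its first line: $\abs{\U_{\liq,\hdt}-\tilde\U_{\liq,\hdt}}\le \rho_M\abs{s_{\liq,\hdt}-\tilde s_{\liq,\hdt}}+\abs{\rho_\liq(p_{\liq,\hdt})-\rho_\liq(\tilde p_{\liq,\hdt})}$, and then run the gradient-times-diameter argument on each piece.

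Second, and more seriously, your control of the nodal differences goes through $\abs{\de(p_\alpha)}$, the \emph{phase} pressure differences, which no a priori estimate of the paper bounds: \eqref{est:p_alpha} carries the degenerate weight $M_\alpha(s_{\alpha,\Du|\Eu}^{n})$, which vanishes where the phase is absent, and \eqref{est:p_lobale} bounds only the \emph{global} pressure seminorm $\sn\dt\norm{p_\h}^2_{\X_\h}$. This is precisely the degeneracy the whole paper is organized around, and your closing paragraph (Cauchy--Schwarz, \eqref{ineq2}, then \eqref{est:p_lobale} and \eqref{est:discrete_beta}) never explains how $\de(p_\alpha)$ becomes a quantity those estimates can see. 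The missing idea is the global pressure decomposition \eqref{def:pression_globale}: writing $p_\liq=p-\bar p(s_\liq)$, the density difference is bounded by $\max_\R\abs{\rho_\liq'}\bigl(\abs{p_{\hdt}-\tilde p_{\hdt}}+\abs{\bar p(s_{\liq,\hdt})-\bar p(\tilde s_{\liq,\hdt})}\bigr)$; the first term is then controlled through \eqref{est:p_lobale}, giving $O(\h)$, while the second, since $\bar p\in C^1([0,1];\R)$, reduces to a saturation difference and hence, by the H\"older continuity of $\B^{-1}$ together with \eqref{est:discrete_beta}, to $O(\h^\theta)$ --- exactly the paper's treatment of its term $\mathtt T_2$. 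Without this decomposition the pressure part of your sum cannot be closed, so the proposal as it stands has a genuine gap.
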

\begin{proof}
	\begin{equation*}
      \begin{split}
         \norm{\U_{\liq,\dt,\h} - \tilde{\U}_{\liq,\dt,\h}}_{L^1(\Qt)}   & = 
                        \int_{\Qt}        
                           	  \abs{U_{\liq,\hdt}(t,x) - \tilde U_{\liq,\hdt}(t,x)}\dd x \; \dd t\\
        & \le 
              	\int_{\Qt} \abs{ s_{\liq,\hdt}(t,x) 
				          \Big(
		                                  \rho_\liq(p_{\liq,{\hdt}}(t,x)) - \rho_\liq(\tilde p_{\liq,{\hdt}}(t,x))
		                            \Big) 
		                                  } \dd x \; \dd t \\
	& +  
              	\int_{\Qt} \abs{ \rho_\liq(\tilde p_{\liq,{\hdt}}(t,x))
					  \Big(
					          s_{\liq,\hdt}(t,x) -  \tilde s_{\liq,\hdt}(t,x)
          				  \Big) 
				  	         }\dd x\; \dd t \\
       & \le \mathtt T_1+ \mathtt T_2, 
        \end{split}
\end{equation*}
where $\mathtt T_1$ and $\mathtt T_2$ defined as follows
\begin{equation}\label{note:T1}
  \mathtt T _1 = \rho_M 
  				\int_{\Qt} \abs{ 
					          s_{\liq,\hdt}(t,x) -  \tilde s_{\liq,\hdt}(t,x)
				  	         }\dd x\; \dd t,			
\end{equation}
\begin{equation}\label{note:T2}
\mathtt T_2 = \int_{\Qt} \abs{
		                                  \rho_\liq(p_{\liq,{\hdt}}(t,x)) - \rho_\liq(\tilde p_{\liq,{\hdt}}(t,x))
		                                  } \dd x \; \dd t.
\end{equation}
To handle the term on saturation $\mathtt T_1$, 
we use the fact that $\B^{-1}$ is a H\"older function, then 
$$
\mathtt T_1 \le  \rho_M \cte
  				\int_{\Qt} \abs{ 
					          \B(s_{\liq,\hdt}(t,x)) -  \B(\tilde s_{\liq,\hdt}(t,x))
				  	         }^\theta \dd x\; \dd t,
$$
and by application of the Cauchy-Schwarz inequality, we deduce
$$
\mathtt T_1 \le   \cte \Big(
  				\int_{\Qt} \abs{ 
					          \B(s_{\liq,\hdt}(t,x)) -  \B(\tilde s_{\liq,\hdt}(t,x))
				  	         } \dd x\; \dd t
			         \Big)^\theta
	           \le \cte \; (\mathtt T_1^\prime)^\theta,
$$
where $\mathtt T_1^\prime$ defined as follows 
$$
	T_1^\prime = \int_{\Qt} \abs{ 
					          \B(s_{\liq,\hdt}(t,x)) -  \B(\tilde s_{\liq,\hdt}(t,x))
				  	         } \dd x\; \dd t.
$$
We have
\begin{equation} \label{T1prime}
	\begin{split}
	T_1^\prime %& = \int_{\Qt} \abs{ 
			%		          \B(s_{\liq,\hdt}(t,x)) -  \B(\tilde s_{\liq,\hdt}(t,x))
			%	  	         } \dd x\; \dd t   \\
		          & =  \sn \dt \sk   \sum_{\sigma_\Du \in \E_\K}   
		          	    \int_{\K\cap\Du} \abs{    
   			    					    \B(s_{\liq,\hdt}(t,x)) -  \B(\tilde s_{\liq,\hdt}(t,x))
			  			                } \dd x  \\
			& =  \sn \dt \sk   \sum_{\sigma_\Du \in \E_\K}   
		          	    \int_{\K\cap\Du} \abs{    
   			    					    \B (s_{\liq,\hdt}(t,x)) -  \B(s_{\liq,\hdt}(t,\Q_\Du))
			  			                } \dd x \\
			& =  \sn \dt \sk   \sum_{\sigma_\Du \in \E_\K}   
		          	    \int_{\K\cap\Du} \abs{    
   			    					   \nabla \B (s_{\liq,\hdt}(t,x)) \cdot (x - \Q_\Du) 
			  			                } \dd x  \\
                         & \le  \sn \dt \sk   \sum_{\sigma_\Du \in \E_\K}   
   			    					 \Big| \nabla \B (s_{\liq,\hdt})|_\K\Big|
								   \diam(\Du) \abs{\K\cap \Du}	\\
			& \le  \h \sn \dt \sk  
   			    			 \Big|  \nabla \B (s_{\liq,\hdt})|_\K \Big|
						    \abs{\K}	 \\
	                 & \le \h \left(  \sn \dt \norm{\B(s_{\liq,\hdt})}^2_{\X_\h}
	                                       + \cte
	                 		  \right) \le \cte \h ,					    				   				
   	\end{split}
\end{equation}
where we have used the definitions of $s_{\liq,\hdt}$ 
and $\tilde s_{\liq,\hdt}$, the Cauchy-Schwarz inequality 
and the estimate \eqref{est:discrete_beta}, thus
\begin{equation} 
 \mathtt T_1 \le C \h^\theta. 
\end{equation}

To treat $\mathtt T_2$, we use the fact that the map
$\rho_\liq^\prime$ is bounded and the relationship between the gas
pressure and the global pressure, namely : $p_\liq=p-\bar{p}$ defined
in \eqref{def:pression_globale}, then we have
\begin{equation}\label{e2trans}
   \begin{split}
\mathtt T_2 &\le  \max_\R |\rho_\liq^\prime |\int_{\Qt}
       \abs{p_{\liq,{\hdt}}(t,x) - \tilde p_{\liq,{\hdt}}(t,x)}\dd x\dd t\\
         & \le  
          \max_\R |\rho_\liq^\prime  | \int_{\Qt}
       \abs{p_{\hdt}(t,x) - \tilde p_{\hdt}(t,x)}\dd x\dd t \\
       &+ 
         \max_\R |\rho_\liq^\prime  | \int_{\Qt}\abs{\bar{p}(s_{\liq,\hdt}(t,x)) -\bar{p}( \tilde s_{\liq,\hdt}(t,x)) }\dd x\dd t,
        \end{split}
\end{equation}
furthermore one can easily show that $\bar{p}$ is a $C^1([0,1];\R)$,
it follows, there exists a positive constant $C>0$ such that
\begin{equation*}
      \begin{split}
        \mathtt T_2&\le C \int_{\Qt}
            |p_{\hdt}(t,x) - \tilde p_{\hdt}(t,x) | \dd x \dd t
         + C \int_{\Qt}
        |s_{\liq,\hdt}(t,x) -  \tilde s_{\liq,\hdt}(t,x)| \dd x \dd t.
\end{split}
\end{equation*}
The last term in the previous inequality is proportional to $\mathtt T_1$, and
consequently it remains to show that the term on the global
pressure is small with $\h$. In fact, follows \eqref{T1prime}, one gets
\begin{equation*}
      \begin{split}
           \int_{\Qt}  |p_{\hdt}(t,x) - \tilde p_{\hdt}(t,x) | \dd x \dd t 
               & =  \sn \dt \sk   \sum_{\sigma_\Du \in \E_\K}
               		\int_{\K\cap\Du} \abs{ p_{\hdt}(t,x) - \tilde p_{\hdt}(t,x) } \dd x \\ 
	      & =   \sn \dt \sk   \sum_{\sigma_\Du \in \E_\K}
               		\int_{\K\cap\Du} \abs{ p_{\hdt}(t,x) - p_{\hdt}(t,\Q_\Du) } \dd x \\
              & =   \sn \dt \sk   \sum_{\sigma_\Du \in \E_\K}
               		\int_{\K\cap\Du} \abs{ \nabla p_{\hdt}(t,x) \cdot (x-\Q_\Du) } \dd x \\
	     %& \le \sn \dt \sk   \sum_{\sigma_\Du \in \E_\K}
               %	      \Big | \nabla p_{\hdt}|_\K \Big |  \diam(\Du) \abs{\K\cap\Du} \\
	     %&  \le \h \sn \dt \sk  
              % 	      \Big | \nabla p_{\hdt}|_\K \Big |  \abs{\K} \\ 
	     & \le \h \Big( \sn \dt \norm{p_{\hdt}}^2_{\X_\h} + \cte \Big) \le \cte \h .
	\end{split}
\end{equation*}
%where we have used the Cauchy-Schwarz inequality and the a priori estimate
%\eqref{est:p_lobale}. We deduce that
Finally, we consider the case where $\alpha = \gas$ in the same manner.
\end{proof}

We now give the space translate estimate for $\tilde  \U_{\alpha,\hdt}$ given by \eqref{sol:FV}.

\begin{lemma}\label{lem:translater-espace}$\left(\text{Space translate
      of }   \tilde \U_{\alpha,\hdt}\right)$. Under the assumptions
  $({A}\ref{hyp:A1})-({A}\ref{hyp:A7})$ . Let $p_{\alpha,\hdt}$ be a solution of
  \eqref{cd:initial discret}--\eqref{eq:pc disc}. Then, the following inequality
  hold:
  \begin{equation}\label{eq:translater_espace}
    \int_{\Omega^{'}\times (0,T)}\abs{ \tilde \U_{\alpha,\hdt}(t,x+y) - 
                                       \tilde \U_{\alpha,\hdt}(t,x)}  
                                    \dd x \dd t\le \omega(\abs{y}),
\end{equation}
for all $y \in \R^\dm$ with $\Omega '=\{x \in \Omega, \, [x,x+y]\subset \Omega\}$ and $\omega(\abs{y}) \to 0$ when $\abs{y}\to 0$.
\end{lemma}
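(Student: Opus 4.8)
The plan is to reduce the space translate of $\tilde\U_{\alpha,\hdt}=\phi\,\rho_\alpha(\tilde p_{\alpha,\hdt})\,\tilde s_{\alpha,\hdt}$ to space translates of the only two quantities whose discrete gradients are genuinely controlled: the finite volume global pressure $\tilde p_{\hdt}$ (through \eqref{est:p_lobale}) and the finite volume capillary function $\widetilde{\B(s_\liq)}_{\hdt}$ (through \eqref{est:discrete_beta}). First I would factor the porosity, writing $\tilde\U_{\alpha,\hdt}=\phi\,\tilde V_{\alpha,\hdt}$ with $\tilde V_{\alpha,\hdt}=\rho_\alpha(\tilde p_{\alpha,\hdt})\tilde s_{\alpha,\hdt}$ bounded by $\rho_M$, which gives the pointwise splitting $\abs{\tilde\U_\alpha(t,x+y)-\tilde\U_\alpha(t,x)}\le \phi_1\abs{\tilde V_\alpha(t,x+y)-\tilde V_\alpha(t,x)}+\rho_M\abs{\phi(x+y)-\phi(x)}$. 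Using (A\ref{hyp:A5}) with $\rho_\alpha'$ bounded, the Lipschitz character of $\bar p,\tilde p$ on $[0,1]$, and $p_\liq=p-\bar p(s_\liq)$ (resp. $p_\gas=p-\tilde p(s_\liq)$), the map $(p,s_\liq)\mapsto \rho_\alpha(p_\alpha)s_\alpha$ is Lipschitz, hence pointwise $\abs{\tilde V_\alpha(t,x+y)-\tilde V_\alpha(t,x)}\le C\big(\abs{\tilde p_{\hdt}(t,x+y)-\tilde p_{\hdt}(t,x)}+\abs{\tilde s_{\liq,\hdt}(t,x+y)-\tilde s_{\liq,\hdt}(t,x)}\big)$.

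The backbone is a discrete translate inequality on the dual mesh: for any finite volume function $\tilde w$ with nodal values $(w_\Du^n)$ and $t\in(t^{n-1},t^n]$, one has, by counting the dual interfaces crossed by the segment $[x,x+y]$ as in Eymard, Gallou\"et and Herbin, $\int_{\Omega'}\abs{\tilde w(t,x+y)-\tilde w(t,x)}\dd x\le \abs{y}\sum_{\sig\in\F_\h^\ant}\abs{\sig}\,\abs{w_\Eu^n-w_\Du^n}$. I would then pass to the seminorm by Cauchy--Schwarz over the interfaces, $\sum_{\sig}\abs{\sig}\abs{w_\Eu^n-w_\Du^n}\le\big(\sum_{\sig}\tfrac{\abs{\sig}}{\dis}\abs{w_\Eu^n-w_\Du^n}^2\big)^{1/2}\big(\sum_{\sig}\abs{\sig}\dis\big)^{1/2}$, bounding the first factor by $C\norm{w_\h^n}_{\X_\h}$ via \eqref{ineq2} and the second by $C\abs{\Omega}^{1/2}$ since $\sum_{\sig}\abs{\sig}\dis\le C\abs{\Omega}$. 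A Cauchy--Schwarz in time then yields $\int_0^T\!\int_{\Omega'}\abs{\tilde w(t,x+y)-\tilde w(t,x)}\dd x\dd t\le C\abs{y}\big(\sn\dt\norm{w_\h^n}_{\X_\h}^2\big)^{1/2}$. Applied to $\tilde w=\tilde p_{\hdt}$ with \eqref{est:p_lobale} and to $\tilde w=\widetilde{\B(s_\liq)}_{\hdt}$ with \eqref{est:discrete_beta}, this gives $\int_{\Omega'\times(0,T)}\abs{\tilde p(\cdot+y)-\tilde p}\le C\abs{y}$ and $\int_{\Omega'\times(0,T)}\abs{\widetilde{\B(s_\liq)}(\cdot+y)-\widetilde{\B(s_\liq)}}\le C\abs{y}$.

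The main obstacle is the saturation term, since the degeneracy forbids any direct gradient control of $s_\liq$ and only $\B(s_\liq)$ is controlled. Here I would use that $\B^{-1}$ is H\"older of order $\theta$ by (A\ref{hyp:A7}): because $\tilde s_{\liq,\hdt}=\B^{-1}(\widetilde{\B(s_\liq)}_{\hdt})$ pointwise, we get $\abs{\tilde s_\liq(t,x+y)-\tilde s_\liq(t,x)}\le c\,\abs{\widetilde{\B(s_\liq)}(t,x+y)-\widetilde{\B(s_\liq)}(t,x)}^\theta$. The delicate point is that the H\"older exponent must be applied to the full space--time integral, via Jensen's inequality on the finite measure set $\Omega'\times(0,T)$, and \emph{not} inside the discrete sum over interfaces; folding it into the sum would produce a factor $\sum_{\sig}\abs{\sig}\dis^{\theta/(2-\theta)}$ that blows up as $\h\to0$ when $\theta<1$. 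With Jensen, $\int_{\Omega'\times(0,T)}\abs{\tilde s_\liq(\cdot+y)-\tilde s_\liq}\le C\big(\int_{\Omega'\times(0,T)}\abs{\widetilde{\B(s_\liq)}(\cdot+y)-\widetilde{\B(s_\liq)}}\big)^\theta\le C(C\abs{y})^\theta=C'\abs{y}^\theta$.

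Collecting the three contributions gives the claim with $\omega(\abs{y})=C\abs{y}+C'\abs{y}^\theta+\rho_M\int_{\Omega'}\abs{\phi(\cdot+y)-\phi}$, each summand tending to $0$ as $\abs{y}\to0$ (the last by $L^1$ continuity of translation, since $\phi\in L^\infty(\Omega)\subset L^1(\Omega)$ by (A\ref{hyp:A1})). The case $\alpha=\gas$ is identical, using $s_\gas=1-s_\liq$ and $p_\gas=p-\tilde p(s_\liq)$, so that $\tilde s_\gas$ inherits the same translate bound as $\tilde s_\liq$.
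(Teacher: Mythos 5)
Your proof is correct and follows essentially the same route as the paper: the same splitting into a pressure part (handled via $p_\liq=p-\bar{p}(s_\liq)$, boundedness of $\rho_\liq^\prime$, and \eqref{est:p_lobale}) and a saturation part (handled via the H\"older continuity of $\B^{-1}$ with the exponent pulled outside the space--time integral, then \eqref{est:discrete_beta}), both reduced to the interface-counting translate bound of Eymard--Gallou\"et--Herbin combined with Cauchy--Schwarz, the shape-regularity bound $\abs{\sig}\,\dis\le C\abs{\K}$ and \eqref{ineq2}. The only differences are cosmetic: you treat the porosity factor $\phi$ explicitly (the paper silently suppresses it) and you package the interface argument as a generic lemma applied twice instead of writing it out inline for each of the two quantities.
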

\begin{proof}
For $\alpha = \liq$ and from the definition of $\U_{\liq,\hdt}$, one gets

\begin{equation*}
      \begin{split}
        &\int_{(0,T)\times\Omega^{'}}\abs{\tilde \U_{\liq,\hdt}(t,x+y) - \tilde \U_{\liq,\hdt}(t,x)}
        \dd x\dd t\\
        & = \int_{(0,T)\times\Omega^{'}}
        \abs{\Big(\rho_\liq(\tilde p_{\liq,{\hdt}}) \tilde s_{\liq,\hdt}\Big)(t,x+y) -
          \Big(\rho_\liq(\tilde p_{\liq,{\hdt}}) \tilde s_{\liq,\hdt}\Big)(t,x)} \dd x\dd t\\
%        & \le \int_{(0,T)\times\Omega^{'}}
%        \abs{\tilde s_{\liq,\hdt}(t,x+y) \Big(\rho_\liq(\tilde p_{\liq,{\hdt}}(t,x+y)) -\rho_\liq(\tilde p_{\liq,{\hdt}}(t,x))\Big) }\dd x\dd t \\
%        & \quad +   \int_{(0,T)\times\Omega^{'}} \abs{ \rho_\liq(\tilde p_{\liq,{\hdt}})(t,x)\Big(\tilde s_{\liq,\hdt}(t,x+y)  - \tilde s_{\liq,\hdt}(t,x)\Big) }\dd x\dd t\\
        & \le E_1+E_2,
        \end{split}
\end{equation*}
where $E_1$ and $E_2$ defined as follows
\begin{equation}\label{e1trans}
E_1 = \rho_M \int_{(0,T)\times\Omega^{'}} \abs{\tilde s_{\liq,\hdt}(t,x+y)  - \tilde s_{\liq,\hdt}(t,x) }\dd x\dd t,
\end{equation}
\begin{equation}\label{e2transs}
E_2 = \int_{(0,T)\times\Omega^{'}}
        \abs{ \rho_\liq(\tilde p_{\liq,{\hdt}}(t,x+y)) -\rho_\liq(\tilde p_{\liq,{\hdt}}(t,x)) }\dd x\dd t.
\end{equation}
To handle with the  space translation on saturation, we use again  the fact that $\B^{-1}$ is a H\"older function, then 
$$
E_1 \le  \rho_M C \int_{(0,T)\times\Omega^{'}}
        \abs{\B(\tilde s_{\liq,\hdt}(t,x+y) )- \B( \tilde s_{\liq,\hdt}(t,x))}^\theta \dd x\dd t
$$
and by application of the Cauchy-Schwarz inequality, we deduce
$$
E_1\le  C \Big(\int_{(0,T)\times\Omega^{'}}
        \abs{\B(\tilde s_{\liq,\hdt}(t,x+y) )- \B(\tilde s_{\liq,\hdt}(t,x))} \dd x\dd t\Big)^\theta. 
$$

According to \cite{Eymard:book}), let $y \in \R^\dm$, $x \in \Omega '$, and $L \in N(K)$. We define
a function $\beta_\sigma(x)$ for each $\sigma \in \F_\h^\ant$ by  
$$
\beta_{\sigma}=
\begin{cases}
  1, & \text{if the line segment $[x,x+y]$ intersects $\sigma$},\\
  0, & \text{otherwise}.
\end{cases}
$$
We observe that (see  \cite{Eymard:book} for more details)
$
    \int_{\Omega '}\beta_{\sigma_{\DE}}(x) \,dx \le |\sigma_{\DE}|.
  $ 
Now, denote that
\begin{equation*}
      \begin{split}
        E_1 \le & C \Big(\sn \dt \sum_{\sigma_{\DE} \in \F_\h^\ant} 
              \Big|\B(s_{\liq,\Eu}) - \B(s_{\liq,\Du})\Big|  
               \int_{\Omega '}\beta_{\sigma_{\DE}}(x)\dd x\Big)^\theta \\
        & \le 
            C \Big(\abs{y} \sn \dt \sum_{\sigma_{\DE}  \in \F_\h^\ant} \abs{\sigma_{\DE}}
              \Big|\B(s_{\liq,\Eu}) - \B(s_{\liq,\Du})\Big|\Big)^\theta .
\end{split}
\end{equation*}
Let us write $\abs{\sigma_{\DE}} = (d_{\DE}|\sigma_{\DE}|)^{\frac 1 2} (\frac{|\sigma_{\DE}|}{d_{\DE}})^{\frac 1 2}$. Obviously 
$\dis \le \frac{\diam(\K_\DE)}{\dm}$, and $\abs{\sig} \le \frac{\diam(\K_\DE)^{\dm-1} }{\dm-1}$, 
thus  by the regularity shape assumption \eqref{eq:regularity mesh}, we have
\begin{align}\label{regularity}
  \exists \; C_{te} > 0 , \quad \forall \h, \; \forall \Du \in \D_\h  \; \forall \Eu \in \Ne(\Du) \qquad 
  \abs{\sig} \dis\le C_{te} \abs{\K}.
\end{align}
Applying again the Cauchy-Schwarz inequality, using \eqref{regularity}, \eqref{ineq2}
 and the fact that the discrete gradient of the function $\B$ 
is bounded \eqref{est:discrete_beta} to obtain
\begin{equation} \label{e1trans2}
 E_1 \le C \abs{y}^\theta. 
\end{equation}

To treat the space translate of $E_2$, we use the fact that the map
$\rho_\liq^\prime$ is bounded and the relationship between the gas
pressure and the global pressure, namely : $p_\liq=p-\bar{p}$ defined
in \eqref{def:pression_globale}, then we have
\begin{equation*}
   \begin{split}
E_2 &\le  \max_\R |\rho_\liq^\prime |\int_{(0,T)\times\Omega^{'}}
       \abs{\tilde p_{\liq,{\hdt}}(t,x+y) - \tilde p_{\liq,{\hdt}}(t,x)}\dd x\dd t\\
         & \le  
          \max_\R |\rho_\liq^\prime  | \int_{(0,T)\times\Omega^{'}}
       \abs{\tilde p_{\hdt}(t,x+y) - \tilde p_{\hdt}(t,x)}\dd x\dd t \\
       &+ 
         \max_\R |\rho_\liq^\prime  | \int_{(0,T)\times\Omega^{'}}\abs{\bar{p}(\tilde s_{\liq,\hdt}(t,x+y)) -\bar{p}( \tilde s_{\liq,\hdt}(t,x)) }\dd x\dd t,
        \end{split}
\end{equation*}
furthermore one can easily show that $\bar{p}$ is a $C^1([0,1];\R)$,
it follows, there exists a positive constant $C>0$ such that
\begin{equation*}
      \begin{split}
        E_2&\le C \int_{(0,T)\times\Omega^{'}}
            |\tilde p_{\hdt}(t,x+y) - \tilde p_{\hdt}(t,x) | \dd x \dd t\\ 
        & \qquad + C \int_{(0,T)\times\Omega^{'}}
        |\tilde s_{\liq,\hdt}(t,x+y) -  \tilde s_{\liq,\hdt}(t,x)| \dd x \dd t.
\end{split}
\end{equation*}
The last term in the previous inequality is proportional to $E_1$, and
consequently it remains to show that the space translate on the global
pressure is small with $y$. In fact
\begin{equation*}
      \begin{split}
           \int_{(0,T)\times\Omega^{'}}  |\tilde p_{\hdt}(t,x+y) - \tilde p_{\hdt}(t,x) | \dd x \dd t 
               & \le \sn \dt \sum_{\sigma_{\DE}} |p_{\Eu}^{n} - p_{\Du}^{n} | 
            \int_{\Omega '}\beta_{\sigma_{\DE}}(x)\dd x \\
            & \le \abs{y} \sn \dt \sum_{\sigma_{\DE}} \abs{\sigma_{\DE}}
               |p_{\Eu}^{n} - p_{\Du}^{n}|.
\end{split}
\end{equation*}
Finally, using \eqref{ineq2} and the fact that the discrete gradient of global pressure
is bounded \eqref{est:p_lobale}, we deduce that
 \begin{equation}\label{est:space:4}
      \begin{split}
        &\int_{(0,T)\times\Omega^{'}}\abs{\tilde \U_{\liq,\Du}(t,x+y)-\tilde \U_{\liq,\Du}(t,x)}
        \dd x \le C (\abs{y} +\abs{y}^\theta),
\end{split}
\end{equation}
for some constant $C>0$.\\
In the same way, we prove the space translate for $\alpha = \gas$.
\end{proof}

We give below a time translate estimate for $\tilde \U_{\alpha,\hdt}$  given 
by \eqref{sol:FV}.

\begin{lemma}\label{lem:translater-time}
$\left(\text{Time translate of } \tilde \U_{\alpha,\hdt}\right)$. Under the assumptions $({A}\ref{hyp:A1})-({A}\ref{hyp:A7})$ . 
Let $p_{\alpha,\hdt}$ be a solution of
  \eqref{cd:initial discret}--\eqref{eq:pc disc}. Then, there exists a positive
  constant $C>0$ depending on $\Omega$, $T$ such that the following
  inequality hold:
  \begin{equation}\label{eq:translater_temps}
    \int_{\Omega \times
      (0,T-\tau)}\abs{\tilde \U_{\alpha,\hdt}(t+\tau,x) - \tilde \U_{\alpha,\hdt}(t,x)}^2\,dx \,dt \le \tilde{\omega}(\tau),
\end{equation}
for all $\tau\in (0,T)$. Here $\tilde{\omega}:\R^+\to\R^+$ is a modulus
of continuity, i.e. $\lim_{\tau\to 0}\tilde{\omega}(\tau)=0$. 
\end{lemma}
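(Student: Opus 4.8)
The plan is to reproduce, at the discrete level, the duality that produced the energy estimate in Proposition~\ref{prop:estimation_pression}: rather than estimating the $L^2$ norm of the time increment of $\tilde{\U}_{\alpha,\hdt}$ directly, I would first control the \emph{paired} quantity (writing $\Delta_\tau v(t,\cdot) := v(t+\tau,\cdot)-v(t,\cdot)$ and $g_\alpha(p_\alpha)=\int_0^{p_\alpha}\rho_\alpha(z)^{-1}\dd z$ as in the proof of Proposition~\ref{prop:estimation_pression})
\[
\mathcal T := \sum_{\alpha=\liq,\gas}\int_{0}^{T-\tau}\!\!\int_{\Omega}
\big(\Delta_\tau \tilde{\U}_{\alpha,\hdt}\big)\,
\big(\Delta_\tau\, g_\alpha(\tilde p_{\alpha,\hdt})\big)\,\dd x\,\dd t .
\]
This pairing is the crux: a naive bound $|\Delta_\tau \tilde{\U}_{\alpha,\hdt}|\le C$ combined with the scheme would lead to the total variation in time $\sum_n\sum_{\Du}\abs{\Du}\,|\U_{\alpha,\Du}^n-\U_{\alpha,\Du}^{n-1}|$, which is \emph{not} bounded uniformly in $\dt$ (equivalently, the interface sum $\sum_\sigma \tensorDE$ behaves like $\h^{-2}$), so the bound would blow up. Multiplying instead by the pressure-type increment $\Delta_\tau g_\alpha$ is exactly what restores an $\h$-uniform control through the energy estimate \eqref{est:p_alpha}.

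To bound $\mathcal T$ from above, I would fix $t$, let $n_0(t),n_1(t)$ be the time indices with $t\in(t^{n_0-1},t^{n_0}]$, $t+\tau\in(t^{n_1-1},t^{n_1}]$, so that $\Delta_\tau\tilde{\U}_{\alpha,\hdt}=\U_{\alpha,\Du}^{n_1}-\U_{\alpha,\Du}^{n_0}$ on $\Du$, and telescope this increment as $\sum_{m=n_0+1}^{n_1}(\U_{\alpha,\Du}^{m}-\U_{\alpha,\Du}^{m-1})$. Substituting the scheme \eqref{eq:pl disc}--\eqref{eq:pg disc} replaces each elementary increment by $\dt/\abs{\Du}$ times the discrete diffusion flux minus the source terms at level $m$. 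Introducing the characteristic function $\chi_m(t)=1$ iff $n_0(t)<m\le n_1(t)$, for which $\int_0^{T-\tau}\chi_m(t)\dd t\le\tau$, and swapping the order of summation extracts the factor $\tau$. The key algebraic step is a discrete integration by parts in space (conservativity of the fluxes, exactly as in the computation of $E_2$ in Proposition~\ref{prop:estimation_pression}), which transfers the flux onto the spatial increment $\desn\big(g_\alpha(p_\alpha^{n_1})\big)-\desn\big(g_\alpha(p_\alpha^{n_0})\big)$; here the density-on-interface choice \eqref{choice_of_density} converts $\rho^{n}_{\alpha,\DE}\,\desn\big(g_\alpha(p_\alpha^{n})\big)$ into the genuine pressure difference $\desn(p_\alpha^{n})$. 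A Cauchy--Schwarz inequality on each interface sum then splits it into the energy-controlled factor $\tensorDE M_\alpha(s^{m}_{\alpha,\DE})\,(\desn(p_\alpha^{m}))^2$ (bounded by \eqref{est:p_alpha}) and the analogous factors at levels $n_0,n_1$; since $\rho_\alpha^{-1}\le\rho_m^{-1}$, all are uniformly bounded in $\h$, giving $\mathcal T\le C\tau$ up to terms that vanish as $\h,\dt\to0$. The source contributions are handled separately by Cauchy--Schwarz, using $f_P,f_I\in L^2(\Qt)$, $0\le s_\alpha,s^I_\alpha\le1$, the boundedness of $\rho_\alpha$, and the global-pressure estimate \eqref{est:p_lobale}.

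It then remains to dominate the target $\sum_\alpha\int_{0}^{T-\tau}\!\int_\Omega(\Delta_\tau\tilde{\U}_{\alpha,\hdt})^2$ by $\mathcal T$, and \textbf{this is where I expect the main obstacle to be}: one needs a pointwise monotonicity/convexity inequality for the map $(p_\liq,p_\gas)\mapsto(\phi\rho_\liq s_\liq,\phi\rho_\gas s_\gas)$ paired with $(g_\liq,g_\gas)$, of the same nature as the inequality \eqref{bibi12} borrowed from \cite{ZS10}, yielding $\mathcal T\ge c\sum_\alpha\int_{0}^{T-\tau}\!\int_\Omega(\Delta_\tau\tilde{\U}_{\alpha,\hdt})^2$. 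The two phases must be coupled through $s_\liq+s_\gas=1$ and $p_c(s_\liq)=p_\gas-p_\liq$, since neither phase is monotone on its own (a change in $s_\alpha$ at fixed $p_\alpha$ moves $\U_\alpha$ but not $g_\alpha(p_\alpha)$). Granting this coercivity, the lemma follows with $\tilde{\omega}(\tau)=C\tau$. The boundedness of $\tilde{\U}_{\alpha,\hdt}$ (from assumption (A\ref{hyp:A5}) and $0\le s_\alpha\le1$) is used throughout to control the lower-order factors, and the only remaining nuisance is the bookkeeping of the mismatched time indices --- the flux lives at level $m$ while the transferred increment lives at levels $n_0,n_1$ --- which is absorbed by the characteristic-function splitting together with a discrete Cauchy--Schwarz in time; the gas phase is treated identically.
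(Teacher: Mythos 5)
You should know at the outset that the paper contains no proof of this lemma: it is stated and explicitly omitted, with the remark that, following \cite{Eymard:book} and \cite{Eymard-Hilhorst-Vohralik}, it is a direct consequence of \eqref{ineq2} and of the estimates \eqref{est:p_lobale} and \eqref{est:discrete_beta}. Note what that hint singles out: the global pressure and $\B(s_\liq)$, the only two quantities whose discrete gradients are bounded \emph{without} a mobility weight. Your proposal instead pairs $\Delta_\tau \tilde{\U}_{\alpha,\hdt}$ with $\Delta_\tau g_\alpha(\tilde p_{\alpha,\hdt})$, and this choice is where the argument actually breaks, at a point you dismiss as bookkeeping. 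After telescoping and the discrete integration by parts, the cross terms you must bound are of the form
\begin{equation*}
\sum_{\Du\in\D_\h}\sum_{\Eu\in\Ne(\Du)}\tensorDE\, M_\alpha(s^{m}_{\alpha,\DE})\,\rho^{m}_{\alpha,\DE}\,\delta^{m}_{\Du|\Eu}(p_\alpha)\;\delta^{n_i}_{\Du|\Eu}\bigl(g_\alpha(p_\alpha)\bigr),\qquad i=0,1,\quad m\neq n_i ,
\end{equation*}
and Cauchy--Schwarz produces the factor $\bigl[\sum\tensorDE M_\alpha(s^{m}_{\alpha,\DE})\,\bigl(\delta^{n_i}_{\Du|\Eu}(g_\alpha(p_\alpha))\bigr)^2\bigr]^{1/2}$, in which the mobility sits at time level $m$ while the pressure increment sits at level $n_i$. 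The energy estimate \eqref{est:p_alpha} controls only \emph{matched} levels; since $M_\alpha$ degenerates, the weight $M_\alpha(s^{m})$ cannot be exchanged for $M_\alpha(s^{n_i})$, and it cannot be dropped either, because the unweighted sums $\sum\tensorDE\bigl(\delta^{n_i}_{\Du|\Eu}(p_\alpha)\bigr)^2$ are precisely what the degenerate estimates fail to control (splitting $p_\alpha$ into $p$ plus a capillary part does not help: $\bar p(s_\liq)$ and $\tilde p(s_\liq)$ are controlled only with their own-time mobility weights, \eqref{est:discrete_pbar}--\eqref{est:discrete_ptilde}). So your claim that the "analogous factors at levels $n_0,n_1$ \dots are uniformly bounded in $\h$" is unjustified, and the first half of the proof does not close.

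The step you do flag as the main obstacle is, in addition, pointed at the wrong tool. An inequality "of the same nature as \eqref{bibi12}" cannot yield $\mathcal T\ge c\sum_\alpha\int\!\!\int(\Delta_\tau\tilde\U_{\alpha,\hdt})^2$: \eqref{bibi12} is a convexity (telescoping) bound whose right-hand side is a difference of potentials, and integrating such a difference over $t\in(0,T-\tau)$ gives at most $O(\tau)$, i.e.\ it can only produce $\mathcal T\ge -C\tau$, never a quadratic lower bound. What your reduction actually needs is a co-coercivity property of the map $(g_\liq(p_\liq),g_\gas(p_\gas))\mapsto(\U_\liq,\U_\gas)$, with $s_\liq=p_c^{-1}(p_\gas-p_\liq)$: one can check that this map has a symmetric positive semidefinite Jacobian, hence is the gradient of a convex potential, and the Baillon--Haddad theorem then delivers exactly your inequality provided the map is globally Lipschitz --- which uses $|p_c'|\ge\underline{p_c}$ from (A\ref{hyp:A6}) but also requires $\sup_\R|\rho_\alpha'|<\infty$, an assumption not contained in (A\ref{hyp:A1})--(A\ref{hyp:A7}). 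So that half is salvageable, though by a mechanism quite different from the one you invoke; it still does not repair the mismatch above. A proof consistent with the paper's hint would rather pair the time increments with increments of functions of $p$ and $\B(s_\liq)$, precisely so that the transferred factors are controlled by \eqref{est:p_lobale} and \eqref{est:discrete_beta}, and would extract the modulus $\tilde\omega(\tau)$ from the $L^\infty$ bound on $\tilde\U_{\alpha,\hdt}$ and the H\"older continuity of $\B^{-1}$ rather than from a quadratic coercivity.
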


We state without proof the following lemma on time translate of $\U_{\alpha,\hdt}$. 
Following \cite{Eymard:book} and \cite{Eymard-Hilhorst-Vohralik}, the proof is a 
direct consequence of \eqref{ineq2} and the estimations \eqref{est:p_lobale} 
and \eqref{est:discrete_beta}, then we omit it.

\section{Convergence and study of the limit}\label{sec:limite}
Using the a priori estimates of the previous section 
and the Kolmogorov relative compactness theorem, we show in this 
section that the approximate solutions $p_{\alpha,\h,\dt}$ converge
strongly in $L^1(\Qt)$ to a function $p_\alpha$ and we prove that $p_{\alpha}$
 is a weak solution of the continuous problem.
\subsection{Strong convergence in $L^1(\Qt)$ and convergence almost everywhere in $\Qt$}
\begin{theorem}\label{theo:strong-convergence} (Strong convergence in $L^1(\Qt)$)
There exist subsequences of   $s_{\alpha,\hdt}$, $p_{\alpha,\hdt}$, 
$\tilde s_{\alpha,\hdt}$ and  $\tilde p_{\alpha,\hdt}$ verify the following convergence 
\begin{align}   
    &\tilde \U_{\alpha,\hdt} \text{ and } \U_{\alpha,\hdt} \longrightarrow \U_\alpha  && \text{ strongly in }L^1(Q_T) \text{ and a.e. in } Q_T,\label{conv:U}\\
    &\tilde s_{\alpha,\hdt} \text{ and } s_{\alpha,\hdt}\longrightarrow s_\alpha && \text{ almost everywhere in } 
    Q_T,\label{conv:s_alpha}\\
    &\tilde p_{\alpha,\hdt} \text{ and } p_{\alpha,\hdt} {\longrightarrow} p_\alpha && \text{ almost everywhere in }   Q_T. \label{conv:p_alpha} 
\end{align}
Furthermore, $\B(s_\alpha)$ and $p_\alpha$ belongs in $L^1(0,T;H^1_{\Gamma_\liq}(\Omega))$ and 
\begin{align}
\label{conv:s_12}
& 0\le s_\alpha \le 1 \text{ a.e. in } Q_T,\\
& \U_\alpha=\phi \rho_\alpha(p_\alpha)s_\alpha \text { a.e. in } Q_T.
\label{iden=U}
\end{align} 
\end{theorem}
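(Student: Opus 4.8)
The plan is to extract strong $L^1(\Qt)$ compactness of the single nondegenerate conservative quantity $\U_{\alpha,\hdt}$ via the Kolmogorov relative compactness theorem, and then to disentangle the saturations and the phase pressures from it. I first record the uniform bound $0\le \tilde\U_{\alpha,\hdt}\le \phi_1\rho_M$, which follows from the maximum principle (Lemma~\ref{lem:principe_maximum}) together with ({A}\ref{hyp:A1}) and ({A}\ref{hyp:A5}); in particular $\tilde\U_{\alpha,\hdt}$ is bounded in $L^1(\Qt)$. Feeding this bound, the space-translate estimate (Lemma~\ref{lem:translater-espace}) and the time-translate estimate (Lemma~\ref{lem:translater-time}) into Kolmogorov's theorem yields relative compactness of $\{\tilde\U_{\alpha,\hdt}\}$ in $L^1(\Qt)$. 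A subsequence then gives $\tilde\U_{\alpha,\hdt}\to\U_\alpha$ strongly in $L^1(\Qt)$ and a.e. in $\Qt$, and Lemma~\ref{relationsolution} transfers the same limit to the finite element reconstruction $\U_{\alpha,\hdt}$, which proves \eqref{conv:U}.

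To recover the saturation I route through the capillary function $\B$, since $s_\liq$ itself carries no usable gradient bound. The discrete $H^1$ estimate \eqref{est:discrete_beta} combined with \eqref{ineq2} controls the space translates of $\B(\tilde s_{\liq,\hdt})$ (this is precisely the term $E_1$ in the proof of Lemma~\ref{lem:translater-espace}), while $\B(\tilde s_{\liq,\hdt})$ is bounded in $L^\infty(\Qt)$ by $\B(1)$; together with a time-translate estimate obtained as in Lemma~\ref{lem:translater-time}, Kolmogorov provides a subsequence with $\B(\tilde s_{\liq,\hdt})\to\chi$ a.e. in $\Qt$. Because $\B$ is a strictly increasing bijection of $[0,1]$ onto $[0,\B(1)]$ whose inverse is H\"older continuous by ({A}\ref{hyp:A7}), I set $s_\liq:=\B^{-1}(\chi)\in[0,1]$ and conclude $\tilde s_{\liq,\hdt}=\B^{-1}(\B(\tilde s_{\liq,\hdt}))\to s_\liq$ a.e.; the bound on $\int_{\Qt}|s_{\liq,\hdt}-\tilde s_{\liq,\hdt}|$ inside the proof of Lemma~\ref{relationsolution} gives the same limit for $s_{\liq,\hdt}$, and $s_\gas=1-s_\liq$ by \eqref{def:saturation}. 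This yields \eqref{conv:s_alpha} and \eqref{conv:s_12}.

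For the pressures I deliberately avoid the phase pressures, whose gradients are uncontrolled where a phase vanishes, and work with the global pressure. The estimate \eqref{est:p_lobale} with \eqref{ineq2} controls the space translates of the discrete global pressure $p_{\hdt}$, and the argument of Lemma~\ref{lem:translater-time} controls its time translates, so along a further subsequence $p_{\hdt}\to p$ a.e. in $\Qt$. Using the decomposition \eqref{def:pression_globale}, namely $p_{\liq,\hdt}=p_{\hdt}-\bar{p}(s_{\liq,\hdt})$ and $p_{\gas,\hdt}=p_{\hdt}+\tilde{p}(s_{\liq,\hdt})$, together with the a.e. convergence of $s_{\liq,\hdt}$ and the boundedness and continuity of $\bar{p}$ and $\tilde{p}$ on $[0,1]$, I obtain $p_{\liq,\hdt}\to p-\bar{p}(s_\liq)=:p_\liq$ and $p_{\gas,\hdt}\to p+\tilde{p}(s_\liq)=:p_\gas$ a.e., which is \eqref{conv:p_alpha}.

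It remains to identify the limit and the regularity. Since $s_{\alpha,\hdt}$, $p_{\alpha,\hdt}$, and hence $\rho_\alpha(p_{\alpha,\hdt})$ converge a.e. and are uniformly bounded by ({A}\ref{hyp:A5}), the product passes to the limit and, by uniqueness of a.e. limits, $\U_\alpha=\phi\rho_\alpha(p_\alpha)s_\alpha$, which is \eqref{iden=U}. For the regularity, the broken gradients of $p_{\hdt}$ and of $\B(s_{\liq,\hdt})$ are bounded in $L^2(\Qt)$ by \eqref{est:p_lobale} and \eqref{est:discrete_beta}; invoking the standard consistency of the nonconforming (Crouzeix--Raviart) gradient (the interface jumps having vanishing mean), their weak $L^2$ limits coincide with the distributional gradients of $p$ and $\B(s_\liq)$, so $p\in L^2(0,T;H^1(\Omega))$ and $\B(s_\liq)\in L^2(0,T;H^1_{\Gamma_\liq}(\Omega))$ (the homogeneous trace being inherited from $\X_\h^0$), which embed into the $L^1(0,T;\cdot)$ spaces required by the statement. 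The main obstacle is the degeneracy itself: the only quantity enjoying full space and time compactness is $\U_\alpha$, whereas the saturation and the global pressure enjoy only the spatial bounds coming from their discrete $H^1$ norms, so the delicate points are securing the missing time compactness of $\B(s_\liq)$ and of $p$ and then separating $s_\alpha$ and $p_\alpha$ from $\U_\alpha$ across the set where a phase disappears; this is exactly where the H\"older continuity of $\B^{-1}$ and the boundedness of $\bar{p},\tilde{p}$ up to the degeneracy are indispensable.
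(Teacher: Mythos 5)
Your opening step (Kolmogorov compactness for $\tilde{\U}_{\alpha,\hdt}$ from Lemmas \ref{lem:translater-espace} and \ref{lem:translater-time}, transferred to $\U_{\alpha,\hdt}$ by Lemma \ref{relationsolution}) and your closing identification of $\U_\alpha=\phi\rho_\alpha(p_\alpha)s_\alpha$ both match the paper. The gap is in the middle, where you recover $s_\liq$ and $p$: you invoke Kolmogorov a \emph{second} time, applied to $\B(\tilde s_{\liq,\hdt})$ and to $p_\hdt$, and for that you need time-translate estimates for these quantities, which you claim can be ``obtained as in Lemma \ref{lem:translater-time}''. They cannot. Any time-translate estimate of that type is extracted from the discrete evolution equations \eqref{eq:pl disc}--\eqref{eq:pg disc}, and those equations control time increments only of the conserved quantities $\rho_\alpha(p_\alpha)s_\alpha$; neither $s_\liq$, nor $\B(s_\liq)$, nor the global pressure $p$ satisfies a discrete evolution equation of its own, so there is nothing to test against to bound $\int|\B(s_{\liq,\hdt})(t+\tau,\cdot)-\B(s_{\liq,\hdt})(t,\cdot)|$ or the analogous quantity for $p_\hdt$. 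You flag exactly this issue in your last paragraph (``the missing time compactness of $\B(s_\liq)$ and of $p$''), but flagging it is not resolving it: as written, the a.e. convergences \eqref{conv:s_alpha} and \eqref{conv:p_alpha} are unproved.

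The paper closes this hole with an idea absent from your proposal: the map $\mathbb{A}(u_\liq,u_\gas)=(p,\B(s_\liq))$, where $u_\liq=\rho_\liq(p-\bar p(s_\liq))s_\liq$ and $u_\gas=\rho_\gas(p-\tilde p(s_\liq))(1-s_\liq)$, is a diffeomorphism (see \cite{CS07}, \cite{ZS10}, \cite{CS10}). Kolmogorov gives a.e. convergence of \emph{both} components $u_{\liq,\hdt}$ and $u_{\gas,\hdt}$ simultaneously, and the continuity of $\mathbb{A}^{-1}$ transfers this to a.e. convergence of the pair $(p_\hdt,\B(s_{\liq,\hdt}))$ with no separate compactness argument for those quantities; then the continuity of $\B^{-1}$ gives $s_{\liq,\hdt}\to s_\liq$ a.e., and the decomposition \eqref{def:pression_globale} gives \eqref{conv:p_alpha} --- exactly the post-processing you describe (note in passing that your gas decomposition has a sign slip: \eqref{def:pression_globale} gives $p_\gas=p-\tilde p(s_\liq)$, not $p+\tilde p(s_\liq)$). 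In short, your use of the H\"older continuity of $\B^{-1}$ and of the boundedness of $\bar p$, $\tilde p$ after the a.e. convergence of $(p_\hdt,\B(s_{\liq,\hdt}))$ is sound, and your spatial estimates (\eqref{est:discrete_beta}, \eqref{est:p_lobale} with \eqref{ineq2}) are the right ingredients for the $L^1(0,T;H^1)$ regularity of the limits; what is missing is the only available mechanism for producing that a.e. convergence in the compressible setting, namely inverting the pair of conserved quantities rather than seeking compactness of $s$ and $p$ individually.
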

\begin{proof}
  Observe that from Lemma
  \ref{lem:translater-espace} and \ref{lem:translater-time} and Kolmogorov's compactness
 criterion (\cite[Theorem IV.25]{Brezis}, \cite[Theorem 14.1]{Eymard:book}), 
 we deduce that $\tilde \U_{\alpha,\hdt}$ is
 relatively compact in $L^1(\Qt)$.  This ensures the following strong convergences
 of a subsquence of   $\tilde \U_{\alpha,\hdt}$
\begin{align*}
  & \rho_\alpha(\tilde p_{\alpha,\hdt}) \tilde s_{\alpha,\hdt} \longrightarrow l_\alpha
  \quad \text{ in $L^1(Q_T)$ and a.e. in $Q_T$ },
\end{align*} 
and due to the Lemma \ref{relationsolution} , we deduce that $\U_{\alpha,\hdt}$ converges to the 
same $l_\alpha$.

Denote by $u_\alpha= \rho_\alpha(p_\alpha)s_\alpha$. Define the map
$\mathbb{A} : \R^+ \times \R^+ \mapsto \R^+ \times [0,\B(1)]$ defined
by
\begin{equation}
  \mathbb{A}(u_\liq,u_\gas) = (p,\B(s_\liq))
\label{def:H}
\end{equation}
where $u_\alpha$ are solutions of the system
\begin{align*}
  & u_\liq(p,\B(s_\liq)) = \rho_\liq(p-\bar{p}(\B
  ^{-1}(\B(s_\liq))))\B ^{-1}(\B(s_\liq))\\ & u_\gas(p,\B(s_\liq)) =
  \rho_\gas(p-\tilde{p}(\B ^{-1}(\B(s_\liq))))(1-\B ^{-1}(\B(s_\liq)).
\end{align*}
Note that $\mathbb{A}$ is well defined as a diffeomorphism \cite{CS07}, \cite{ZS10} and \cite{CS10}. 
As the map ${\mathbb{A}}$ defined in \eqref{def:H} is continuous, we
deduce
\begin{align*}
  & p_\hdt \longrightarrow p \quad \text{ a.e. in } Q_T,\\
  & \B(s_{l,\hdt}) \longrightarrow \B^{*}\quad \text{ a.e. in } Q_T.
\end{align*} 
Then, as $\B^{-1}$ is continuous, this leads to the desired estimate  \eqref{conv:s_alpha} 
$$
s_{l,\hdt} \longrightarrow s_l = \B^{-1}(\B^{*})\quad \text{ a.e. in }
  Q_T.
$$

Consequently and due to the relationship between the pressure of each
phase and the global pressure defined in \eqref{def:pression_globale},
then the convergences \eqref{conv:p_alpha} hold  
\begin{align*}
  & p_{\alpha,\hdt} \longrightarrow p_\alpha \quad \text{ a.e. in } Q_T.
\end{align*} 

Moreover, due to the space translate estimate on the saturation and the global pressure
\eqref{e1trans}-\eqref{e2transs}, \cite[Theorem 3.10]{Eymard:book} gives that
$\B(s_\alpha)$ and $p\in L^1(0,T;H^1_{\Gamma_\liq}(\Omega))$.
The identification of the limit in \eqref{iden=U} follows from the
previous convergence.
\end{proof}

%%%%%%%%%%%%%%%%%%%%%%%%%%%%%%%%%%%%%

\subsection{Proof of theorem \ref{theo:principal}}
In order to achieve the proof of Theorem \ref{theo:principal} and show that 
$p_\alpha$ is a weak solution of the continuous problem, it remains to pass to the limit 
as $(\hdt)$ goes to zero in the formulations \eqref{eq:pl disc}--\eqref{eq:pg disc}. 
For this purpose, we introduce 
\begin{align}
\mathcal G := \{ \psi  \in \mathcal C^{2,1}(\Omega \times [0,T]), \psi = 0 \text{ on }  
\partial \Omega \times [0,T], \psi(.,T) = 0  \} .
\end{align}
  Let $T$ be a fixed positive constant and $\psi \in \mathcal G$. 
  Set $\psi_\Du^n:=\psi(t^n,\Q_\Du)$ for all
  $\Du \in \D_\h$ and $n\in[0,N]$.\\
  For the discrete liquid equation, we multiply the equation
  \eqref{eq:pl disc} by $\pas \psi_\Du^{n}$ and sum 
  the result  over $\Du \in \D_\h^\ant$
  and $n\in \{1,\cdots,N\}$. This yields
  $$
  \cc_1 +\cc_2+\cc_3  = 0,
  $$
  where 
  \begin{equation*}
    \begin{split}
      \cc_1 & = \sn  \sum_{\Du \in \D_\h} \abs{\Du}\phi_\Du \left(
        \rho_\liq(p^{n}_{\liq,\Du})s^{n}_{\liq,\Du}
        -\rho_\liq(p^{n-1}_{\liq,\Du})s^{n-1}_{\liq,\Du}\right) \psi_\Du^{n},\\
      \cc_2 & = \sm \rho^{n}_{\liq,\DE} \tensorDE G_\liq(s^{n}_{\liq,\Du},s^{n}_{\liq,\Eu};\de(p_{\liq})) \psi_\Du^{n},\\
      \cc_3 & = \sm \abs{\Du} \left( \rho_\liq(p_{\liq,\Du}^{n})
        s_{\liq,\Du}^{n} f_{P,\Du}^{n}\psi_\Du^{n}-
        \rho_\liq(p_{\liq,\Du}^{n}) \sli
        f_{I,\Du}^{n}\psi_\Du^{n}\right).
    \end{split}
  \end{equation*}
  using $\psi_\Du^{n} = 0$ for all $\Du \in \D_\h^\ext$ and  $n\in \{0,\cdots,N\}$. 
  We now show that each of the above terms converges to its continuous version as $\h$ and $\dt$ 
  tend to zero. \\

 Firstly, for the evolution term. Making summation by parts in time and keeping in mind that
  $\psi(T=t^{N},\Q_\Du) = \psi_\Du^{N}=0$. For all $\Du \in \D_\h$, we
  get
  \begin{equation*}
    \begin{split}
      \cc_1 = & - \sn \sum_{\Du \in \D_\h}  \abs{\Du} \phi_\Du \rho_\liq(p^{n}_{\liq,\Du}) s^{n}_{\liq,\Du}
                                    \left(
 		                         \psi_\Du^{n}- \psi_\Du^{n-1} 
		                 \right)  
                         - \sum_{\Du \in \D_\h} \abs{\Du} \phi_\Du \rho_\liq(p^{0}_{\liq,\Du})s^{0}_{\liq,\Du} \psi_\Du^0 \\
                   = & -\sn \sum_{\Du \in \D_\h} 
                                                   \int_{t^{n-1}}^{t^{n}}  
                                                       \int_{\Du}  
                                                           \phi_\Du \rho_\liq(p^{n}_{\liq,\Du}) s^{n}_{\liq,\Du} \partial_t \psi(t,\Q_\Du)\dd x\dd t 
                        - \sum_{\Du \in \D_\h} \int_{\Du} 
                                                                   \phi_\Du \rho_\liq(p^{0}_{\liq,\Du})s^{0}_{\liq,\Du}\psi(0,\Q_\Du) \dd x.
    \end{split}
  \end{equation*}
Since $\phi_\h \rho_\liq(p_{\liq,\hdt}) s_{\liq,\hdt}$ and $\phi_\h \rho_\liq(p^0_{\liq,\hdt})s^0_{\liq,\hdt}$ 
converge almost everywhere respectively to $\phi\rho_\liq(p_\liq)s_\liq$ and $\phi
\rho_\liq(p^0_\liq)s^0_\liq$, and as a consequence of Lebesgue dominated
convergence theorem, we get
\begin{equation*}
  \cc_1 \tend - \int_{Q_T}\phi \rho_\liq(p_\liq)s_\liq \partial_t \psi(t,x) \dd x \dd t 
  - \int_{\Omega}\phi \rho_\liq(p^0_\liq)s^0_\liq \psi(0,x)\dd x, \text{ as }  \h, \dt \to 0.
\end{equation*}
Now, let us focus on convergence of the degenerate diffusive term to show 
\begin{equation}\label{conv:S2h}
  \begin{split}
    \cc_2 \tend - \int_{Q_T}\rho_\liq(p_\liq)M_\liq(s_\liq)\nabla p_\liq\cdot \nabla \psi
    \dd x \dd t, \text{ as }  \h, \dt \to 0.
  \end{split}
\end{equation}
Since the discrete gradient of each phase is not bounded, it is not possible to justify the pass to the limit in a straightforward way. 
To do this, we use the feature of global pressure and the auxiliary pressures defined in \eqref{def:pression_globale} 
and the discrete energy estimates in proposition \ref{prop:estimation_pression} and corollary \ref{cor:est}.

We rewrite $\cc_2$ as 
\begin{align*}
	\cc_2 =  \cc_{2,1}+\cc_{2,2}
\end{align*}
with, by using the definition \eqref{def:pression_globale}, 
  \begin{align*}
    & \cc_{2,1} = - \sm \tensorDE  
    \rho^{n}_{\liq,\DE}M_\liq(s^{n}_{\liq,\DE}) \de(p)    \psi_\Du^{n}, \\
    & \cc_{2,2} =  \sm \tensorDE
    \rho^{n}_{\liq,\DE}M_\liq(s^{n}_{\liq,\DE}) \de(\bar{p}(s_\liq))  \psi_\Du^{n}.
  \end{align*}
Let us show that 
\begin{align}\label{conv:A1}
  \cc_{2,1} \tend - \int_{Q_T} \tensor(x) \; \rho_\liq(p_\liq)M_\liq(s_\liq)
  \nabla p \cdot \nabla \psi\, \dd x \dd t \text{ as } \hdt \to 0.
\end{align}
For each couple of neighbours $\Du$ and $\Eu$ we denote $\slmin$ the minimum of
$s_{\liq,\Du}^{n}$ and $s_{\liq,\Eu}^{n}$ and we introduce
\begin{align} \label{A1*}
  \cc_{2,1}^{*} = - \sm \tensorDE \;  \rho^{n}_{\liq,\DE}M_\liq(\slmin)
  \de(p)  \psi_\Du^{n}
\end{align}

We now show %
\begin{align}\label{conv:A*}
   \cc_{2,1}^{*} \tend - \int_{Q_T} \tensor(x) \; \rho_\liq(p_\liq)M_\liq(s_\liq)
  \nabla p \cdot \nabla \psi\, \dd x \dd t \text{ as } \hdt \to 0
\end{align}
as $\hdt \to 0$. Define $\overline{s}_{\alpha,\hdt}$ and $\underline{s}_{\alpha,\hdt}$
by
$$
\overline{s}_{\alpha,\hdt}|_{(t^n,t^{n}]\times \K_{\DE}}:=\max\{s_{\alpha,\Du}^{n},s_{\alpha,\Eu}^{n}\},\quad
\underline{s}_{\alpha,\hdt}|_{(t^n,t^{n}]\times \K_{\DE}}:=\min\{s_{\alpha,\Du}^{n},s_{\alpha,\Eu}^{n}\}
$$
Remark that
\begin{align*}
  \cc_{2,1}^{*} & = - \sm \tensorDE \;  \rho^{n}_{\liq,\DE}M_\liq(\slmin)  \de(p)  \psi_\Du^{n} \\
  		  & = - \snDE \tensorDE \;  \rho^{n}_{\liq,\DE}M_\liq(\slmin)  p_\Eu^{n}  \psi_\Du^{n} \\
		  & = - \snDE   \rho^{n}_{\liq,\DE}M_\liq(\slmin) \sum_{\K \in \T_\h}  \left(
		  									\tensor(x) \nabla \varphi_\Eu, \nabla \varphi_\Du 
															\right)_{0,\K} p_\Eu^{n}  \psi_\Du^{n} \\
		 & = \sn \dt \sum_{\K \in \T_\h} \int_{\K} \tensor(x) \rho_\liq(p_{\liq,\h}^{n}) M_\liq(\underline{s}_{\liq,\h}^{n})
		         \nabla p_\h^{n} \cdot \nabla \left( \sum_{\Du \in \D_\h} \psi(t^{n},\Q_\Du ) \varphi_\Du(x) \right) \dd x 					         
\end{align*}
We will show the validity of two passages to the limit. We begin by defining : % by showing that
%\begin{multline}\label{conv:A1*}
%\sn \dt \sum_{\K \in \T_\h} \int_{\K} \tensor(x) \rho_\liq(p_{\liq,\h}^{n}) M_\liq(\underline{s}_{\liq,\h}^{n})
%		         \nabla p_\h^{n} \cdot \nabla \left( \sum_{\Du \in \D_\h} \psi(t^{n},\Q_\Du ) \varphi_\Du(x) \right) \dd x \\ 	-
%\sn \dt \sum_{\K \in \T_\h} \int_{\K} \tensor(x) \rho_\liq(p_{\liq,\h}^{n}) M_\liq(\underline{s}_{\liq,\h}^{n})
%		         \nabla p_\h^{n} \cdot \nabla \psi(t^{n},x ) \dd x \tend 0 \text{ as } \h \to 0.
%\end{multline}
%%
%%
%We set  
%$$
%\Psi(t^{n},.) := \sum_{\Du \in \D_\h} \psi(t^{n},\Q_\Du ) \varphi_\Du
%$$
%and
\begin{align*}
  \cd_1  =\cc_{2,1}^{*}- \sn \dt \sum_{\K \in \T_\h} \int_{\K} \tensor(x) \rho_\liq(p_{\liq,\h}^{n}) M_\liq(\underline{s}_{\liq,\h}^{n})
		         \nabla p_\h^{n} \cdot \nabla \psi(t^{n},x ) \dd x.
  % \sn \dt \sum_{\K \in \T_\h} \int_{\K} \tensor(x) \rho_\liq(p_{\liq,\h}^{n}) M_\liq(\underline{s}_{\liq,\h}^{n})
%		            \nabla p_\h^{n}(x) \cdot \nabla \left(\Psi(t^{n},x) - \psi(t^{n},x) \right) \dd x . 					         
\end{align*}
We then estimate 
$$
\abs{\cd_1} \le C \h,
$$
using the  estimate \eqref{est:p_lobale}, for more details see \cite[Theorem 15.3]{Ciarlet} and \cite[section 6.2]{Eymard-Hilhorst-Vohralik}. Then, 
$$
\cd_1\tend 0 \text{ as } \h \to 0.
$$

We next show that 
\begin{multline}\label{conv:fin A1}
\sn \dt \sum_{\K \in \T_\h} \int_{\K} \tensor(x) \rho_\liq(p_{\liq,\h}^{n}) M_\liq(\underline{s}_{\liq,\h}^{n})
		         \nabla p_\h^{n} \cdot \nabla \psi(t^{n},x ) \dd x  \tend  \\ 
\int_{0}^{T} \int_\Omega \tensor(x) \rho_\liq(p_\liq) M_\liq(s_\liq) \nabla p(t,x) \cdot \nabla \psi(t,x) \; \dd x \dd t
\end{multline}
as $\hdt \to 0$. We see that both $p_\h^{n}(x)$ and $\psi(t^{n},x)$ are constant in time, so that we can  easily introduce
an integral with respect to time into the first term of \eqref{conv:fin A1}. We further add and subtract 
$$
\sn \int_{t^{n-1}}^{t^{n}} \int_\Omega  \tensor(x) \rho_\liq(p_{\liq,\h}^{n}) M_\liq(\underline{s}_{\liq,\h}^{n})
		         \nabla p_\h^{n} \cdot \nabla \psi(t,x ) \; \dd x \dd t  
$$
and introduce 
\begin{align*}
 & \cd_2  :=   \sn \int_{t^{n-1}}^{t^{n}} \sum_{\K \in \T_\h} \int_{\K} \tensor(x) \rho_\liq(p_{\liq,\h}^{n}) M_\liq(\underline{s}_{\liq,\h}^{n})
		                 \nabla p_\h^{n} \cdot \left( \nabla \psi(t^{n},x) - \nabla \psi(t,x) \right) \; \dd x \dd t, \\ 
& \cd_3  :=  \int_{0}^{T} \sum_{\K \in \T_\h} \int_{\K}	  \tensor(x)  \rho_\liq(p_{\liq,\hdt}) M_\liq(\underline{s}_{\liq,\hdt})
			       \nabla  p_\hdt(t,x) \cdot   \nabla \psi(t,x )  \\ & -   \int_{0}^{T}  \int_\Omega 	 \tensor(x) \; \rho_\liq(p_\liq)M_\liq(s_\liq)
  \nabla p(t,x) \cdot \nabla \psi(t,x) \; \dd x \dd t          
\end{align*}
where $p_\hdt$ is given by \eqref{sol:FE}.  Clearly, \eqref{conv:fin A1} is valid when $\cd_2 $ and $\cd_3 $ tend to zero 
as $\hdt \to 0$. We first estimate $\cd_2 $. We have, for $t \in (t^{n-1},t^{n}]$,
$$
\abs{ \nabla \psi(t^{n},x) - \nabla \psi(t,x)} \le g(\dt),
$$
where $g$ satisfies $g(\dt)>0$ and $g(\dt) \tend 0$ as $\dt \to 0$. Thus  
$$
\abs{\cd_2 } \le C g(\dt) \sn \dt \sum_{\K \in \T_\h} \abs{\nabla p_\h^{n}|_\K} \abs{\K} 
                            \le  C g(\dt) T^{\frac 1 2} \abs{\Omega}^{\frac 1 2}
$$
using the Cauchy-Schwarz inequality and the estimate \eqref{est:p_lobale}. \\

We now turn to $\cd_3$. We easily notice that we cannot use the Green theorem for 
$p_\h^{n}$ on $\Omega$, since  $p_\h^{n}\notin H^1(\Omega)$. 
 So, we are thus forced to apply it on each $\K \in \T_\h$.

To show that  $\cd_3 \tend 0$ as $\hdt \to 0$,
we begin by showing that 
\begin{align}\label{conv-weak:p}
	 \int_{0}^{T} \sum_{\K \in \T_\h} \int_{\K} \left( \nabla p_{\hdt}(t,x) - \nabla p(t,x) \right) \cdot  \W(t,x) \; \dd x \dd t \tend 0
\end{align}
as $\hdt \to 0$ for all $\W \in (\C^1(\overline{\Qt}))^\dm$. To this purpose, 
we use the a priori estimate \eqref{est:p_lobale} and \cite[Section 6.2]{Eymard-Hilhorst-Vohralik}.
Using the density of the set $[\C^1(\overline{\Qt})]^\dm$ in $[L^2(\Qt)]^\dm$ 
and \eqref{conv-weak:p}, we will conclude a weak convergence
of $\nabla p_\hdt$ (piecewise constant function is space and time) to $\nabla p$.

We now finally conclude that $\cd_3\tend 0$ as $\hdt \to 0$. To do that,
we begin by showing that $\underline{s}_{\liq,\hdt} \to s_\liq$ a.e on $Q_T$.
Define $\overline{s}_{\alpha,\hdt}$ and $\underline{s}_{\alpha,\hdt}$
by
$$
\overline{s}_{\alpha,\hdt}|_{(t^n,t^{n}]\times \K_{\DE}}:=\max\{s_{\alpha,\Du}^{n},s_{\alpha,\Eu}^{n}\},\quad
\underline{s}_{\alpha,\hdt}|_{(t^n,t^{n}]\times \K_{\DE}}:=\min\{s_{\alpha,\Du}^{n},s_{\alpha,\Eu}^{n}\}
$$
 By the monotonicity of $\B$, we have
\begin{align*}
  \int_{0}^{T}\int_{\Omega}
  \abs{\B(\overline{s}_{\liq,\hdt})-\B(\underline{s}_{\liq,\hdt})}^2\dd x \dd t \le &
   \sm \int_{\K_\DE} \left(\B(s_{\liq,\Eu}^{n}) - \B(s_{\liq,\Du}^{n}) \right)^2   \dd x\\ &\le 
   \sm \int_{\K_\DE} \abs{ \nabla \B(s_{\liq,\h}^{n})|_{\K_\DE}}^2 \dis^2 \;    \dd x \\ &\le 
   \sm  \abs{ \nabla \B(s_{\liq,\h}^{n})|_{\K_\DE}}^2 \dis^2   \abs{{\K_\DE}}\\ &\le 
 \h^2   \sn \dt \sum_{\sig \in \F_\h^\ant} \abs{ \nabla \B(s_{\liq,\h}^{n})|_{\K_\DE}}^2   \abs{{\K_\DE}}\\ &\le 
 \h^2 \sn \dt \norm{\B(s_{\liq,\h}^n)}_{\X_\h}^2 \le C \h^2
\end{align*}
where we have used the estimate \eqref{est:discrete_beta}.\\
Since $\B^{-1}$ is continuous, we deduce up to a subsequence
\begin{align}\label{conv:sminsmax}
  \abs{\underline{s}_{\alpha,\D_m}-\overline{s}_{\alpha,\D_m}}\to 0
  \text{ a.e. on } Q_T.
\end{align}
Moreover, we have $\underline{s}_{\alpha,\hdt}\le
s_{\alpha,\hdt}\le\overline{s}_{\alpha,\hdt} $ and $s_{\alpha,\hdt}\to
s_\alpha$ a.e. on $Q_T$. Consequently, and due to the continuity of
the mobility function $M_\liq$ we have 
\begin{align}\label{conv:smin}
M_\liq(\underline{s}_{\liq,\hdt})\to M_\liq(s_\liq) 
\end{align}
a.e on $Q_T$ and in $L^p(Q_T)$ for $p<+\infty$.\\
Finally, we further add and subtract
$ \int_{0}^{T}  \int_\Omega 	
   \tensor(x) \; \rho_\liq(p_\liq)M_\liq(s_\liq)
   \nabla p_\hdt(t,x) \cdot \nabla \psi(t,x) \; \dd x \dd t $ 
to $\cd_3$ and using \eqref{conv:p_alpha}, \eqref{conv:smin},
the a priori estimate \eqref{est:p_lobale}, the weak convergence 
of $\nabla p_\hdt$ to $\nabla p$ \eqref{conv-weak:p}, to conclude
that $\cd_3\tend 0$ as $\hdt \to 0$. Altogether, combining 
\eqref{A1*} and \eqref{conv:fin A1}  gives 
\begin{align*}
   \cc_{2,1}^{*} \tend - \int_{Q_T} \tensor(x) \; \rho_\liq(p_\liq)M_\liq(s_\liq)
  \nabla p \cdot \nabla \psi\, \dd x \dd t \text{ as } \hdt \to 0.
\end{align*} 

It remains to show that 
\begin{equation}\label{A1m-A1m*}
  \abs{\cc_{2,1}-\cc_{2,1}^{*}} \tend 0 \text{ as } \hdt \to 0.
\end{equation}
Remark that 
\begin{equation*}
  \abs{M_\liq(s_{\liq,\DE}^{n})\de(p)-M_\liq(\slmin)\de(p)}\le
  C \abs{s_{\liq,\Eu}^{n}-s_{\liq,\Du}^{n}}\abs{\de(p)}.
\end{equation*}
Consequently
$$
  \abs{\cc_{2,1}-\cc_{2,1}^{*}} \le C\sn \dt \sum_{\K \in \T_\h} \int_{\K} \abs{s_{\liq,\Eu}^{n}-s_{\liq,\Du}^{n}} 
		         \nabla p_\h^{n} \cdot \nabla \left( \sum_{\Du \in \D_\h} \psi(t^{n},\Q_\Du ) \varphi_\Du(x) \right) \dd x 	
$$
Applying the Cauchy-Schwarz inequality, and thanks to the uniform
bound \eqref{est:p_lobale} and the convergence
\eqref{conv:sminsmax}, we establish \eqref{A1m-A1m*}.\\

%%%%%%%%%%%%%%  debut    %%%%%%%%%%%%%%%

To prove the pass to limit of $\cc_{2,2}$, we need to prove firstly that
\begin{align*}
  \| \de(\Gamma(s_\liq)) - \sqrt{M_\liq(s^{n}_{\liq,\DE})}
  \de(\bar{p}(s_\liq))\|_{L^2(Q_T)} \to 0 \text{ as }
  \hdt \to 0,
\end{align*}
where $\Gamma(s_\liq)=\int_{0}^{s_\liq}\sqrt{M_\liq(z)}\frac{\dd
  \bar{p}}{\dd s_\liq}(z) \dd z$.\\
In fact, remark that there exist $a\in[s_{\liq,\Du},s_{\liq,\Eu}]$ such as:
 \begin{align*}
   |\de(\Gamma(s_\liq)) - \sqrt{M_\liq(s^{n}_{\liq,\Du,\Eu})}\de(\bar{p}(s_\liq))|& 
   = |\sqrt{M_\liq(a)} - \sqrt{M_\liq(s^{n}_{\liq,\Du,\Eu})}|| 
        \de(\bar{p}(s_\liq))|\\ & 
   \le C |\de(\bar{p}(s_\liq))| 
   \le C \abs{s_{\liq,\Eu}^{n}-s_{\liq,\Du}^{n}} \\ &
   \le C \abs{\B(s_{\liq,\Eu}^{n}) - \B(s_{\liq,\Du}^{n})}^\theta,
    \end{align*}
since $\B^{-1}$ is an H\"older function. Thus we get,
 \begin{equation*}
   \begin{aligned}
     & \| \de(\Gamma(s_\liq)) - \sqrt{M_\liq(s^{n}_{\liq,\Du,\Eu})}
     \de(\bar{p}(s_\liq))\|^2_{L^2(Q_T)}\\ &
          =  \sm |\K_\DE| | \de(\Gamma(s_\liq)) - \sqrt{M_\liq(s^{n}_{\liq,\Du,\Eu})}
           \de(\bar{p}(s_\liq))|^2 \\ & 
     \le \sm |\K_\DE|^{1-\theta} |\K_\DE|^{\theta}
         \abs{\B(s_{\liq,\Eu}^{n}) - \B(s_{\liq,\Du}^{n})}^{2\theta},
       \end{aligned}
 \end{equation*}
 and using the Cauchy-Schwarz inequality and the estimate , we deduce 
  \begin{equation*}
   \begin{aligned}
     & \| \de(\Gamma(s_\liq)) - \sqrt{M_\liq(s^{n}_{\liq,\Du,L})}
     \de(\bar{p}(s_\liq))\|^2_{L^2(Q_T)}\\ &
       \le \left(\sm |\K_\DE|\right)^{1-\theta}\left(\sm |\K_\DE|
         \abs{\B(s_{\liq,E}^{n}) - \B(s_{\liq,D}^{n})}^2\right)^{\theta}\\ &
       \le \left(\sm |\K_\DE|\right)^{1-\theta}\left(\sm |\K_\DE|
          \abs{ \nabla \B(s_{\liq,\h}^{n})|_{\K_\DE}}^2 \dis^2\right)^{\theta}\\ &   
    \le C \h^{2\theta} \left(\sn \dt \sum_{\sigma_\DE \in \F_\h^\ant} |\K_\DE|
          \abs{ \nabla \B(s_{\liq,\h}^{n})|_{\K_\DE}}^2 \right)^{\theta} \\ &
          \le C \h^{2\theta} \left(\sn \dt \norm{ \nabla \B(s_{\liq,\h}^{n})}_{\X_\h}^2 \right)^{\theta} \le C \h^{2\theta}
       \end{aligned}
 \end{equation*}
 which shows that $ \| \de(\Gamma(s_\liq)) -
 \sqrt{M_\liq(s^{n}_{\liq,\DE})} \de(\bar{p}(s_\liq))\|^2_{L^2(Q_T)} \to
 0$ as $\h \to 0$.  And from \eqref{est:discrete_pbar} in corollary \ref{cor:est},
 we deduce that there exists a constant $C>0$ where the following
 inequalities hold:
  \begin{align}\label{est:discrete_Gamma}
    \sn \dt  \norm{ \Gamma(s_{\liq,\h}^{n})}_{\X_\h}^2 \le C.
  \end{align}
 %
%  Remark that 
%  $$
%  	\int_{\Qt} \abs{  \Gamma(s_{\liq,\hdt}(t,x+y))   -  \Gamma(s_{\liq,\hdt}(t,x))  } \dd x \dd t \le \omega(y)
%  $$
%  for all $y \in \R^\dm$ and $\omega(\abs{y}) \tend 0$  when $\abs{y} \to 0$. (The prove is proportional to \eqref{e1trans}).
 Furthermore, by
  \eqref{conv:s_alpha}, we have
  $$
  \Gamma(s_{\liq,\hdt}) \tend \Gamma(s_{\liq}) \text{ in } L^2(\Qt) .
  $$
 % thus by \cite[Theorem 3.10]{Eymard:book} gives that $\Gamma(s_\liq) \in L^2(0,T;H_0^1(\Omega))$. 
 In the same manner of 
  \eqref{conv-weak:p}, we prove a weak convergence of $\nabla \Gamma(s_{\liq,\hdt}) $ (piecewise constant function is space and time) to
  $\nabla \Gamma(s_\liq)$.
 As consequence
  $\sqrt{M_\liq(s_{\liq,\hdt})}\nabla  \bar{p}(s_{\liq,\hdt})$ converges to
  $\nabla \Gamma (s_\liq)$ in $L^2(Q_T)$, and 
\begin{align}\label{lim:A2m}
   \cc_{2,2} & \tend  - \int_{0}^{T}\int_{\Omega}
  \rho_\liq(p_{\liq}) \sqrt{M_\liq(s_{\liq})} \nabla \Gamma(s_{\liq})\cdot
  \nabla \psi\dd x \dd t\\ & = - \int_{0}^{T}\int_{\Omega}
  \rho_\liq(p_{\liq}) M_\liq(s_{\liq}) \nabla \bar{p}(s_{\liq})\cdot \nabla
  \psi\dd x \dd t.
\end{align}
%%%%%%%%%%% fin     %%%%%%%%%%%%%%%%

 %\section{Numerical results}\label{sec:numerical}
 
%%%%%%%%%%%%%%%%%%%%%%%%%%%%%%

%\bibliographystyle{model1b-num-names}

\end{document}